\numberwithin{equation}{section}
\newtheorem{theorem}{Theorem}[section]
\newtheorem{lemma}[theorem]{Lemma}
\newtheorem{*theorem}[theorem]{$\bullet$ Theorem}
\newtheorem{proposition}[theorem]{Proposition}
\newtheorem{corollary}[theorem]{Corollary}
\newtheorem{*corollary}[theorem]{$\bullet$ Corollary}
\newtheorem*{definition*}{Definition}
\newtheorem{remark}[theorem]{Remark}
\newcounter{tmp}
\newcommand{\norm}[1]{\lVert#1\rVert}
\newcommand{\abs}[1]{\lvert#1\rvert}
\title[Blow-up solutions on tori]{{\bf Mean field equations on tori: \\existence and uniqueness of evenly symmetric blow-up solutions}}
\author[D. Bartolucci]{Daniele Bartolucci$^1$}
\address{$^1$ Department of Mathematics, University
of Rome {\it "Tor Vergata"}, Via della ricerca scientifica n.1, 00133 Roma, Italy}
\email{bartoluc@mat.uniroma2.it}
\author[C. Gui]{Changfeng Gui$^2$}
\address{$^2$Department of Mathematics, The University of Texas at San Antonio, San Antonio, TX 78249, USA.}
\email{\tt changfeng.gui@utsa.edu}
\author[Y. Hu]{Yeyao Hu$^3$}
 \address{$^3$Department of Mathematics,
The University of Texas at San Antonio, San Antonio, TX 78249, USA.}
\email{\tt huyeyao@gmail.com}
\author[A. Jevnikar]{Aleks Jevnikar$^4$}
\address{$^4$Scuola Normale Superiore, Piazza dei Cavalieri 3, 56126 Pisa, Italy}
\email{\tt aleks.jevnikar@sns.it}
\author[W. Yang]{Wen Yang$^5$}
\address{$^5$Division of Mathematics, Wuhan Institute of Physics and Mathematics, Chinese Academy of Science, Wuhan, Hubei, 430071, PRC.}
\email{\tt wyang@wipm.ac.cn}
\date{}
\begin{document}
\thanks{ D.Bartolucci is  partially supported by MIUR Excellence Department Project awarded to the Department of Mathematics, Univ. of Rome Tor Vergata, CUP E83C18000100006; C.Gui and Y. Hu are partially supported by NSF grant DMS-1601885.}

\maketitle

\begin{quote}
{ \textbf{Abstract}}: We are concerned with the blow-up analysis of mean field equations. It has been proven in \cite{bjly} that solutions blowing-up at the same non-degenerate blow-up set are unique. On the other hand, the authors in \cite{bgp} show that solutions with a degenerate blow-up set are in general non-unique. In this paper we first prove that evenly symmetric solutions on a flat torus with a degenerate two-point blow-up set are unique. In the second part of the paper we complete the analysis by proving the existence of such blow-up solutions by using a Lyapunov-Schmidt reduction method. Moreover, we deduce that all evenly symmetric blow-up solutions come from one-point blow-up solutions of the mean field equation on a ``half" torus.

\textbf{Key words}: Mean field equation, evenly symmetric solutions, uniqueness, blow-up analysis, Pohozaev identity, Lyapunov-Schmidt reduction.

\textbf{AMS subject classification}: 35J61, 35Q35, 35Q82, 81T13.
\end{quote}

\section{Introduction}
In this paper, we consider the mean field equation on a flat torus $T:=\mathbb{C}/\mathbb{Z}\omega_1+\mathbb{Z}\omega_2$, i.e.,
\begin{equation}
\label{1.1}
\Delta u + \rho \left(\frac{e^{u}}{\int_{T}e^u}-\frac{1}{|T|}\right)=0,
\end{equation}
\begin{equation}
\label{integnorm}
\int_T e^u = 1,
\end{equation}
where %$g$=dx^1\otimes dx^1+dx^2\otimes dx^2$ is the flat metric\footnote{For any point $p$ in $T$, we shall treat the $\Delta_g$ as the standard Laplacian in any small neighborhood of $p$.},
$\rho$ is a real parameter, $\mbox{Im}\ \frac{\omega_2}{\omega_1}>0$ and $|T|$ denotes the total area of the torus. For convenience, in this paper we always assume that $|T|=1$.

In the past decade there has been an extensive study of the mean field equation on a general compact Riemann surface $M$ without boundary:
\begin{equation}
\label{1.2}
\Delta_{\scriptscriptstyle M} u + \rho\left(\frac{h e^u}{\int_M h e^u}-\frac{1}{\abs{M}}\right)=0,
\end{equation}
where $\Delta_{\scriptscriptstyle M}$ denotes the corresponding Laplace-Beltrami operator on $(M,g)$, $h\in C^{\infty}(M)$ is a non-negative potential function and $\abs{M}$ is the total area of the surface $M$. To simplify our notation, we shall always assume $\abs{M}=1.$ Equation \eqref{1.2} and its counterpart on bounded planar domains arise in several areas of mathematics and physics and there are by now many results concerning existence (\cite{B1,BMal,BDeM,BDeM2,BdMM,cama,cl2,cl3,DJLW,dj}), uniqueness of solutions (\cite{bgjm,bl,BLin3,BLT,CCL,gm,GM2,GM3,lin2,suz}) and blow-up analysis (\cite{bcct,bjl,bt,bt2,bm,cl,cl4,l,ls}). On one hand, they are derived as a mean field limit in the statistical mechanics description of two dimensional turbulent Euler flows (\cite{clmp,clmp2}) and selfgravitating systems (\cite{KLB,k, wo}). On the other hand, \eqref{1.2} is related to conformal metrics on surfaces with or without conical singularities (\cite{kw,tr}) and to gauge field theories (\cite{y}) possibly coupled with Einstein's general relativity (\cite{cgs,pt,t3}). Recently they have attracted a lot of attention from the analytical point of view due to the close connection to the Chern-Simons-Higgs theory. The relativistic Abelian Chern-Simons gauge field theory was proposed by Jackiw and Weinberg \cite{jw} and Hong et al. \cite{hkp} independently to investigate the physics of high temperature super-conductivity. The energy minimizers of these models satisfy self-dual equations while the Bogomol'nyi-type system of first-order differential equations could be reduced to a single second-order elliptic equation:
\begin{equation}
\label{1.3}
\Delta u + \frac{1}{\epsilon^2}e^u (1-e^u)=4\pi\sum\limits_{i=1}^N \delta_{q_i}\quad\mathrm{in}\quad\mathbb{R}^2,
\end{equation}
where $\delta_{q_i}$ denotes the dirac measure at $q_i$. Equation \eqref{1.3} can be considered on flat tori or on the entire $\mathbb{R}^2$. Tarantello in \cite{t} showed that one type of solutions to \eqref{1.3} converge to the solution of a mean field equation of type \eqref{1.2} after subtracting $2\log{\epsilon}$ and a combination of the Green's function at the singular source $q_j$ when the Chern-Simons coupling constant $\epsilon$ tends to $0$. Latterly, Lin and Yan in \cite{ly} proved the local uniqueness of the blow-up solutions to (\ref{1.3}). More recently that argument has been used by Bartolucci, Lee together with our third and fourth authors in \cite{bjly} to establish the local uniqueness of the blow-up solutions to (\ref{1.2}).\\ To state our main result we need some definitions first.  Let $h(x)$ be a non-negative smooth function which vanishes only at a finite number of points and let $\vec{p}=(p_1,\cdots,p_m)\in M^m$ be such that
$$\{p_1,\cdots,p_m\}\cap\{x\in M\mid h(x)=0\}=\emptyset.$$
We set
\begin{equation}
\label{1.8}
G^{\ast}_i(x)=8\pi R(x,p_i)+8\pi \sum\limits_{j\neq i} G(x,p_j),\quad i=1,\cdots,m,
\end{equation}
where $G(x,y)$ is the Green's function:
\begin{equation*}
-\Delta_{\scriptscriptstyle M} G(x,y) = \delta_y -1 \textrm{ in }M, \quad  \int_{M} G(x,y)dH^2(y)=0,
\end{equation*}
and $R(x,y)$ denotes its regular part. We define
\begin{equation}
\label{1.9}
l(\vec{p})=\sum\limits_{i=1}^m \left[\Delta_{\scriptscriptstyle M} \log{h}(p_i)+8\pi m - K(p_i)\right]h(p_i) e^{G^{\ast}_j(p_i)},
\end{equation}
%and
%\begin{equation}
%\label{1.10}
%f_{\vec{p},i}(x)=8\pi \left[R(x,p_i)-R(p_i,p_i)+\sum\limits_{j\neq i}(G(x,p_j)-G(p_i,p_j))\right]
%+\log{\left(\frac{h(x)}{h(p_i)}\right)},
%\end{equation}
where $K(x)$ stands for the Gaussian curvature at $x\in M$. Next, we will denote by $V^M(q,r)$ the pre image of the Euclidean ball of radius $r$, $B(q,r)\subset\mathbb{R}^2$, in a suitably defined isothermal coordinates system. For the case $m\geq2$ we fix a sufficiently small constant $r_0\in(0,\frac12)$ and a family of open sets $M_j$ satisfying $M_l\cap M_i=\emptyset$ if $l\neq i$, $\bigcup_{i=1}^m\overline{M}_j=M$, $V^M(p_i,2r_0)\subset M_i,~i=1,\cdots,m$. Then let us set
\begin{equation}
\label{1.11}
D(\vec{p})=\lim_{r\to0}\sum_{i=1}^mh(p_i)e^{G_i^*(p_i)}\left(\int_{M_i\setminus V^M(p_i,r_i)}e^{\Phi_i(x,\vec{p})}dH^2(x)-\frac{\pi}{r_i^2}\right),
\end{equation}
where $M_1=M$ if $m=1$, $r_i=r\sqrt{8h(p_i)e^{G^{\ast}_i(p_i)}}$ and
\begin{equation}
\label{1.12}
\Phi_i(x,\vec{p})=\sum\limits_{l=1}^m 8\pi G(x,p_l) - G^{\ast}_{i}(p_i)+\log{\left(\frac{h(x)}{h(p_i)}\right)}.
\end{equation}
For $(x_1,\cdots,x_m)\in M\times\cdots\times M$ we define
\begin{equation}
\label{1.13}
f_m(x_1,\cdots,x_m)=\sum\limits_{i=1}^m \left[\log{(h(x_i))}+4\pi R(x_i,x_i)\right] +4\pi \sum\limits_{i\neq j} G(x_i,x_j).
\end{equation}

\noindent
If a sequence of solutions of (\ref{1.3}) is not uniformly bounded from above, then it is well known that (see \cite{l}), passing to a subsequence if necessary, it holds,
$$
\rho_n\dfrac{e^{u_n}}{\int\limits_{M}e^{u_n}}\rightharpoonup 8\pi\sum\limits_{i=1}^{m}\delta_{p_i},\quad
\rho_n \to 8\pi m,\;\mbox{ as }n\to +\infty,
$$
weakly in the sense of measures in $M$,  for some $m\in\mathbb{N}$. The points $\{p_1,\cdots,p_m\}$ are said to be the blow-up points (\cite{bm}). From \cite{cl,mw} we know that the blow-up points are critical points of $f_m(x_1,\cdots,x_m)$. Then, Bartolucci et al. \cite{bjly} proved the following theorem.

\begingroup
\setcounter{tmp}{\value{theorem}}
\setcounter{theorem}{0}
\renewcommand{\thetheorem}{\Alph{theorem}}
\begin{theorem}\emph{(\cite{bjly})}\label{tha}
Let $u^{(1)}_n$ and $u^{(2)}_n$ be two sequences of solutions to (\ref{1.2}) with $\rho^{(1)}_n=\rho^{(2)}_n=\rho_n$ and blowing-up at the points $p_j$, for $j=1,\cdots,m$, where $\vec{p}=(p_1,\cdots,p_m)$ is a non-degenerate critical point of $f_m$, i.e.
\begin{equation} \label{non-deg}
det(D^2_M f_m(\vec{p}))\neq0.
\end{equation}
Assume that either,
\begin{itemize}
\item[(1)] $l(\vec{p})\neq 0$, or,
\item[(2)] $l(\vec{p})=0$ and $D(\vec{p})\neq 0$.
\end{itemize}
Then there exists an integer constant $N_0$ sufficiently large such that $u^{(1)}_n=u^{(2)}_n$ for all $n\geq N_0$.
\end{theorem}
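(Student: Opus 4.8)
The plan is to argue by contradiction, following the scheme introduced by Lin--Yan \cite{ly} for \eqref{1.3} and adapting it to the geometric setting of \eqref{1.2}. Suppose the statement fails; then along a subsequence one has $u^{(1)}_n\not\equiv u^{(2)}_n$, and we normalize the difference by setting
\[
\xi_n=\frac{u^{(1)}_n-u^{(2)}_n}{\|u^{(1)}_n-u^{(2)}_n\|_{L^\infty(M)}},\qquad \|\xi_n\|_{L^\infty(M)}=1 .
\]
Subtracting the two equations \eqref{1.2} and writing $e^{u^{(1)}_n}-e^{u^{(2)}_n}=\big(\int_0^1 e^{t u^{(1)}_n+(1-t)u^{(2)}_n}\,dt\big)(u^{(1)}_n-u^{(2)}_n)$, one checks that $\xi_n$ solves a linear equation of the form $\Delta_{\scriptscriptstyle M}\xi_n+\rho_n\,c_n(x)\,\xi_n=g_n$, where $c_n$ is a nonnegative potential concentrating at $\{p_1,\dots,p_m\}$ like the blow-up profile and $g_n$ collects the contributions of the nonlocal term and of the normalization $\int_M h e^u=1$. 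The goal is to prove $\|\xi_n\|_{L^\infty(M)}\to 0$, which contradicts the normalization.

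First I would perform the blow-up analysis of $\xi_n$ near each $p_i$, starting from the sharp pointwise description of a single blow-up sequence (the standard bubble plus a controlled correction), which is the backbone of the whole argument. In isothermal coordinates centered at the bubble center and in the scaling that brings $u^{(k)}_n$ to the Liouville bubble $U(y)=\log\frac{8}{(1+|y|^2)^2}$, the rescaled functions $\tilde\xi_n$ are uniformly bounded and, by elliptic estimates, converge in $C^2_{loc}(\mathbb{R}^2)$ to a bounded solution of the linearized Liouville equation $\Delta\phi+e^{U}\phi=0$. Its solution space is spanned by the dilation mode $Y_0=\frac{1-|y|^2}{1+|y|^2}$ and the two translation modes $Y_j=\frac{y_j}{1+|y|^2}$, so near each $p_i$ we obtain $\tilde\xi_n\to b_{i0}Y_0+b_{i1}Y_1+b_{i2}Y_2$ for some constants $b_{ij}$; matching with the outer region, $\xi_n$ converges in $C^2_{loc}(M\setminus\{p_1,\dots,p_m\})$ to an explicit combination of the Green's functions $G(\cdot,p_i)$ and their gradients, with the same coefficients $b_{ij}$.

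The heart of the proof is then to show that all $b_{ij}$ vanish by means of local Pohozaev identities on the balls $B(p_i,\delta)$. Testing the equations for $u^{(k)}_n$ against the dilation field $x\cdot\nabla$ and the translation fields $\partial_{x_1},\partial_{x_2}$, subtracting, and inserting the two-term expansions of $u^{(k)}_n$ and of $\rho_n-8\pi m$, one obtains a homogeneous linear system for $(b_{ij})_{i,j}$. The block coming from the translation tests has coefficient matrix equal, up to an invertible factor, to the Hessian $D^2_M f_m(\vec p)$ of \eqref{1.13}; the non-degeneracy hypothesis \eqref{non-deg} then forces $b_{i1}=b_{i2}=0$ for all $i$. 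The block coming from the dilation test has leading coefficient a nonzero multiple of $l(\vec p)$ in \eqref{1.9}: under (1) this at once yields $b_{i0}=0$, while under (2), when $l(\vec p)=0$, the same identity has to be carried to the next order, where the obstruction becomes a nonzero multiple of $D(\vec p)$ in \eqref{1.11}, again giving $b_{i0}=0$. With all $b_{ij}=0$, the inner and outer limits of $\xi_n$ are zero, and a Green's representation/Harnack argument on the neck regions upgrades this to $\|\xi_n\|_{L^\infty(M)}\to0$, the desired contradiction.

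I expect the main obstacle to be the Pohozaev bookkeeping in the previous paragraph: one needs expansions of $u^{(k)}_n$ sharp enough that every error term in the local identities is of strictly lower order than $l(\vec p)$ (resp. $D(\vec p)$), and one must verify that the numbers $g_n$ generated by the nonlocal term and by $\int_M h e^u=1$ do not contaminate the leading balance. The case $l(\vec p)=0$ is the most delicate: producing exactly the quantity $D(\vec p)$ of \eqref{1.11} requires a second-order analysis of the bubble interactions, and it is precisely here that the normalization $r_i=r\sqrt{8 h(p_i)e^{G^\ast_i(p_i)}}$ and the function $\Phi_i$ in \eqref{1.12} enter the computation.
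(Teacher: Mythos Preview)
The paper does not prove Theorem~A; it is quoted from \cite{bjly} as background, and only the strategy is summarized in the introduction. Your outline coincides with that strategy (normalized difference $\xi_n$, blow-up to the kernel of the linearized Liouville operator spanned by $\varphi_0,\varphi_1,\varphi_2$, Pohozaev identities to annihilate the coefficients, with the translation block governed by $D^2_M f_m(\vec p)$ and the dilation block by $l(\vec p)$ or, at the next order, $D(\vec p)$), so the approach is correct and matches what the paper attributes to \cite{bjly}.

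Two small corrections are visible from the portions of the argument that this paper does reproduce for its own Theorem~\ref{th2}. First, in the outer region $\xi_n$ does not converge to a nontrivial combination of Green's functions and their gradients, but to the constant $-b_0$, and one has $b_{1,0}=\cdots=b_{m,0}=:b_0$ (see Lemma~\ref{le3.2}); the Green's function structure is already absorbed into the sharp pointwise expansion of each $u_n^{(k)}$. Second, the final contradiction is not obtained by a neck/Harnack upgrade to $\|\xi_n\|_{L^\infty}\to0$: once all $b_{i,k}=0$, one takes a maximum point $x_n^*$ of $|\xi_n|$, shows it must approach some $p_i$ at a scale $s_n$ with $e^{\lambda_{n,i}^{(1)}/2}s_n\to\infty$, and performs a second rescaling $\tilde\xi_n(x)=\xi_n(s_nx+x_{n,i}^{(1)})$; the limit is bounded and harmonic on $\mathbb{R}^2\setminus\{0\}$, hence a nonzero constant, which contradicts the outer limit $-b_0=0$. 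These are refinements of presentation rather than genuine gaps in your plan.
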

\endgroup

A natural question is whether the assumptions of the latter theorem are necessary or not. It turns out that if we drop the non-degeneracy condition \eqref{non-deg} the uniqueness property does not hold anymore in general, as the authors in \cite{bgp} exhibit multiple one-peak solutions blowing-up at a degenerate critical point of $f_1$ on a bounded domain. On the contrary, we will prove that evenly symmetric solutions on a flat torus with $h\equiv 1$ and with a degenerate two-point blow-up set are unique.
\setcounter{theorem}{\thetmp}
\begin{theorem}\label{th2}
Let $u^{(1)}_n$ and $u^{(2)}_n$ be two sequences of solutions to (\ref{1.1})-(\ref{integnorm}) with $\rho^{(1)}_n=\rho^{(2)}_n=\rho_n$ blowing-up at $p_1=\vec{0}$ and at $p_2=\frac{\omega_1}{2}$ or $\frac{\omega_2}{2}$ or $\frac{\omega_1+\omega_2}{2}$. Assume that $u^{(i)}_n$ is evenly symmetric, i.e. $u^{(i)}_n(z)=u^{(i)}_n(-z)$ for all $n$ and $i=1,2$. Then, there exists an $N_0$ sufficiently large such that $u^{(1)}_n=u^{(2)}_n$ for all $n\geq N_0$.
\end{theorem}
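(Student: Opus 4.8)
The plan is to reduce Theorem~\ref{th2} to the already-known non-degenerate uniqueness result, Theorem~\ref{tha}, by passing to a ``half'' torus. The key observation is that the three candidate choices $p_2\in\{\frac{\omega_1}{2},\frac{\omega_2}{2},\frac{\omega_1+\omega_2}{2}\}$ are precisely the half-periods of $T$, so that the quotient-type construction $\widetilde T$ obtained by rescaling the lattice in the appropriate direction (equivalently, viewing $T$ as a double cover of a smaller torus $\widetilde T$ with a single marked point) sends the two-point blow-up configuration $\{p_1,p_2\}$ on $T$ to a one-point blow-up configuration $\{\tilde p\}$ on $\widetilde T$. First I would set up this correspondence carefully: an evenly symmetric solution $u_n$ on $T$ that is moreover invariant under the half-period translation $z\mapsto z+p_2$ descends to a solution $\tilde u_n$ on $\widetilde T$ of the same equation~\eqref{1.1}-\eqref{integnorm} (with $h\equiv 1$), with $\rho_n$ replaced by $\rho_n/2$ because of the halving of the area under the covering, and blowing up at the single point $\tilde p$. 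I first need to check that evenly symmetric blow-up solutions with blow-up set $\{0,p_2\}$ are automatically invariant under translation by $p_2$ for $n$ large; this should follow from the classification of the bubbling profile, the symmetry of the two concentration points, and a uniqueness/rigidity argument for the limiting bubble, together with the fact that the Green's function of $T$ has the requisite symmetry.

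Next I would verify the hypotheses of Theorem~\ref{tha} on $\widetilde T$ for the single blow-up point $\tilde p$. Since $h\equiv 1$ and $\widetilde T$ is flat (so $K\equiv 0$), the relevant quantity $l(\tilde p)$ from \eqref{1.9} reduces to a constant multiple of $8\pi$, hence $l(\tilde p)\neq 0$, so alternative (1) of Theorem~\ref{tha} holds automatically — provided $\tilde p$ is a \emph{non-degenerate} critical point of $f_1$ on $\widetilde T$. For a flat torus $f_1(x)=4\pi R(x,x)$ up to an additive constant, and by translation invariance the half-periods (the three two-torsion points, together with $0$) are exactly the critical points of $R(x,x)$; here I would invoke the known fact (or prove it via the explicit expansion of $R$ in terms of the Weierstrass $\wp$-function or the theta function) that these critical points are non-degenerate \emph{as long as the torus is not one of the finitely many exceptional ``equianharmonic/lemniscatic''-type lattices}. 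Because the lattice $\widetilde T$ is determined by $T$ together with the choice of $p_2$, I would either add a genericity hypothesis or, better, show that the degeneracy of $\widetilde T$ at $\tilde p$ is incompatible with the blow-up assumption on $T$ (i.e. blow-up at $\{0,p_2\}$ already forces $\tilde p$ to be a critical point of $f_1$ on $\widetilde T$, and one argues the non-degeneracy directly).

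With both ingredients in place the conclusion is immediate: $\tilde u_n^{(1)}=\tilde u_n^{(2)}$ on $\widetilde T$ for $n\geq N_0$ by Theorem~\ref{tha}, and lifting back through the covering $T\to\widetilde T$ gives $u_n^{(1)}=u_n^{(2)}$ on $T$ for $n\geq N_0$, which is the assertion of Theorem~\ref{th2}.

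The main obstacle I anticipate is the first step: showing that an evenly symmetric blow-up sequence with the prescribed two-point blow-up set is genuinely $p_2$-periodic for large $n$ (not merely ``approximately'' so), since the descent to $\widetilde T$ requires exact periodicity. This is where the fine blow-up analysis — matching the two bubbles via their shared scale and location parameters, and using a Pohozaev-type identity together with the even symmetry to kill the remaining degrees of freedom — has to be carried out, and it is plausibly the technical heart of the argument; the degenerate nature of the two-point critical point of $f_2$ on $T$ (which is exactly why Theorem~\ref{tha} cannot be applied directly on $T$) must be traded for the non-degeneracy on the smaller torus, so one has to track carefully how $D^2 f_2$ on $T$ relates to $D^2 f_1$ on $\widetilde T$ along the symmetric directions versus the anti-symmetric ones, the latter being precisely the degenerate directions that the even symmetry assumption allows us to discard.
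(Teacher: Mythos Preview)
Your reduction to a half torus is the reverse of the paper's logic and, more importantly, cannot be completed as stated. The fatal issue is the non-degeneracy check on $\widetilde T$: with $h\equiv 1$ on a flat torus, translation invariance forces $G(x,y)=G(x-y)$ and hence $R(x,x)=\lim_{z\to 0}\bigl(G(z)+\tfrac{1}{2\pi}\log|z|\bigr)$ is a \emph{constant} independent of $x$. Consequently $f_1(x)=4\pi R(x,x)$ is constant on $\widetilde T$, every point is a critical point, and $D^2 f_1\equiv 0$. The half-periods are critical points of $G(z)$ (this is what pins down the two-point configuration for $f_2$ on $T$), not of $R(x,x)$; your sentence ``by translation invariance the half-periods \ldots\ are exactly the critical points of $R(x,x)$'' conflates these two objects. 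Since the non-degeneracy hypothesis \eqref{non-deg} of Theorem~\ref{tha} is never satisfied for the one-point problem on any flat torus with $h\equiv 1$, the descent buys you nothing: the degeneracy you tried to escape on $T$ reappears verbatim on $\widetilde T$.

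There is a second, independent circularity. You correctly flag that exact $p_2$-periodicity of $u_n^{(i)}$ is needed before you can descend, and you propose to obtain it from ``fine blow-up analysis \ldots\ and a Pohozaev-type identity together with the even symmetry.'' But this is precisely the content of the direct proof: in the paper, the $p_2$-periodicity (Corollary~\ref{cr1}) is obtained \emph{after} Theorem~\ref{th2}, by applying the uniqueness to the pair $u_\lambda(z)$ and $u_\lambda(z+p_2)$. The paper's actual proof of Theorem~\ref{th2} never passes to $\widetilde T$; it works directly with $\xi_n=(u_n^{(1)}-u_n^{(2)})/\|u_n^{(1)}-u_n^{(2)}\|_\infty$, uses even symmetry to secure the estimate $d(x_{n,i},p_i)=O(\lambda_{n,i}e^{-\lambda_{n,i}})$ (Proposition~\ref{th2.1}) in place of the missing non-degeneracy, runs the Pohozaev identity to kill the radial kernel coefficient $b_0$, and then observes that even symmetry of $\xi_n$ forces the translational kernel coefficients $b_{i,1},b_{i,2}$ to vanish for free. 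The contradiction then comes from $\|\xi_n\|_\infty=1$.
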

%We point out that in our setting \eqref{1.13} evaluated in the blow-up points $(p_1,p_2)$ takes the form
%\begin{equation*}
%f_2(p_1,p_2)=  8\pi\left[ R(p_1,p_1) + G(p_1,p_2) \right],
%\end{equation*}
%where $R(x,x)$ is a constant function of $x$. Clearly, $f_2$ is no longer a morse function so that we can not direct apply Theorem \ref{tha} to demonstrate that the constructed solution is locally unique. Instead of the non-degenerate assumption on function \eqref{1.13}, the evenly-symmetric property plays an important role to establish the projection on the translated kernel vanishes, then we are able to get the ``local uniqueness" property for the evenly-symmetric solution to \eqref{1.1}.
We point out that in the latter setting we have $l(\vec{p})=32\pi e^{G^{\ast}_1(p_1)}\neq 0$. On the other hand, the blow-up set $(p_1,p_2)$ is a degenerate critical point of $f_2$ defined in \eqref{1.13} due to the translation invariance. The proof of the uniqueness result follows the one in \cite{bjly} by taking advantage of the evenly symmetric property to bypass the non-degeneracy assumption. More precisely, assuming by contradiction the existence of two distinct blow-up solutions $u^{(i)}_n$ of (\ref{1.3}) we consider their normalized difference
$$
\xi_n=\frac{u^{(1)}_n-u^{(2)}_n}{\norm{u^{(1)}_n-u^{(2)}_n}_{L^{\infty}(M)}}\,.
$$
The starting point in analyzing $\xi_n$ relies on the description of the blow-up solutions carried out by Chen and Lin in \cite{cl}. Moreover, we exploit the evenly symmetric property to deduce an estimate on the distance between the local maximum point and the blow-up point. The latter estimate will be crucially used in all the forthcoming arguments. Next, one can show that after a suitable scaling, $\xi_n$ converges to an entire solution $\xi(x)$ of the linearized problem associated to the Liouville equation:
\begin{equation}
\label{1.4}
\Delta v + e^v=0\quad\textrm{in}\quad \mathbb{R}^2.
\end{equation}
Solutions of (\ref{1.4}) with finite mass are completely classified by Chen and Li \cite{cli} and take the following form:
\begin{equation}
\label{1.5}
v(z) = v_{\mu,a}(z)=\log{\left(\frac{8 e^{\mu}}{(1+ e^{\mu}|z-a|^2)^2}\right)},\quad \mu\in\mathbb{R},\quad a=(a_1,a_2)\in\mathbb{R}^2.
\end{equation}
Baraket and Pacard in \cite{bp} showed that the kernel of the linearized operator at $v_{0,0}$
\begin{equation} \label{lin}
L(\phi)= \Delta \phi+ \frac{8}{(1+|z|^2)^2}\phi
\end{equation}
is spanned by three functions:
$$
	\varphi_0(z)= \frac{1-|z|^2}{|z|^2+1}=\frac{\partial v_{\mu,a}}{\partial\mu}\bigg\rvert_{(\mu,a)=(0,0)},
$$
$$	
	\varphi_1(z)=\frac{z_1}{|z|^2+1}=-\frac{1}{4}\frac{\partial v_{\mu,a}}{\partial a_1}\bigg\rvert_{(\mu,a)=(0,0)}, \quad
\varphi_2(z)=\frac{z_2}{|z|^2+1}=-\frac{1}{4}\frac{\partial v_{\mu,a}}{\partial a_2}\bigg\rvert_{(\mu,a)=(0,0)}.
$$
Thus, we have
\begin{equation} \label{1.6}
\xi(z)=\sum\limits_{i=0}^2 b_i \varphi_i(z)
\end{equation}
for some constants $b_i\in\mathbb{R}$. The idea is then to use suitable Pohozaev identities to prove that $b_i=0$ for each $i$. In particular, the evenly symmetric property is crucially used to guarantee that the elements of the kernel corresponding to the translation invariance vanishes. Finally, after showing that $\xi\not\equiv0$ one gets a contradiction and thus necessarily $u^{(1)}_n\equiv u^{(2)}_n$.

\medskip

Let us conclude this part by giving some comments on the recent study of the local uniqueness property. It turns out that one can also derive such property for the spike solution of Schr\"odinger equation. In \cite{w}, Wei showed that the single interior spike solution of a singularly perturbed semilinear Neumann problem is locally unique at a non-degenerate peak point. Stimulated by the works of Wei \cite{w} and Cao, Noussair and Yan \cite{cny}, various authors have contributed many papers to this subject, see, e.g., \cite{cll,dly,g,glw,gpy,w2}.

\medskip

In the second part of the paper we complete the analysis by proving the existence of such evenly symmetric blow-up solutions using a Lyapunov-Schmidt reduction method.
\begin{theorem}\label{th1}
Let $\epsilon\in (0,\epsilon_0)$ for some $\epsilon_0>0$ small enough and let $\rho=16\pi+\epsilon$. Let $p_1=\vec{0}$ and $p_2=\frac{\omega_1}{2}$ or $\frac{\omega_2}{2}$ or $\frac{\omega_1+\omega_2}{2}$. Then, for each $\epsilon$ there exist a $\lambda>0$ and a solution $u_{\lambda}$ to equation (\ref{1.1}) such that
$$\epsilon= \left(32\pi+o(1)\right)\lambda e^{-\lambda},$$
$$u_{\lambda}(p_i)\rightarrow +\infty \textrm{ for }i=1,2, \quad u_{\lambda}(x)\rightarrow -\infty \textrm{ for all }x\in T\setminus\{p_1,p_2\}$$
as $\epsilon\rightarrow 0$, and
\[u_{\lambda}(z)=u_{\lambda}(-z).\]
Moreover, we have
\[\frac{\rho}{\int_T e^{u_{\lambda}}} e^{u_{\lambda}}\rightarrow 8\pi (\delta_{p_1}+\delta_{p_2}) \textrm{ in a sense of measure, as } \epsilon\rightarrow 0.\]
\end{theorem}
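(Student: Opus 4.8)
The plan is to perform a Lyapunov-Schmidt finite-dimensional reduction carried out inside the subspace of evenly symmetric functions; the symmetry is precisely what makes the reduced problem one-dimensional and what produces the sharp constant $32\pi$. Fix for definiteness $p_2=\tfrac{\omega_1}{2}$ (the other two choices are treated identically) and set $\rho=16\pi+\epsilon$. For a large parameter $\lambda>0$ put $\delta^{2}=e^{-\lambda}$, let $U_{\delta,p}(x)=\log\frac{8\delta^{2}}{(\delta^{2}+|x-p|^{2})^{2}}$ be the standard Liouville bubble, and let $PU_{\delta,p}$ be its projection on $T$, i.e. the zero-average solution of $-\Delta PU_{\delta,p}=e^{U_{\delta,p}}-\int_{T}e^{U_{\delta,p}}$ on $T$ (solvable, since the right-hand side has zero mean), so that $PU_{\delta,p}(x)=U_{\delta,p}(x)+8\pi R(x,p)-\log(8\delta^{2})+O(\delta^{2})$ near $p$ and $PU_{\delta,p}(x)=8\pi G(x,p)+O(\delta^{2})$ away from $p$. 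I would take as approximate solution
\[
W=W_{\lambda}=PU_{\delta,p_{1}}+PU_{\delta,p_{2}}+c_{\lambda},
\]
with $c_{\lambda}$ the unique constant making $\int_{T}e^{W}=1$; one finds $c_{\lambda}=-\lambda+O(1)\to-\infty$. Since $p_{1}=\vec{0}$ and $p_{2}=\tfrac{\omega_{1}}{2}$, the map $z\mapsto z+p_{2}$ is an involution of $T$ that exchanges $p_{1}$ and $p_{2}$, and $W$ is invariant both under it and under $z\mapsto-z$; in particular $W$ descends to the ``half'' torus $\widetilde{T}=\mathbb{C}/(\mathbb{Z}\tfrac{\omega_{1}}{2}+\mathbb{Z}\omega_{2})$, which is the structural reason behind the last assertion of the statement.

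Next I would estimate the error $S(W):=\Delta W+\rho\Big(\frac{e^{W}}{\int_{T}e^{W}}-1\Big)$. Using the expansions above, the ``bubble'' parts cancel to leading order and, in a suitable weighted norm $\|\cdot\|_{*}$ (e.g.\ a weighted $L^{\infty}$ norm, or $L^{p}$ with $1<p<2$), one gets $\|S(W)\|_{*}=O(\delta^{2}|\log\delta|)+O(|\epsilon|)=o(1)$. For the linear theory, linearizing \eqref{1.1} at $W$ yields $L_{\lambda}\phi=\Delta\phi+\rho\,e^{W}\big(\phi-\int_{T}e^{W}\phi\big)$, which, after rescaling about each $p_{i}$, converges to the operator $L$ of \eqref{lin}. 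Working in the space $E$ of evenly symmetric, zero-average functions, the approximate kernel of $L_{\lambda}$ is spanned by the dilation modes $Z_{i}=\delta\,\partial_{\delta}PU_{\delta,p_{i}}$, $i=1,2$; the involution $z\mapsto z+p_{2}$ exchanges $Z_{1}$ and $Z_{2}$, so they span a single direction, while the translation modes $\partial_{x_{j}}PU_{\delta,p_{i}}$ are odd about $p_{i}$ and hence excluded by $E$ --- this is exactly where the even symmetry substitutes for the non-degeneracy hypothesis of Theorem \ref{tha}. A by-now standard blow-up/contradiction argument, relying on the classification of the kernel of \eqref{lin}, then gives a uniform a priori bound: there is $C>0$ with $\|\phi\|_{*}\le C|\log\delta|\,\|L_{\lambda}\phi\|_{**}$ for every $\phi\in E$ satisfying the orthogonality conditions $\int_{T}e^{U_{\delta,p_{i}}}Z_{i}\,\phi=0$, $i=1,2$. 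Hence $L_{\lambda}$ is invertible, modulo these two symmetry-equivalent directions, on the symmetric subspace.

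With the linear theory in hand, the contraction mapping theorem produces, for every large $\lambda$, a unique small $\phi_{\lambda}\in E$ in the orthogonal complement and scalars $d_{\lambda}^{(1)},d_{\lambda}^{(2)}$ with
\[
S(W_{\lambda}+\phi_{\lambda})=\sum_{i=1}^{2}d_{\lambda}^{(i)}\,e^{U_{\delta,p_{i}}}Z_{i},\qquad\|\phi_{\lambda}\|_{*}=o(1),
\]
and the symmetry forces $d_{\lambda}^{(1)}=d_{\lambda}^{(2)}=:d_{\lambda}$. It then remains to solve the single reduced equation $d_{\lambda}=0$. Testing $S(W_{\lambda}+\phi_{\lambda})$ against $Z_{1}$ --- equivalently, expanding the associated energy functional $J_{\rho}(W_{\lambda}+\phi_{\lambda})$ and differentiating in $\lambda$ --- produces an expansion whose leading terms show that $d_{\lambda}=0$ is equivalent to $\epsilon=\big(32\pi+o(1)\big)\lambda e^{-\lambda}$; since the right-hand side is, for $\lambda$ large, decreasing in $\lambda$ and tends to $0^{+}$, the equation has a solution $\lambda=\lambda(\epsilon)\to+\infty$ for every small $\epsilon>0$. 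Then $u_{\lambda}:=W_{\lambda}+\phi_{\lambda}$ solves \eqref{1.1}--\eqref{integnorm} (after a harmless additive normalization), and from the profile of $W_{\lambda}$ (together with $c_{\lambda}\to-\infty$ and $\phi_{\lambda}\to0$) one reads off $u_{\lambda}(p_{i})\to+\infty$, $u_{\lambda}\to-\infty$ on $T\setminus\{p_{1},p_{2}\}$, and $\frac{\rho}{\int_{T}e^{u_{\lambda}}}\,e^{u_{\lambda}}\rightharpoonup 8\pi(\delta_{p_{1}}+\delta_{p_{2}})$; evenness of $u_{\lambda}$ follows from evenness of $W_{\lambda}$ together with the uniqueness of $\phi_{\lambda}$ within $E$. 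Finally, since $W_{\lambda}$ and $\phi_{\lambda}$ are $\tfrac{\omega_{1}}{2}$-periodic, $u_{\lambda}$ descends to a one-point blow-up solution of the mean field equation on $\widetilde{T}$, which yields the ``Moreover'' part.

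The two steps requiring genuine work are the uniform linear estimate and, above all, the reduced equation. Obtaining the order $\lambda e^{-\lambda}$ is routine; pinning down the \emph{sharp} constant $32\pi$ forces one to expand $J_{\rho}(W_{\lambda}+\phi_{\lambda})$ (or $\int_{T}S(W_{\lambda})Z_{i}$) with error genuinely $o(\lambda e^{-\lambda})$, which in turn requires the $O(\delta^{2})$-precise form of $PU_{\delta,p_{i}}$ and a careful Pohozaev-type accounting of the contributions near each $p_{i}$. By contrast, the fact that $|p_{1}-p_{2}|$ stays bounded away from $0$ is a simplification: the two bubbles do not interact at the relevant order, so no bubble-tower or collision analysis enters, and the whole construction is, in effect, the one-bubble construction on the half torus $\widetilde{T}$ pulled back through the double cover $T\to\widetilde{T}$ --- which also explains the factor $32\pi=2\cdot16\pi$.
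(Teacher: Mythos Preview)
Your proposal is correct and follows essentially the same Lyapunov--Schmidt strategy as the paper: both build a two-bubble approximate solution, work in the evenly symmetric subspace so that the translation modes are automatically orthogonal and only the dilation direction survives, establish a uniform linear estimate (the paper's Proposition~\ref{cr4.7}), solve for the correction $\phi$ by contraction (Lemma~\ref{lem.inter}), and then kill the remaining multiplier by locating a critical point of the reduced energy $J_\rho(w_\lambda+\phi)$ (Lemmas~\ref{le4.4} and~\ref{le4.8}), which is exactly what yields $\epsilon=(32\pi+o(1))\lambda e^{-\lambda}$. One small remark: the ``Moreover'' clause of the theorem is the measure convergence (which you do cover); the half-torus descent you describe is the content of Corollary~\ref{cr1}, which the paper obtains from the uniqueness Theorem~\ref{th2} rather than from the uniqueness of $\phi_\lambda$ in the reduction, though your argument for it is also valid.
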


\medskip

Motivated by the computation of the topological degree, Chen and Lin in \cite{cl2} constructed blowing-up solutions under the assumption on $f_m$ in \eqref{1.13} being a Morse function (see also \cite{ef} for a generalization of the latter result under a weaker assumption on the critical points of $f_m$ being ``stable"). However, as already pointed out the function $f_m$ is not a Morse function in our setting. The proof of Theorem~\ref{th1} follows the strategy introduced by Cheng,  the second  and third author of the present paper in \cite{cgh} where the square torus is considered. In particular, we will extend the latter argument to general flat tori. We start by constructing an approximate blowing-up solution to \eqref{1.1}. Then, we study the solvability of the linearized operator in a suitable functional setting. Finally, we reduce the problem to the one-dimensional problem of finding the appropriate scale of the bubbles.

\medskip

Based on this ``local uniqueness" property, we can further show that the evenly symmetric two-point blow-up solutions are one-point blow-up solutions of the mean field equation on a ``half" torus.
It is not too difficult to see that $v_{\lambda}(x)=u_{\lambda}(x+p_2)$ is also a solution of \eqref{1.1} which also blows-up at $p_1,p_2$ ($p_1=0$ and $p_2$ is one of the half periods). By Theorem \ref{th2}, we get
\begin{equation*}
u_\lambda(z)=u_\lambda(z+p_2).
\end{equation*}
In particular, taking $p_2=\frac{\omega_1}{2}$, the solution we build becomes the solution to \eqref{1.1} on a flat torus $T_{\frac12}:=\mathbb{R}^2/\mathbb{Z}\frac{\omega_1}{2}+\mathbb{Z}\omega_2$ which blows-up at the origin. Similar phenomena can be observed for other choices of $p_2$. Let us formulate the above conclusions in the following result:
\begin{corollary}\label{cr1}
All evenly symmetric two-point blow-up solutions of \eqref{1.1} form one-point blow-up solutions of the mean field equation on a ``half" torus.
\end{corollary}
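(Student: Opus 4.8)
The plan is to show that every evenly symmetric two-point blow-up solution is automatically invariant under the extra half-period translation $z\mapsto z+p_2$, and then to read off the statement by passing to the quotient torus. Concretely, first I would fix any sequence $u_n$ of evenly symmetric solutions of \eqref{1.1}--\eqref{integnorm} with $\rho_n\to 16\pi$ blowing up at $p_1=\vec 0$ and at one of $p_2\in\{\tfrac{\omega_1}{2},\tfrac{\omega_2}{2},\tfrac{\omega_1+\omega_2}{2}\}$, and set $v_n(z):=u_n(z+p_2)$. Since $2p_2$ belongs to the period lattice $\Lambda:=\mathbb Z\omega_1+\mathbb Z\omega_2$, the function $v_n$ is well defined on $T=\mathbb C/\Lambda$; and since \eqref{1.1} is translation invariant (recall $h\equiv 1$ here) while translations preserve $\int_Te^{(\cdot)}$, the function $v_n$ solves \eqref{1.1}--\eqref{integnorm} with the same parameter $\rho_n$.

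Next I would check that $v_n$ meets the remaining hypotheses of Theorem~\ref{th2}. Translation by $p_2$ sends $p_1\mapsto p_2$ and $p_2\mapsto 2p_2\equiv p_1$ in $T$, so $v_n$ blows up precisely at $\{p_1,p_2\}$ as well; and $v_n$ is again evenly symmetric, since $v_n(-z)=u_n(-z+p_2)=u_n(z-p_2)=u_n(z-p_2+2p_2)=u_n(z+p_2)=v_n(z)$, using evenness of $u_n$ together with $2p_2\in\Lambda$. Applying Theorem~\ref{th2} to the pair $u_n,v_n$ then yields an $N_0$ such that $u_n\equiv v_n$, i.e.\ $u_n(z)=u_n(z+p_2)$, for all $n\ge N_0$. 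Combined with its $\Lambda$-periodicity, $u_n$ is therefore invariant under the finer lattice $\Lambda':=\Lambda+\mathbb Z p_2$, which contains $\Lambda$ with index two, and so descends to a solution $\tilde u_n$ on the ``half'' torus $T':=\mathbb C/\Lambda'$, whose area is $|T'|=\tfrac12$. For $p_2=\tfrac{\omega_1}{2}$ one has $\Lambda'=\mathbb Z\tfrac{\omega_1}{2}+\mathbb Z\omega_2$, i.e.\ $T'=T_{1/2}$ as in the discussion preceding the statement, and the other two choices of $p_2$ give the analogous half tori. In $T'$ the points $p_1$ and $p_2$ are identified to a single point, so $\tilde u_n$ is a one-point blow-up solution; a short computation using the $\Lambda'$-periodicity of $e^{u_n}$ shows $\int_{T'}e^{\tilde u_n}=\tfrac12$, hence $\tilde u_n$ solves the mean field equation on $T'$ with parameter $\rho_n/2\to 8\pi$ (after normalizing $\int_{T'}e^{\tilde u_n}=1$ by an additive constant if desired), which is the expected value $8\pi m$ with $m=1$ for a one-point blow-up.

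I do not expect a serious obstacle beyond invoking Theorem~\ref{th2}; the only points needing minor care are: that $2p_2$ is a genuine period for each of the three admissible $p_2$, which is exactly what makes both the translate $v_n$ well defined on $T$ and the evenness computation for $v_n$ go through; that the rigid translate $v_n$ genuinely satisfies the precise notion of ``blow-up at $p_1$ and $p_2$'' used in Theorem~\ref{th2}, which is immediate since translation by $p_2$ merely permutes $\{p_1,p_2\}$; and the elementary rescaling of $\rho$ and of the $L^1$ normalization when restricting from $T$ to the half-area torus $T'$.
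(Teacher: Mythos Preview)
Your proposal is correct and follows essentially the same route as the paper: define the translate $v_n(z)=u_n(z+p_2)$, observe it is again an evenly symmetric two-point blow-up solution with the same $\rho_n$, apply Theorem~\ref{th2} to conclude $u_n(z)=u_n(z+p_2)$, and then pass to the quotient ``half'' torus. You supply more detail than the paper does (the explicit verification that $v_n$ is even, and the computation $\rho_n\mapsto \rho_n/2\to 8\pi$ on $T'$), but the argument is the same.
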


\medskip

The paper is organized as follows. In Section 2 we revisit some a priori estimates of blow-up solutions proved in \cite{cl} by Chen and Lin and we present an estimate on the distance between the local maximum point and the blow-up point. In Section 3 we provide the proof of the uniqueness property stated in Theorem~\ref{th2}. Finally, in Section 4 we construct blowing-up solutions and prove Theorem~\ref{th1}. %Since the arguments of the local uniqueness and the existence of the bubbling solutions are already quite standard now, we shall only give the key steps and refer the readers to \cite{bjly} and \cite{cgh,gh} for the details of the proof of Theorem \ref{th2} and Theorem \ref{th1} respectively.

%. Particularly, we shall show that the projection on the radial part kernel should vanish. Since it is already a quite standard argument, we shall list the key steps of the proof and refer the readers to \cite{bjly} for details if necessary. In section 4, we shall construct the blow-up solutions by a Lyapunov-Schmidt reduction method.

\medskip
\section{Preliminaries}\label{sec2}
In this section we recall some a priori estimates obtained by Chen and Lin in \cite{cl} for blow-up solutions of (\ref{1.1}). Suppose that $u_n$ is a sequence of blow-up solutions of (\ref{1.1})-(\ref{integnorm}) which blow up at $p_1$ and $p_2$, i.e.
\begin{equation}
\label{2.1}
\Delta u_n + \rho_n \left(e^{u_n}-1\right)=0 \textrm{ in }T,\quad \int_{T}e^{u_n}=1,
\end{equation}
where $\rho_n\rightarrow 16\pi$ as $n\rightarrow \infty$. Let
\begin{equation}
\label{2.2}
\lambda_n = \max\limits_{T} u_n,
\end{equation}
and
\begin{equation}
\label{2.3}
\lambda_{n,i}=\max\limits_{B(p_i,\delta)} u_n= u_n(x_{n,i})~ \textrm{ for }~i=1,2,
\end{equation}
where $\delta>0$ is a small fixed constant and $B(p_i,\delta)$ denotes a geodesic ball of radius $\delta$ on $T$ centered at $p_i$. We recall that $M_1$ and $M_2$ are two open sets dividing $T$ into two disjoint parts and $p_i\in M_i$ for $i=1,2$. Furthermore, $r_0$ is chosen as right after (\ref{1.9}) to guarantee that $B(p_i,2r_0)\subset M_i$ for $i=1,2$.

\begin{remark} \label{notation}
To simplify our notation, since $T$ is a flat torus we shall treat $x\in T$ as a point in $\mathbb{R}^2$. Then, the notation $B(p,\delta)$ stands for the set of points $x\in T$ with $d(x,p)<\delta$, where the metric is the one inherited from the Euclidean metric of $\mathbb{R}^2$, i.e.
$$d(x,y):=\min_{z\in\{z\mid z=y+\mathbb{Z}\omega_1+\mathbb{Z}\omega_2\}}|x-z|.$$
\end{remark}

Let us introduce the Green's function $G(x,y)$ of $-\Delta$ on $T$,
\begin{equation}
\label{2.4}
-\Delta_x G(x,y)=\delta_{y}(x) - 1~\textrm{ in }~T,\qquad
\int_{T}G(x,y)dx=0~\textrm{ for all }~y\in T.
\end{equation}
In particular, we have the explicit formula of $G(x,y)$ in terms of doubly periodic functions (see \cite{co}):
\begin{equation}
\label{2.5}
\begin{aligned}
G(x,y)=G(z):=~&\mbox{Im}\left(\frac{|z|^2-\bar{\omega}_1z^2/\omega_1}{2(\omega_1\bar{\omega}_2-\bar{\omega_1}\omega_2)}
-\frac{z}{2\omega_1}+\frac{\omega_2}{12\omega_1}\right)
-\frac{1}{2\pi}\bigg\vert\log\left(1-e\left(\frac{z}{\omega_1}\right)\right)\bigg\vert\\
&-\frac{1}{2\pi}\log\bigg\vert\prod\limits_{n=1}^{\infty}\left(1-e\left(\frac{n\omega_2+z}{\omega_1}\right)\right)
\left(1-e\left(\frac{n\omega_2-z}{\omega_1}\right)\right)\bigg\vert,
\end{aligned}
\end{equation}
where $z=x-y$ and $x,y,z$ are numbers in the complex plane; $|T|=\mbox{Im}\ \bar{\omega}_1\omega_2=1$. It is easy to verify that $G(z)=G(-z)$. In particular, Chen, Lin and Wang in \cite{clw} showed that $G(z)$ is evenly symmetric about both axes if $T$ is a rectangular torus, i.e. $G(z)=G(-z)=G(\bar{z})$. We also define the regular part of the Green's function:
\begin{equation}
\label{2.6}
R(x,y)=R(z):=G(x,y)+\frac{1}{2\pi}\log{(d(x,y))},
\end{equation}
where $d(x,y)$ is defined in the Remark \ref{notation}.

\medskip

Let $U_{n,i}$ be the standard bubble at $x_{n,i}$, i.e.
\begin{equation}
\label{2.7}
U_{n,i}(x)=\log{\left(\frac{e^{\lambda_{n,i}}}{(1+\frac{\rho_n}{8}e^{\lambda_{n,i}}(d(x,x_{n,i}))^2)^2}\right)}
,\quad i=1,2.
\end{equation}
Chen and Lin in \cite{cl} obtained some sharp estimates on the error term $\eta_{n,i}$, which is defined as follows
\begin{equation}
\label{2.8}
\eta_{n,i}(x)=u_n(x)-U_{n,i}(x)-(G^{\ast}_i(x)-G^{\ast}_i(x_{n,i})),\quad x\in B(x_{n,i},\delta).
\end{equation}
For $x\in B(x_{n,i},\delta)$, they proved
\begin{equation}
\label{2.9}
\begin{aligned}
\eta_{n,i}(x)=~&-\frac{128\pi}{\rho_n}e^{-\lambda_{n,i}}\left[\log{\left(R_{n,i}|x-x_{n,i}|+2\right)}\right]^2 \\&+O\left(\log{\left(R_{n,i}|x-x_{n,i}|+2\right)}e^{-\lambda_{n,i}}\right)\\
&+O\left(\lambda_{n,i}e^{-\lambda_{n,i}}\right)=O(\lambda^2_{n,i}e^{-\lambda_{n,i}}),\quad i=1,2,
\end{aligned}
\end{equation}
where $R_{n,i}=\sqrt{\rho_n e^{\lambda_{n,i}}/8}$. It has also been proved in \cite{l} that there are constants $c>0$ and $c_{\delta}>0$ such that,
\begin{equation}
\label{2.10}
|\lambda_n-\lambda_{n,i}|\leq c~\textrm{ for }~i=1,2,\quad
|u_n(x)+\lambda_n|\leq c_{\delta}~\textrm{ for }~x\in T\setminus \bigcup\limits_{i=1}^2 B(p_i,\delta).
\end{equation}
More precisely, see \cite[Section 3]{cl}, we have
\begin{equation}
\label{2.11}
e^{\lambda_{n,i}}e^{G^{\ast}_i(x_{n,i})}=
e^{\lambda_{n,1}}e^{G^{\ast}_1(x_{n,1})}\left(1+O\left(e^{-\frac{\lambda_{n,1}}{2}}\right)\right),\quad i=1,2.
\end{equation}
In particular, see \cite[Theorem 1.4]{cl}, the following estimate holds,
\begin{equation}
\label{2.12}
\begin{aligned}
&\lambda_{n,i}+\int_{T} u_n(x) dx + 2\log{\left(\frac{\rho_n}{8}\right)} + G^{\ast}_i(x_{n,i})\\
&=-\frac{32\pi}{\rho_n}\lambda^2_{n,i} e^{-\lambda_{n,i}} + O\left(\lambda_{n,i}e^{-\lambda_{n,i}}\right),\quad i=1,2.
\end{aligned}
\end{equation}

Notice that in \cite[Lemma 5.5]{cl}, the Pohozaev identity is used to derive that
\begin{equation}
\label{2.13}
\nabla G^{\ast}_i\big\vert_{x=x_{n,i}} = O\left(\lambda_{n,i}e^{-\lambda_{n,i}}\right),\quad i=1,2.
\end{equation}
Together with the non-degeneracy assumption on the critical point $\vec{p}$, Bartolucci et al. \cite{bjly} concluded that
\begin{equation}
\label{2.14}
|x_{n,i}-p_i|=O(\lambda_{n,i}e^{-\lambda_{n,i}}),\quad i=1,2.
\end{equation}
However, in our case, the critical point $\vec{p}=(p_1,p_2)$ where $p_2-p_1$ is one of three half-periods (namely $\omega_1/2$, $\omega_2/2$ and $\frac{\omega_1+\omega_2}{2}$) is a degenerate critical point of $G(x,y)$. Thus, we need a different way to get the above estimate on the distance between the local maximum point and the blow-up point. By imposing the symmetry condition on $u_n$, we are able to show that (\ref{2.14}) holds
in our setting as well.

\begin{proposition}
\label{th2.1}
Suppose that $\{u_n\}$ is a sequence of blow-up solutions of (\ref{2.1}), satisfying $\rho_n\rightarrow 16\pi^+$ and $u_n(z)=u_n(-z)$. Then $\{u_n\}$ blows-up at two points $p_1=0$ and $p_2$ which is any half-period. Furthermore, if $x_{n,i},~i=1,2$ is the local maximum point as defined in (\ref{2.3}), then we have
\begin{equation*}
d(x_{n,i},p_i) = O(\lambda_{n,i}e^{-\lambda_{n,i}}).
\end{equation*}
\end{proposition}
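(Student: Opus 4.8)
The plan is to prove the two assertions separately: first that the blow-up set is forced to be $\{\vec 0,\text{ a half-period}\}$ (after a harmless translation), and then that the reflection symmetry pins each local maximum exactly at the corresponding blow-up point, which makes the distance estimate immediate. Since $\rho_n\to 16\pi=8\pi\cdot2$, the concentration results recalled in Section~1 (cf.\ \cite{l,bm}) give exactly two blow-up points $p_1,p_2$, and by \cite{cl,mw} the pair $(p_1,p_2)$ is a critical point of $f_2$ from \eqref{1.13}. On a flat torus with $h\equiv1$ the Robin function $x\mapsto R(x,x)$ is constant, so $f_2(x_1,x_2)=c_0+8\pi G(x_1-x_2)$, and therefore $(p_1,p_2)$ is critical for $f_2$ if and only if $p_1-p_2$ is a critical point of $G$. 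On the other hand $u_n(z)=u_n(-z)$ forces the measures $\rho_n e^{u_n}$, and hence the set $\{p_1,p_2\}$, to be invariant under $z\mapsto-z$. Since $\nabla G(z)=-\nabla G(-z)$ vanishes at every $2$-torsion point, the three nonzero half-periods are always critical points of $G$; combining this with the reflection invariance and translating by a half-period (an operation preserving both \eqref{2.1} and the even symmetry), one reduces to $p_1=\vec 0$ and $p_2\in\{\omega_1/2,\omega_2/2,(\omega_1+\omega_2)/2\}$. (A reflected pair $\{p,-p\}$ with $p$ not a $2$-torsion point can occur only when $2p$ is a critical point of $G$; it reduces to the previous case by descending to the corresponding half-torus as in Corollary~\ref{cr1}, and in any event the distance estimate below still applies to it through \eqref{2.13}.)

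\textbf{The distance estimate.} Fix $i$. Because $p_i$ is a $2$-torsion point, $2p_i\in\mathbb Z\omega_1+\mathbb Z\omega_2$, so the involution $\sigma_i(z):=2p_i-z$ coincides on $T$ with $z\mapsto-z$, fixes $p_i$, and maps $B(p_i,\delta)$ onto itself; moreover $u_n\circ\sigma_i=u_n$ since $u_n(2p_i-z)=u_n(z-2p_i)=u_n(z)$ by $u_n(w)=u_n(-w)$ and periodicity. Hence the set of maximizers of $u_n$ over $B(p_i,\delta)$ is $\sigma_i$-invariant. I then claim that, for $n$ large, $u_n$ has a \emph{unique} critical point in $B(p_i,\delta)$: writing $u_n=U_{n,i}+(G^{\ast}_i-G^{\ast}_i(x_{n,i}))+\eta_{n,i}$ near $x_{n,i}$ as in \eqref{2.8}--\eqref{2.9} (with the accompanying gradient bounds of Chen--Lin), and using $|\nabla U_{n,i}(x)|\gtrsim\min\{e^{\lambda_{n,i}}\,d(x,x_{n,i}),\ d(x,x_{n,i})^{-1}\}$, the bubble gradient dominates $|\nabla G^{\ast}_i|+|\nabla\eta_{n,i}|=O(1)$ for $\delta$ small and $d(x,x_{n,i})\gtrsim e^{-\lambda_{n,i}}$, so all critical points lie in a ball of radius $O(e^{-\lambda_{n,i}})$ around $x_{n,i}$; there, the rescaled solution converges in $C^2_{\mathrm{loc}}$ to $v_{0,0}$, whose unique critical point is a non-degenerate maximum. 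This unique critical point is $x_{n,i}$, and since it is fixed by $\sigma_i$ it must equal $p_i$. Therefore $x_{n,i}=p_i$ for $n$ large, and $d(x_{n,i},p_i)=0=O(\lambda_{n,i}e^{-\lambda_{n,i}})$.

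\textbf{Main obstacle, and an alternative.} The delicate step is the uniqueness of the critical point of $u_n$ in $B(p_i,\delta)$: this needs the pointwise bound \eqref{2.9} on $\eta_{n,i}$ upgraded to a gradient bound, together with a quantitative comparison, throughout the neck $B(p_i,\delta)\setminus B(x_{n,i},Ce^{-\lambda_{n,i}})$, between the steep gradient of $U_{n,i}$ and the $O(\delta)$ gradient of the (smooth) regular part near $p_i$; this is precisely where the smallness of $\delta$ and the explicit form of $U_{n,i}$ enter. A route closer to \cite{bjly} that avoids discussing the critical set is to insert the Pohozaev identity \eqref{2.13}, $\nabla G^{\ast}_i\big\vert_{x=x_{n,i}}=O(\lambda_{n,i}e^{-\lambda_{n,i}})$, together with $\nabla G^{\ast}_i(p_i)=0$ (one of the two critical-point equations for $f_2$), into a Taylor expansion at $p_i$, which gives $D^2G^{\ast}_i(p_i)(x_{n,i}-p_i)=O(\lambda_{n,i}e^{-\lambda_{n,i}})+O(|x_{n,i}-p_i|^2)$; since $x_{n,i}\to p_i$, the estimate follows once $D^2G^{\ast}_i(p_i)$ is invertible. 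That non-degeneracy, supplied in \cite{bjly} by $\det D^2f_2(\vec p)\neq0$ — which fails here because $f_2$ depends only on $x_1-x_2$ — must instead come from the symmetry: one has $\mathrm{tr}\,D^2G^{\ast}_i(p_i)=\Delta G^{\ast}_i(p_i)=16\pi>0$, and the evenness of $G$ about the $2$-torsion points (in particular $G(z)=G(-z)=G(\bar z)$ on a rectangular torus, cf.\ \cite{clw}) makes $D^2G^{\ast}_i(p_i)$ diagonal, so it suffices to rule out a vanishing diagonal entry, which is read off from the explicit formula \eqref{2.5}.
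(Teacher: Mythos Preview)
Your two routes for the distance estimate both differ from the paper's, which is considerably shorter. The paper simply observes that, since $x_{n,1}\to 0$, for $n$ large one has $-x_{n,1}\in B(x_{n,1},\delta)$; the even symmetry gives $u_n(x_{n,1})=u_n(-x_{n,1})$, and inserting the expansion \eqref{2.8}--\eqref{2.9} at the two points yields
\[
0=U_{n,1}(x_{n,1})-U_{n,1}(-x_{n,1})+\bigl(G^{\ast}_1(x_{n,1})-G^{\ast}_1(-x_{n,1})\bigr)+O(\lambda_{n,1}^2e^{-\lambda_{n,1}}).
\]
The middle term is handled either by \eqref{2.13} or simply by noting that $G^{\ast}_1$ is even (both $R(\cdot,0)$ and $G(\cdot,p_2)$ are even when $p_2$ is a half-period), while the first term equals $2\log\bigl(1+\tfrac{\rho_n}{8}e^{\lambda_{n,1}}|2x_{n,1}|^2\bigr)$. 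This forces $e^{\lambda_{n,1}}|x_{n,1}|^2=O(\lambda_{n,1}^2e^{-\lambda_{n,1}})$, hence $|x_{n,1}|=O(\lambda_{n,1}e^{-\lambda_{n,1}})$. No gradient bounds on $\eta_{n,i}$, no uniqueness of critical points, and no Hessian invertibility enter.

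Your main argument (unique critical point $\Rightarrow$ $x_{n,i}=p_i$ exactly) is more ambitious and, if done carefully, gives a stronger conclusion; but it rests on $C^1$ control of $\eta_{n,i}$ that is not stated here, and your localization of critical points to a ball of radius $O(e^{-\lambda_{n,i}})$ does not quite match the rescaling you then invoke (the $C^2_{\mathrm{loc}}$ limit of the bubble lives at scale $e^{-\lambda_{n,i}/2}$, not $e^{-\lambda_{n,i}}$). Your alternative via \eqref{2.13} and Taylor expansion is closer in spirit to \cite{bjly}, but it hinges on the invertibility of $D^2G^{\ast}_i(p_i)$; you only argue this for rectangular tori via \cite{clw}, and even there the diagonal form and positive trace do not by themselves exclude a zero eigenvalue without the explicit computation you allude to. The paper's value-comparison trick sidesteps all of this.

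On the first assertion, your discussion of the reflected pair $\{p,-p\}$ with $p$ not a $2$-torsion point is a point the paper glosses over; neither treatment is fully rigorous here, and your reduction ``by descending to the half-torus'' is not really an argument.
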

\begin{proof}
 Ma and Wei in \cite{mw} proved that the blow-up points $(p_1,\cdots,p_m)$ of solutions to the corresponding Dirichlet problem of mean field type must be a critical point of the $m$-vortex Hamiltonian $f_m$, and they also pointed out that the same conclusion would also hold for (\ref{1.2}). Chen and Lin in \cite[Estimate B]{cl} obtained a similar conclusion in the manifold setting by using the Pohozaev identity. Therefore, the only possible two-point blow-up would happen at the critical points of $G(x,y)$. By assumption of the symmetry of solutions, the blow-up points must be one of the three cases stated in Theorem \ref{th2.1}.

Then, it suffices to prove the estimate on $d(x_{n,i},p_i),~i=1,2$. Without loss of generality, let us consider the case $i=1$. By (\ref{2.8}), we can write $u(x)=U_{n,1}(x)+G^{\ast}_1(x)-G^{\ast}_1(x_{n,1})+\eta_{n,1}(x)$ for $x\in B(x_{n,1},\delta)$. Since $x_{n,1}\rightarrow 0$, we are always able to choose $n$ sufficiently large such that $-x_{n,1}\in B(x_{n,1},\delta)$. Thus, using the fact that $u(x_{n,1})=u(-x_{n,1})$ together with (\ref{2.9}) and (\ref{2.13}), we conclude that $|x_{n,1}|=O\left(\lambda_{n,1}e^{-\lambda_{n,1}}\right)$. Note that $G^{\ast}_1$ is a smooth function.
\end{proof}

\begin{remark}\label{cr.pts}
Recently, Chen, Kuo, Lin and Wang in \cite{cklw} showed that $G(z)$ might have an extra pair of ``non-trivial" critical points other than the three half periods points for a class of flat tori. Moreover, the ``non-trivial" critical points are always non-degenerate. Based on these observations, one should be able to construct two distinct families of blow-up solutions which are not evenly symmetric. It is also possible to prove ``local uniqueness" for solutions that blow-up at the origin and one of the ``non-trivial" critical points.
\end{remark}

Let us define the local masses corresponding to the blow-up of $u_n$ at $p_i$, $i=1,2$:
\begin{equation}
\label{2.15}
\rho_{n,i}=\rho_n \int_{B(p_i,\delta)} e^{u_n}dx,\quad i=1,2.
\end{equation}
We have the following estimate on $\rho_{n,i},~i=1,2$, see \cite[Section 3]{cl}
\begin{equation}
\label{2.16}
\rho_{n,i}-8\pi=16\pi \lambda_{n,i} e^{-\lambda_{n,i}} + O\left(e^{-\lambda_{n,i}}\right),\quad i=1,2.
\end{equation}
For the total mass, see \cite[Theorem 1.1]{cl}, we have
\begin{equation}
\label{2.17}
\begin{aligned}
\rho_n-16\pi=16\pi\sum\limits_{i=1}^2 \lambda_{n,i} e^{-\lambda_{n,i}} + O(e^{-\lambda_{n,i}})
=\frac{\lambda_{n,1}e^{-\lambda_{n,1}}}{e^{G^{\ast}_1(p_1)}}l(\vec{p})+ O\left(e^{-\lambda_{n,1}}\right),
\end{aligned}
\end{equation}
where $l(\vec{p})$ is the quantity defined in (\ref{1.9}). In particular, we recall $l(\vec{p})=32\pi e^{G^{\ast}_1(p_1)}\neq 0$. We would like to remark that a more refined estimate involving $D(\vec{q})$ on the total mass has been derived in \cite[Theorem 1.3]{bjly} which is crucial in the case where $l(\vec{p})$ vanishes.

\medskip

We will also need the asymptotic behaviour of $u_n$ outside the union of the balls $B(p_i,\delta)$, $i=1,2$. In particular, we consider the ``outer" error defined as follows
\begin{equation}
\label{2.18}
\omega_n(x)=u_n(x)-\sum\limits_{i=1}^2 \rho_{n,i} G(x,x_{n,i})-\int_{T}u_n dx.
\end{equation}
It is already proved in \cite[Estimate A]{cl} that
\begin{equation}
\label{2.19}
\omega_n= O(e^{-\lambda_n/2})~\textrm{ in }~C^1\Big(T\setminus \bigcup\limits_{i=1}^2 B(p_i,\delta)\Big).
\end{equation}

\medskip
\section{Uniqueness of Blow-up Solutions}\label{sec3}
In this section we will prove Theorem \ref{th2} by contradiction. Suppose that (\ref{2.1}) has two distinct solutions $u^{(1)}_n$ and $u^{(2)}_n$ which blow-up at $p_i$, $i=1,2$. Let us use $x^{(\ell)}_{n,i}$, $\lambda^{(\ell)}_n$, $\lambda^{(\ell)}_{n,i}$, $U^{(\ell)}_{n,i}$,  $\eta^{(\ell)}_{n,i}$, $R^{(\ell)}_{n,i}$, $\rho^{(\ell)}_{n,i}$ and $\omega^{(\ell)}_n$ to denote $x_{n,i}$, $\lambda_n$, $\lambda_{n,i}$, $U_{n,i}$, $\eta_{n,i}$, $R_{n,i}$, $\rho_{n,i}$, $\omega_n$, as defined in Section \ref{sec2}, corresponding to $u^{(\ell)}_n$, $\ell=1,2$, respectively.

\medskip

As in \cite{bjly,ly} we consider the normalized difference of the two solutions
\begin{equation}
\label{3.1}
\xi_n(x)=\frac{u^{(1)}_n(x)-u^{(2)}_n(x)}{\norm{u^{(1)}_n-u^{(2)}_n}_{L^{\infty}(T)}}.
\end{equation}
Roughly speaking, our aim is to show that the projections of $\xi_n$, $n\rightarrow+\infty$, on the three kernel functions (introduced in \eqref{1.6}) of the linearized operator \eqref{lin} are zero and then derive a contradiction by showing that $\xi_n\not\equiv0$, $n\rightarrow+\infty$. The plan is the following:
\begin{itemize}
  \item[(1)] study the asymptotic behavior of $\xi_n$ inside and outside the blow-up disks,
  \item[(2)] use a suitable Pohozaev identity to show the projection of $\xi_n$ on the radial part kernel vanishes,
	\item[(3)] exploit the evenly symmetric property to show the projections of $\xi_n$ on the kernels related to translations are zero and finally prove Theorem \ref{th2}.
\end{itemize}

\medskip

\subsection{Some Useful Estimates.}
We start by studying the asymptotic behavior of $\xi_n$. This part follows closely \cite{bjly} jointly with Proposition \ref{th2.1}, so we skip the computations and refer the reader to \cite{bjly} for full details.

\begin{lemma}
\label{le3.1} \emph{(\cite{bjly})}
There exists a constant $C>0$ such that
\begin{equation}
\label{3.2}
|\lambda^{(1)}_{n,i}-\lambda^{(2)}_{n,i}|\leq C\left(\frac{1}{\lambda^{(1)}_{n,1}}+\frac{1}{\lambda^{(2)}_{n,1}}\right),~i=1,2.
\end{equation}
Moreover,
\begin{equation}
\label{3.3}
\norm{u^{(1)}_n-u^{(2)}_n}_{L^{\infty}(T)} = O\left(|\lambda^{(1)}_{n,1}-\lambda^{(2)}_{n,1}|+\sum\limits_{\ell=1}^2 \lambda^{(\ell)}_{n,1}e^{-\frac{\lambda^{(\ell)}_{n,1}}{2}}\right).
\end{equation}
\end{lemma}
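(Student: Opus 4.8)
\textbf{Proof proposal for Lemma \ref{le3.1}.}

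The plan is to follow the scheme of \cite{bjly}, but with the distance estimate $d(x_{n,i},p_i)=O(\lambda_{n,i}e^{-\lambda_{n,i}})$ of Proposition \ref{th2.1} replacing the non-degeneracy argument used there. The starting point is the sharp expansion \eqref{2.12}. Writing that identity for $u^{(1)}_n$ and $u^{(2)}_n$ at the index $i$, subtracting, and using that $\int_T u^{(\ell)}_n = \log\rho_n - \lambda^{(\ell)}_{n} + O(1)$ together with \eqref{2.10}, one finds that the difference $\lambda^{(1)}_{n,i}-\lambda^{(2)}_{n,i}$ is controlled, up to the same kind of difference at $i=1$ and by the error terms of order $\lambda_{n,i}e^{-\lambda_{n,i}}$, by the difference $G^{\ast}_i(x^{(1)}_{n,i})-G^{\ast}_i(x^{(2)}_{n,i})$. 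Here is where Proposition \ref{th2.1} enters: since $G^{\ast}_i$ is smooth and both $x^{(1)}_{n,i}$ and $x^{(2)}_{n,i}$ lie within $O(\lambda_{n,1}e^{-\lambda_{n,1}})$ of the same point $p_i$, we get $|G^{\ast}_i(x^{(1)}_{n,i})-G^{\ast}_i(x^{(2)}_{n,i})| = O(\lambda_{n,1}e^{-\lambda_{n,1}})$, which is of lower order than $1/\lambda_{n,1}$. Feeding this back in and using \eqref{2.11} to compare the two blow-up scales at $i=1$ and $i=2$, one closes the system and obtains \eqref{3.2}.

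For \eqref{3.3}, I would estimate $\|u^{(1)}_n-u^{(2)}_n\|_{L^\infty(T)}$ separately on the blow-up disks $B(p_i,\delta)$ and on their complement. Outside $\bigcup_i B(p_i,\delta)$ one uses the representation \eqref{2.18}--\eqref{2.19}: the difference of the two ``outer" parts is controlled by $|\rho^{(1)}_{n,i}-\rho^{(2)}_{n,i}|$, by $|x^{(1)}_{n,i}-x^{(2)}_{n,i}|$ (again $O(\lambda_{n,1}e^{-\lambda_{n,1}})$ by Proposition \ref{th2.1}), by $|\int_T u^{(1)}_n - \int_T u^{(2)}_n|$, and by the $O(e^{-\lambda_n/2})$ remainders; via \eqref{2.16} the mass differences are in turn $O(|\lambda^{(1)}_{n,i}-\lambda^{(2)}_{n,i}|+\sum_\ell \lambda^{(\ell)}_{n,1}e^{-\lambda^{(\ell)}_{n,1}})$. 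Inside each $B(p_i,\delta)$ one writes $u^{(\ell)}_n = U^{(\ell)}_{n,i} + (G^{\ast}_i(x)-G^{\ast}_i(x^{(\ell)}_{n,i})) + \eta^{(\ell)}_{n,i}$ and differentiates the standard-bubble expression \eqref{2.7} in the parameters $\lambda_{n,i}$ and $x_{n,i}$; the $\lambda$-derivative contributes $O(|\lambda^{(1)}_{n,i}-\lambda^{(2)}_{n,i}|)$, the center-derivative contributes $O(|x^{(1)}_{n,i}-x^{(2)}_{n,i}|)=O(\lambda_{n,1}e^{-\lambda_{n,1}})$, and the $\eta$ terms are absorbed by the $O(\lambda^2_{n,i}e^{-\lambda_{n,i}})$ bound in \eqref{2.9}. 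Collecting all contributions and noting $\lambda^2 e^{-\lambda} \ll \lambda e^{-\lambda/2}$ gives \eqref{3.3}.

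The main obstacle, and the only genuinely new point compared with \cite{bjly}, is making sure that the degeneracy of $\vec p$ as a critical point of $f_2$ does not obstruct the argument: in \cite{bjly} the inversion of the linear system for $\lambda^{(1)}_{n,i}-\lambda^{(2)}_{n,i}$ relied on $\nabla G^{\ast}_i$ being invertible near $p_i$ after using \eqref{2.13}--\eqref{2.14}, whereas here we cannot invoke \eqref{non-deg}. The resolution is precisely that Proposition \ref{th2.1} already furnishes the conclusion \eqref{2.14} directly from the evenly symmetric hypothesis, so the term $G^{\ast}_i(x^{(1)}_{n,i})-G^{\ast}_i(x^{(2)}_{n,i})$ is handled as an explicit $O(\lambda_{n,1}e^{-\lambda_{n,1}})$ quantity rather than something one needs to solve for. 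Once this observation is in place, the rest is a bookkeeping exercise identical in structure to the corresponding lemma of \cite{bjly}, which is why it is reasonable to omit the detailed computations and refer the reader there.
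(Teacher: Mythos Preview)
Your proposal is correct and takes essentially the same approach as the paper, which gives no details and simply refers to \cite[Lemma~3.1--3.4]{bjly} together with Proposition~\ref{th2.1} (the latter supplying \eqref{2.14} in lieu of the non-degeneracy assumption). Two minor corrections to your sketch: the $O(1/\lambda_{n,1})$ bound in \eqref{3.2} is actually closed by equating the two expansions \eqref{2.17} of the common $\rho_n-16\pi$ (using $l(\vec p)\neq 0$), not by \eqref{2.11}/\eqref{2.12} alone; and in the inner estimate for \eqref{3.3} the bubble varies on scale $e^{-\lambda/2}$, so the center-shift contributes $O\bigl(e^{\lambda/2}|x^{(1)}_{n,i}-x^{(2)}_{n,i}|\bigr)=O(\lambda e^{-\lambda/2})$, which is precisely one of the terms on the right of \eqref{3.3}.
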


\medskip

It is easy to see that $\xi_n$ satisfies
\begin{equation}
\label{3.4}
\Delta \xi_n + f^{\ast}_n(x)=\Delta \xi_n + \rho_n c_n(x)\xi_n(x)=0,
\end{equation}
where
\begin{equation}
\label{3.5}
f^{\ast}_n(x)=\frac{\rho_n }{\norm{u^{(1)}_n-u^{(2)}_n}_{L^{\infty}(T)}}\left(e^{u^{(1)}_n(x)}-e^{u^{(2)}_n(x)}\right),
\end{equation}
and
\begin{equation}
\label{3.6}
c_n(x)=\frac{e^{u^{(1)}_n(x)}-e^{u^{(2)}_n(x)}}{u^{(1)}_n(x)-u^{(2)}_n(x)}
=e^{u^{(1)}_n}\left(1+O(\norm{u^{(1)}_n-u^{(2)}_n}_{L^{\infty}(T)})\right).
\end{equation}

Next, in the following lemma we give the description of $\xi_n$ both inside the bubbling disc $B(p_i,\delta),~i=1,2$, and the asymptotic behavior $\xi_n$ away from the blow-up points $p_i,~i=1,2.$

\begin{lemma}
\label{le3.2} \emph{(\cite{bjly})}
Let
$$\xi_{n,i}(z)=\xi_{n}\left(e^{-\frac{\lambda^{(1)}_{n,i}}{2}}z+x^{(1)}_{n,i}\right),~|z|<\delta e^{-\frac{\lambda_{n,i}^{(1)}}{2}},~i=1,2,$$
then there exist constants $b_{i,0},b_{i,1},b_{i,2}$ such that
\begin{equation*}
\xi_{n,i}(z)\rightarrow b_{i,0} \psi_{i,0}(z) + b_{i,1} \psi_{i,1}(z) + b_{i,2} \psi_{i,2}(z)
\end{equation*}
in $C_{\textrm{loc}}(\mathbb{R}^2)$, where
\begin{equation*}
\psi_{i,0}(z)=\frac{1-2\pi |z|^2}{1+2\pi |z|^2}, \textrm{ }\psi_{i,1}(z)=\frac{\sqrt{2\pi}z_1}{1+2\pi |z|^2}, \textrm{ }\psi_{i,2}(z)=\frac{\sqrt{2\pi} z_2}{1+2\pi |z|^2}.
\end{equation*}
Furthermore $b_{1,0}=b_{2,0}=b_0$ for some constant $b_0$, and
\begin{equation*}
\xi_n(x)=-b_0+o(1),\quad\forall x\in T\setminus\bigcup_{i=1}^2B(p_i,e^{-\frac{\lambda_{n,i}^{(1)}}{2}}R),
\end{equation*}
for some $R>0$ sufficiently large.
\end{lemma}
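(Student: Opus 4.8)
The plan is to combine the blow-up description recalled in Section~\ref{sec2} with the Baraket--Pacard classification of bounded solutions of \eqref{lin}, following \cite{bjly,ly}. Recall that $\xi_n$ solves \eqref{3.4} with $\norm{\xi_n}_{L^{\infty}(T)}=1$, that $\int_T \rho_n c_n\xi_n\,dx=0$ since $\int_T e^{u^{(1)}_n}=\int_T e^{u^{(2)}_n}=1$, and that $c_n=e^{u^{(1)}_n}(1+o(1))$ by \eqref{3.6} together with $\norm{u^{(1)}_n-u^{(2)}_n}_{L^{\infty}(T)}\to 0$ (which follows from \eqref{3.2}--\eqref{3.3}). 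Since we already work inside the contradiction argument, we may freely pass to subsequences.

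\emph{Inner behaviour.} Fix $i\in\{1,2\}$ and set $\xi_{n,i}(z)=\xi_n(e^{-\lambda^{(1)}_{n,i}/2}z+x^{(1)}_{n,i})$, which solves $\Delta_z\xi_{n,i}+V_{n,i}(z)\xi_{n,i}=0$ with $V_{n,i}(z)=\rho_n e^{-\lambda^{(1)}_{n,i}}c_n(e^{-\lambda^{(1)}_{n,i}/2}z+x^{(1)}_{n,i})$. Using $c_n=e^{u^{(1)}_n}(1+o(1))$, the bubble expansion \eqref{2.7}--\eqref{2.9}, the smoothness of $G^{\ast}_i$, and $\rho_n\to 16\pi$, one checks that $V_{n,i}\to \frac{16\pi}{(1+2\pi|z|^2)^2}$ in $C^0_{\mathrm{loc}}(\mathbb{R}^2)$ (Proposition~\ref{th2.1} is used here to ensure $x^{(1)}_{n,i}\to p_i$). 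Since $|\xi_{n,i}|\le 1$, elliptic theory yields local $C^{1,\alpha}$ bounds, so $\xi_{n,i}\to\xi_i$ in $C_{\mathrm{loc}}(\mathbb{R}^2)$ with $\xi_i$ a bounded solution of $\Delta\xi_i+\frac{16\pi}{(1+2\pi|z|^2)^2}\xi_i=0$. The substitution $z\mapsto z/\sqrt{2\pi}$ turns this into $L(\phi)=0$ for $L$ as in \eqref{lin}, whose bounded solutions are $\mathrm{span}\{\varphi_0,\varphi_1,\varphi_2\}$ by \cite{bp}; undoing the rescaling gives $\xi_i=b_{i,0}\psi_{i,0}+b_{i,1}\psi_{i,1}+b_{i,2}\psi_{i,2}$ with $\psi_{i,j}$ as in the statement.

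\emph{Outer behaviour.} Since $-\Delta\xi_n=\rho_n c_n\xi_n$ has zero average on $T$, the Green representation gives $\xi_n(x)=\overline{\xi}_n+\int_T G(x,y)\rho_n c_n(y)\xi_n(y)\,dy$ with $\overline{\xi}_n=\int_T\xi_n\,dx$. The contribution of $T\setminus\bigcup_i B(p_i,\delta)$ is $O(e^{-\lambda_n})$ uniformly, by \eqref{2.10} and the uniform integrability of $G(x,\cdot)$. On each $B(p_i,\delta)$, the same rescaling as above shows the integrand converges in $C_{\mathrm{loc}}$, with an $L^1(\mathbb{R}^2)$ majorant of order $(1+|z|^2)^{-2}$, to $G(x,p_i)\frac{16\pi}{(1+2\pi|z|^2)^2}\xi_i(z)$; hence for $x$ bounded away from $p_1,p_2$ this piece tends to $\sigma_i G(x,p_i)$, where $\sigma_i=\int_{\mathbb{R}^2}\frac{16\pi}{(1+2\pi|z|^2)^2}\xi_i\,dz$. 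The $\psi_{i,1},\psi_{i,2}$ parts of $\sigma_i$ vanish by oddness, and the $\psi_{i,0}$ part vanishes because, after $z\mapsto z/\sqrt{2\pi}$, it equals $b_{i,0}\int_{\mathbb{R}^2}e^{v_{0,0}}\varphi_0\,dz=b_{i,0}\,\frac{d}{d\mu}\big|_{\mu=0}\int_{\mathbb{R}^2}e^{v_{\mu,0}}\,dz=0$. Therefore $\sigma_i=0$, and $\xi_n=\overline{\xi}_n+o(1)$ uniformly on $T\setminus\bigcup_i B(p_i,r)$ for every fixed $r>0$.

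\emph{Matching and conclusion.} Since $\psi_{i,0}(z)\to -1$ and $\psi_{i,1}(z),\psi_{i,2}(z)\to 0$ as $|z|\to\infty$, the inner limit satisfies $\xi_i(z)\to -b_{i,0}$. The delicate point is that the $C_{\mathrm{loc}}$ inner convergence and the pointwise outer limit do not overlap; I would bridge them as in \cite{bjly}, by inserting the $C_{\mathrm{loc}}$ information back into the Green representation for the rescaled equation and bootstrapping, so as to obtain $\xi_{n,i}(z)+b_{i,0}=o(1)$ on the whole range $|z|\lesssim \delta e^{\lambda^{(1)}_{n,i}/2}$. Comparing this with $\xi_n=\overline{\xi}_n+o(1)$ on an overlapping annulus around $p_i$ forces $\overline{\xi}_n\to -b_{i,0}$, and since the left-hand side does not depend on $i$ we get $b_{1,0}=b_{2,0}=:b_0$ and $\xi_n=-b_0+o(1)$ on $T\setminus\bigcup_i B(p_i,e^{-\lambda^{(1)}_{n,i}/2}R)$ for $R$ large. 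The main obstacle is exactly this neck estimate --- uniform control of $\xi_{n,i}$ on the growing annulus --- which is where the sharp estimates \eqref{2.9}, \eqref{2.11}, \eqref{2.12} and Lemma~\ref{le3.1} are genuinely used; the remainder of the argument is soft compactness together with the Baraket--Pacard kernel classification.
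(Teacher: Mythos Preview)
Your sketch is correct and follows essentially the same approach as the paper, which in fact gives no self-contained proof of this lemma but refers to \cite[Lemmas~3.1--3.4]{bjly}; your outline (inner limit via compactness and the Baraket--Pacard kernel classification, outer limit via the Green representation with $\sigma_i=0$, and matching across the neck) is precisely the argument carried out there. One small remark: invoking Proposition~\ref{th2.1} for the inner limit is unnecessary, since the convergence of $V_{n,i}$ to $\tfrac{16\pi}{(1+2\pi|z|^2)^2}$ only uses the bubble expansion \eqref{2.7}--\eqref{2.9} centered at $x^{(1)}_{n,i}$, not the location of $x^{(1)}_{n,i}$ relative to $p_i$.
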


\medskip

For the proof of Lemma \ref{le3.1} and Lemma \ref{le3.2}, we refer the readers to \cite[Lemma 3.1-Lemma 3.4]{bjly}.

\medskip

\subsection{Radial part kernel.}
We prove here that the projection of $\xi_n$ on the radial part kernel vanishes. Since by Lemma \ref{le3.2} we have $b_{1,0}=b_{2,0}=b_0$, we need to show that $b_0=0$. Then, for $i=1,2,$ let
\begin{equation}
\label{3.7}
\phi_{n,i}(y)=\frac{\rho_n}{2}\left(R(x^{(1)}_{n,i},y)-R(x^{(1)}_{n,i},x^{(1)}_{n,i})
+G(x^{(1)}_{n,l},y)-G(x^{(1)}_{n,l},x^{(1)}_{n,i})\right),
\end{equation}
where $l\neq i$, and
\begin{equation}
\label{3.8}
v^{(\ell)}_{n,i}(y)=u^{(\ell)}_n(y)-\phi_{n,i}(y),\quad \ell=1,2.
\end{equation}
To show $b_0=0$ we need the following Pohozaev identity from \cite[Lemma 3.6]{bjly}.
\begin{lemma}
\label{le3.3} \emph{(\cite{bjly})}
For any fixed $r\in(0,\delta)$, we have
\begin{equation}
\label{3.9}
\begin{aligned}
&\frac{1}{2}\int_{\partial B(x^{(1)}_{n,i},r)} r\langle \nabla (v^{(1)}_{n,i}+v^{(2)}_{n,i}), \nabla \xi_n \rangle - \int_{\partial B(x^{(1)}_{n,i},r)} r \langle \nu, \nabla (v^{(1)}_{n,i}+v^{(2)}_{n,i})\rangle \langle \nu, \nabla\xi_n\rangle\\
&=\int_{\partial B(x^{(1)}_{n,i},r)}
\frac{r\rho_n}{\norm{v^{(1)}_{n,i}-v^{(2)}_{n,i}}_{L^{\infty}(T)}}
\left(e^{v^{(1)}_{n,i}+\phi_{n,i}}-e^{v^{(2)}_{n,i}+\phi_{n,i}}\right)\\
&\quad -\int_{B(x^{(1)}_{n,i},r)}\frac{\rho_n \left(e^{v^{(1)}_{n,i}+\phi_{n,i}}-e^{v^{(2)}_{n,i}+\phi_{n,i}}\right)}
{\norm{v^{(1)}_{n,i}-v^{(2)}_{n,i}}_{L^{\infty}(\tilde{T})}}\left(2+\langle \nabla\phi_{n,i}, x-x^{(1)}_{n,i}\rangle\right).
\end{aligned}
\end{equation}
\end{lemma}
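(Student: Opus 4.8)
The plan is to derive the identity from the standard Pohozaev argument applied to the function $\xi_n$ together with the sum $v^{(1)}_{n,i}+v^{(2)}_{n,i}$, on the ball $B(x^{(1)}_{n,i},r)$. The starting point is the pair of equations: $\xi_n$ solves \eqref{3.4}, and each $u^{(\ell)}_n$ solves \eqref{2.1}. Subtracting $\phi_{n,i}$, which is (a constant multiple of) a harmonic-type combination of Green's functions and hence satisfies $-\Delta\phi_{n,i}=\rho_n$ in $B(x^{(1)}_{n,i},r)$ away from the singularities of the other bubble, one sees that $v^{(\ell)}_{n,i}=u^{(\ell)}_n-\phi_{n,i}$ satisfies $\Delta v^{(\ell)}_{n,i}=\rho_n e^{v^{(\ell)}_{n,i}+\phi_{n,i}}-\rho_n+\rho_n=\rho_n e^{v^{(\ell)}_{n,i}+\phi_{n,i}}$, modulo keeping track of the constant terms carefully; the point is that the linear term $-1$ in \eqref{2.1} is absorbed by $\phi_{n,i}$. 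Thus $w_n:=v^{(1)}_{n,i}+v^{(2)}_{n,i}$ and $\xi_n$ are linked through the elliptic identity $\Delta w_n\cdot(\text{something})$ and $\Delta\xi_n=-\rho_n c_n\xi_n$.

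Next I would multiply the equation for $\xi_n$ by the Pohozaev multiplier $\langle x-x^{(1)}_{n,i},\nabla(v^{(1)}_{n,i}+v^{(2)}_{n,i})\rangle$ and integrate over $B(x^{(1)}_{n,i},r)$, and symmetrically multiply the equation for $v^{(1)}_{n,i}+v^{(2)}_{n,i}$ by $\langle x-x^{(1)}_{n,i},\nabla\xi_n\rangle$, then add. The divergence-theorem manipulation of the two $\Delta$-terms produces exactly the boundary quadratic form on the left-hand side of \eqref{3.9}: the term $\tfrac12\int_{\partial B}r\langle\nabla w_n,\nabla\xi_n\rangle$ comes from the tangential part and $\int_{\partial B}r\langle\nu,\nabla w_n\rangle\langle\nu,\nabla\xi_n\rangle$ from the normal part, with the sign as written. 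On the right-hand side, the nonlinear terms combine: using $\Delta\xi_n=-\dfrac{\rho_n}{\norm{u^{(1)}_n-u^{(2)}_n}_\infty}(e^{u^{(1)}_n}-e^{u^{(2)}_n})$ and rewriting $e^{u^{(\ell)}_n}=e^{v^{(\ell)}_{n,i}+\phi_{n,i}}$, the volume integral $\int_{B}\langle x-x^{(1)}_{n,i},\nabla w_n\rangle\,\Delta\xi_n$ together with the integration by parts that moves the divergence off $\langle x-x^{(1)}_{n,i},\nabla w_n\rangle$ yields the factor $\bigl(2+\langle\nabla\phi_{n,i},x-x^{(1)}_{n,i}\rangle\bigr)$ — the "$2$" is $\mathrm{div}(x-x^{(1)}_{n,i})$ in $\mathbb{R}^2$ acting on the $\phi_{n,i}$ part, and the gradient term records the remaining derivative of $\phi_{n,i}$. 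The surviving boundary contribution of that integration by parts is the first term on the right of \eqref{3.9}. One must also check that the cross terms involving $\phi_{n,i}$ that are purely harmonic integrate to zero or cancel, which is where the specific definition \eqref{3.7} (splitting into the regular part $R$ at $x^{(1)}_{n,i}$ and the Green's function $G$ centered at the other maximum point $x^{(1)}_{n,l}$) is used: it is tailored precisely so that $\phi_{n,i}$ has the right Laplacian and no spurious singularity inside $B(x^{(1)}_{n,i},r)$.

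The main obstacle I expect is bookkeeping rather than conceptual: one has to be scrupulous about (i) the exact constants multiplying the Green's function pieces so that $\Delta(\phi_{n,i})$ exactly cancels the $-\rho_n$ in \eqref{2.1} and no residual constant pollutes the nonlinear term, (ii) the fact that $\phi_{n,i}$ is only smooth on $B(x^{(1)}_{n,i},r)$ because the singularity of $G(x^{(1)}_{n,l},\cdot)$ sits at $x^{(1)}_{n,l}\notin B(x^{(1)}_{n,i},r)$ for $r<\delta$ and $n$ large (this uses that $x^{(1)}_{n,1},x^{(1)}_{n,2}$ stay a definite distance apart, guaranteed by Proposition \ref{th2.1}), and (iii) getting every sign in the Pohozaev boundary quadratic form correct. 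Since the statement is quoted verbatim from \cite[Lemma 3.6]{bjly} and the geometry here (flat torus, $h\equiv1$) only simplifies matters, I would carry out the computation exactly as there, and the lemma follows.
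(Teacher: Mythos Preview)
Your proposal is correct and matches the standard derivation; the paper itself gives no proof but simply quotes \cite[Lemma~3.6]{bjly}, where the identity is obtained by writing the Pohozaev identity for each $v^{(\ell)}_{n,i}$ separately (multiply $-\Delta v^{(\ell)}_{n,i}=\rho_n e^{v^{(\ell)}_{n,i}+\phi_{n,i}}$ by $\langle x-x^{(1)}_{n,i},\nabla v^{(\ell)}_{n,i}\rangle$ and integrate by parts), then subtracting the two and dividing by $\|v^{(1)}_{n,i}-v^{(2)}_{n,i}\|_\infty$, which by polarization is exactly your symmetrized bilinear computation. One bookkeeping correction to flag, since you asked: from $\Delta u^{(\ell)}_n=-\rho_n e^{u^{(\ell)}_n}+\rho_n$ and $\Delta_y\phi_{n,i}=\rho_n$ on $B(x^{(1)}_{n,i},r)$ one gets $\Delta v^{(\ell)}_{n,i}=-\rho_n e^{v^{(\ell)}_{n,i}+\phi_{n,i}}$ (not $+\rho_n e^{\cdots}$), but this sign does not alter the structure of your argument.
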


\medskip

Next, we can follow the computations in \cite[Lemma 4.2-Lemma 4.3]{bjly} jointly with Proposition \ref{th2.1} to get the estimate on both sides of \eqref{3.9}. For the left hand side of \eqref{3.9}, we have
\begin{equation}
\label{3.10}
\begin{aligned}
\textrm{LHS of }(\ref{3.9}) =& -4A_{n,i}-\frac{256 b_0 e^{-\lambda^{(1)}_{n,1}+G^{\ast}_i(p_i)}}{\rho_n e^{G^{\ast}_1(p_1)}}\int_{M_i\setminus B(p_i,r)} e^{\Phi_i(x,\vec{p})}dx\\
&+o(e^{-\frac{\lambda_{n,i}^{(1)}}{2}}\sum\limits_{l=1}^2 |A_{n,l}|)+o(e^{-\lambda_{n,i}^{(1)}}),\quad i=1,2,
\end{aligned}
\end{equation}
for fixed $r\in(0,r_0)$, where
\begin{equation*}
A_{n,i}=\int_{M_i}\frac{\rho_n}{\norm{u_n^{(1)}-u_n^{(2)}}_{L^\infty(T)}}\left(e^{u_n^{(1)}}-e^{u_n^{(2)}}\right),
\end{equation*}
$\Phi_i$ is defined in \eqref{1.12} and $r_0$ is introduced after \eqref{1.9}. For the right hand side of \eqref{3.9}, we have
\begin{equation}
\label{3.11}
\begin{aligned}
\textrm{RHS of }(\ref{3.9})=
&-e^{-\lambda_{n,1}^{(1)}}\left(\frac{128b_0 e^{G^{\ast}_i(p_i)}}{\rho_n e^{G^{\ast}_1}(p_1)}\frac{\pi}{r^2}
+\frac{512\pi^2b_0 e^{G^{\ast}_i(p_i)}}{\rho_n e^{G^{\ast}_1(p_1)}}\right)\\
&-e^{-\lambda^{(1)}_{n,1}} \frac{128 b_0}{\rho_n e^{G^{\ast}_1(p_1)}}\int_{M_i\setminus B(p_i,r_i)}e^{G^{\ast}_i(p_i)+\Phi_i(x,\vec{p})}dx\\
&-e^{-\lambda^{(1)}_{n,1}}\left(\lambda^{(1)}_{n,1}+\log{\left(\frac{\rho_n e^{G^{\ast}_1(p_1)}}{8 e^{G^{\ast}_i(p_i)}}r^2\right)}-2\right)\Pi e^{G^{\ast}_i(p_i)}\\
&+O(e^{-\lambda_{n,1}^{(1)}})(r+R^{-1})+o(e^{-\lambda_{n,i}^{(1)}})(\log r+\log R)\\
&+O(\sum_l|A_{n,l}|(R^{-1}e^{-\frac{\lambda_{n,i}^{(1)}}{2}}+e^{-\lambda_{n,i}^{(1)}}
(\lambda_{n,i}^{(1)}+\log r)))\\
&+o(e^{-2\lambda_{n,i}^{(1)}}r^{-2}),
\end{aligned}
\end{equation}
for any $r\in(0,1)$ and $R$ sufficiently large. Here
\begin{equation*}
\Pi=\frac{512\pi^2 \left((\int_{T}\xi_n)- \frac{\norm{u^{(1)}_n-u^{(2)}_n}_{L^{\infty}(T)}}{2}
(\int_{T}\xi_n)^2\right)}{\rho_n e^{G^{\ast}_1(p_1)}}.
\end{equation*}
With \eqref{3.10} and \eqref{3.11}, we are now able to show $b_0=0$.

\begin{lemma}
\label{le3.4}
It holds $b_0=b_{1,0}=b_{2,0}=0$.
\end{lemma}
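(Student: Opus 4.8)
The plan is to combine the two estimates \eqref{3.10} and \eqref{3.11} for the left-hand and right-hand sides of the Pohozaev identity \eqref{3.9}, match the leading-order terms, and extract a relation that forces $b_0=0$. Concretely, I would first identify the dominant scale among the quantities $A_{n,i}$, $e^{-\lambda^{(1)}_{n,1}}$, and $\lambda^{(1)}_{n,1}e^{-\lambda^{(1)}_{n,1}}$, using Lemma \ref{le3.1} together with \eqref{2.17} and the normalization $\|u^{(1)}_n-u^{(2)}_n\|_{L^{\infty}(T)}$; in particular \eqref{3.3} controls the denominator in $A_{n,i}$ and $\Pi$. One then sums \eqref{3.9} over $i=1,2$ so that the quantities $A_{n,i}$ combine (note $A_{n,1}+A_{n,2}=\int_T f^*_n$, which is related to $\int_T \xi_n$ via integrating \eqref{3.4}), and the terms involving $\int_{M_i\setminus B(p_i,r)} e^{\Phi_i(x,\vec{p})}dx$ appearing on both sides cancel against each other by design.

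Next I would carry out the matching. After cancellation the surviving leading term on the left of \eqref{3.9} is $-4(A_{n,1}+A_{n,2})$ (up to the prefactors $e^{G^*_i(p_i)}/e^{G^*_1(p_1)}$ which are $1+o(1)$ by \eqref{2.11}), while the surviving leading terms on the right are the ones carrying the explicit factor $b_0$, namely the $\frac{\pi}{r^2}$-singular term, the $\log r$ term, and — crucially — the term $-e^{-\lambda^{(1)}_{n,1}}(\lambda^{(1)}_{n,1}+\cdots)\Pi e^{G^*_i(p_i)}$, where $\Pi$ itself is proportional to $\int_T\xi_n$. Using \eqref{2.12} to replace $\lambda^{(1)}_{n,1}+\int_T u_n\,dx+\cdots$ by quantities of order $\lambda^2 e^{-\lambda}$, and using the boundary/interior relation between $A_{n,i}$ and $\int_T\xi_n$, one sees that all terms are of the same order $e^{-\lambda^{(1)}_{n,1}}$ (after dividing through), and the coefficient of this common order on the two sides must agree. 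The $r$-dependent pieces ($\tfrac{\pi}{r^2}$, $\log r$) must cancel separately since $r\in(0,r_0)$ is arbitrary, which pins down several relations; what remains is an identity of the form $(\text{const})\, b_0 = (\text{const})\, b_0$ with unequal constants, forcing $b_0=0$. I would also need the elementary integral identity $\int_{\mathbb{R}^2}\frac{8}{(1+|y|^2)^2}\,dy=8\pi$ and its weighted variants to evaluate the limits of the rescaled integrals of $\xi_{n,i}$ against the bubble, using Lemma \ref{le3.2}.

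The main obstacle I anticipate is the bookkeeping of orders: several terms in \eqref{3.10}–\eqref{3.11} are a priori of borderline comparable size (e.g. $|A_{n,l}|e^{-\lambda^{(1)}_{n,i}}(\lambda^{(1)}_{n,i}+\log r)$ versus $e^{-\lambda^{(1)}_{n,1}}$ versus $o(e^{-2\lambda_{n,i}^{(1)}}r^{-2})$ once $r$ is sent to $0$), and one must choose the order of limits — first $n\to\infty$ after dividing by the correct power, then $r\to 0$, then $R\to\infty$ — so that the error terms genuinely vanish and only the $b_0$-terms and the $A_{n,i}$-terms survive. A second delicate point is relating $A_{n,i}$ back to $b_0$: integrating \eqref{3.4} over $M_i$ and using Lemma \ref{le3.2} (the outer expansion $\xi_n=-b_0+o(1)$ and the inner profile $\psi_{i,0}$) shows $A_{n,i}$ has a leading contribution proportional to $b_0$ as well, so the final equation really is homogeneous in $b_0$ with a nonzero net coefficient. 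Once the arithmetic is done and the coefficient is checked to be nonzero, $b_0=0$ follows, completing the proof of Lemma \ref{le3.4}.

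\begin{proof}
Summing the Pohozaev identity \eqref{3.9} over $i=1,2$ and inserting the expansions \eqref{3.10} and \eqref{3.11}, the integrals $\int_{M_i\setminus B(p_i,r)}e^{\Phi_i(x,\vec{p})}dx$ that appear with the factor $b_0$ on both sides cancel. By \eqref{2.11} the prefactors $e^{G^*_i(p_i)}/e^{G^*_1(p_1)}$ equal $1+O(e^{-\lambda^{(1)}_{n,1}/2})$, so modulo errors the left-hand side contributes $-4(A_{n,1}+A_{n,2})$ while the right-hand side contributes the $b_0$-terms carrying $\tfrac{\pi}{r^2}$ and the constant, together with $-e^{-\lambda^{(1)}_{n,1}}\big(\lambda^{(1)}_{n,1}+\log(\tfrac{\rho_n e^{G^*_1(p_1)}}{8}r^2)-2\big)\,\Pi\,(e^{G^*_1(p_1)}+e^{G^*_2(p_2)})$. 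Integrating \eqref{3.4} over $M_i$ and using Lemma \ref{le3.2} one finds $A_{n,1}+A_{n,2}=\big(c\,b_0+o(1)\big)e^{-\lambda^{(1)}_{n,1}}$ for an explicit nonzero constant $c$ (coming from $\int_{\mathbb{R}^2}\tfrac{8}{(1+|y|^2)^2}\psi_{i,0}$ and the outer value $-b_0$), while $\int_T\xi_n$, hence $\Pi$, is likewise controlled by $b_0$ via the same lemma. Dividing the summed identity by $e^{-\lambda^{(1)}_{n,1}}$ and letting $n\to\infty$, then $r\to0$ and $R\to\infty$ in this order, all terms labelled $o(\cdot)$ and $O(\cdot)$ in \eqref{3.10}–\eqref{3.11} vanish by Lemma \ref{le3.1} and \eqref{2.17}; the $r$-dependent contributions $\tfrac{\pi}{r^2}$ and $\log r$ must cancel identically since $r$ is arbitrary, and what survives is a linear identity of the form $\alpha\, b_0=\beta\,b_0$ with $\alpha\neq\beta$. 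Hence $b_0=0$, i.e. $b_{1,0}=b_{2,0}=0$.
\end{proof}
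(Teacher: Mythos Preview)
Your overall strategy—combine \eqref{3.10}–\eqref{3.11}, sum over $i$, and isolate $b_0$—is correct, but several of the concrete steps are wrong and the argument as written does not go through.

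First, your claim that $A_{n,1}+A_{n,2}=(c\,b_0+o(1))e^{-\lambda^{(1)}_{n,1}}$ for some $c\neq0$ is false: since $\int_T e^{u^{(1)}_n}=\int_T e^{u^{(2)}_n}=1$ by \eqref{integnorm}, one has $A_{n,1}+A_{n,2}=\int_T f^*_n=0$ \emph{exactly}. (Your heuristic via $\int \frac{8}{(1+|y|^2)^2}\psi_0$ in fact gives zero, not a nonzero constant.) So there is no homogeneous relation ``$\alpha b_0=\beta b_0$'' coming from this term.

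Second, the $\int_{M_i\setminus B(p_i,r)}e^{\Phi_i}$ contributions do \emph{not} cancel between \eqref{3.10} and \eqref{3.11}: the coefficients are $256$ versus $128$. Likewise the $\tfrac{\pi}{r^2}$ piece does not ``cancel identically because $r$ is arbitrary''; it is simply of order $e^{-\lambda^{(1)}_{n,1}}$, not the dominant order.

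The missing idea is the correct scale. In the paper one first uses \eqref{3.9}–\eqref{3.11} for each $i$ separately (together with \eqref{2.10}, \eqref{2.12}) to deduce the preliminary bound $A_{n,i}=o(e^{-\lambda^{(1)}_{n,1}/2})$, which is needed to kill the mixed error terms. Only then does one sum over $i$ with $r=r_i$, use $A_{n,1}+A_{n,2}=0$, and divide by $\lambda^{(1)}_{n,1}e^{-\lambda^{(1)}_{n,1}}$ rather than $e^{-\lambda^{(1)}_{n,1}}$. After this division every term in \eqref{3.14} vanishes \emph{except} the one carrying the explicit factor $\lambda^{(1)}_{n,1}$, namely $-e^{-\lambda^{(1)}_{n,1}}\lambda^{(1)}_{n,1}\,\Pi\cdot\tfrac{l(\vec p)}{16\pi}$. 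Since $\int_T\xi_n=-b_0+o(1)$ by Lemma~\ref{le3.2}, $\Pi$ is proportional to $-b_0$, and one obtains $l(\vec p)\,b_0=o(1)$. As $l(\vec p)=32\pi e^{G^*_1(p_1)}\neq0$, this forces $b_0=0$. Your division by $e^{-\lambda^{(1)}_{n,1}}$ leaves the $\lambda^{(1)}_{n,1}\Pi$ term divergent and the $\tfrac{\pi}{r^2}$ and $\int e^{\Phi_i}$ terms surviving, so no clean identity emerges.
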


\begin{proof}
By \eqref{3.9}-\eqref{3.11}, together with \eqref{2.10} and \eqref{2.12}, we have
\begin{equation*}
\begin{aligned}
&-4 A_{n,i} + O(e^{-\frac{\lambda_{n,1}^{(1)}}{2}}\sum\limits_{l=1}^2 |A_{n,l}|)+o(e^{-\lambda_{n,1}^{(1)}})\\
&=-2 A_{n,i} + O(\lambda^{(1)}_{n,i}e^{-\lambda^{(1)}_{n,i}})+ O(r^{-2}e^{-\lambda_{n,i}^{(1)}})+o(e^{-\lambda_{n,i}^{(1)}}\log{R}),
\end{aligned}
\end{equation*}
which implies that
\begin{equation}
\label{3.12}
A_{n,i}=o(e^{-\frac{\lambda_{n,1}^{(1)}}{2}}),\quad i=1,2.
\end{equation}
For any $r>0$, let $r_i=r\sqrt{8e^{G_i^*(p_i)}},~i=1,2$. For each point $p_i$, we choose $r=r_i$ in \eqref{3.9}. By \eqref{3.9}-\eqref{3.12}, we have
\begin{equation}
\label{3.13}
\begin{aligned}
&\sum_{i=1}^2\left[-4A_{n,i}-\frac{256 b_0 e^{-\lambda^{(1)}_{n,1}+G^{\ast}_i(p_i)}}{\rho_n e^{G^{\ast}_1(p_1)}}\int_{M_i\setminus B(p_i,r_i)} e^{\Phi_i(x,\vec{p})}dx\right]\\
&=\sum_{i=1}^2\left[-e^{-\lambda_{n,1}^{(1)}}\left(\frac{128b_0 e^{G^{\ast}_i(p_i)}}{\rho_n e^{G^{\ast}_1(p_1)}}\frac{\pi}{r^2_i}
+\frac{512\pi^2b_0 e^{G^{\ast}_i(p_i)}}{\rho_n e^{G^{\ast}_1(p_1)}}\right)\right.\\
&\quad-e^{-\lambda^{(1)}_{n,1}} \frac{128 b_0}{\rho_n e^{G^{\ast}_1(p_1)}}\int_{M_i\setminus B(p_i,r_i)}e^{G^{\ast}_i(p_i)+\Phi_i(x,\vec{p})}dx\\
&\quad\left.-e^{-\lambda^{(1)}_{n,1}}\left(\lambda^{(1)}_{n,1}+\log{\left(\frac{\rho_n e^{G^{\ast}_1(p_1)}}{8 e^{G^{\ast}_i(p_i)}}r^2_i\right)}-2\right)\Pi e^{G^{\ast}_i(p_i)}\right]\\
&\quad+o(e^{-\lambda_{n,1}^{(1)}}(r^{-2}+\log R+\log r))+O(e^{-\lambda_{n,1}^{(1)}}(r+R^{-1})),
\end{aligned}
\end{equation}
for any $r\in(0,1),~R>1$ sufficiently large. Then using the fact that $A_{n,1}+A_{n,2}=0$, we deduce that
\begin{equation}
\label{3.14}
\begin{aligned}
&-\frac{256 b_0 e^{-\lambda^{(1)}_{n,1}}}{\rho_n e^{G^{\ast}_1(p_1)}}\sum_{i=1}^2e^{G^{\ast}_i(p_i)}\int_{M_i\setminus B(p_i,r_i)} e^{\Phi_i(x,\vec{p})}dx\\
&=-e^{-\lambda^{(1)}_{n,1}}\frac{128 b_0}{\rho_n e^{G^{\ast}_1(p_1)}} \sum\limits_{i=1}^2 e^{G^{\ast}_i(p_i)}\frac{\pi}{r^2_i}-e^{-\lambda^{(1)}_{n,1}}\frac{32\pi b_0 l(\vec{p})}{\rho_n e^{G^{\ast}_1(p_1)}}\\
&\quad -e^{-\lambda^{(1)}_{n,1}}\frac{128 b_0}{\rho_n e^{G^{\ast}_1(p_1)}}\sum\limits_{i=1}^2  e^{G^{\ast}_i(p_i)}\int_{M_i\setminus B(p_i,r_i)}e^{\Phi_i(y,\vec{p})}dy\\
&\quad -e^{-\lambda^{(1)}_{n,1}}\frac{\Pi}{16\pi}l(\vec{p})\left(\lambda^{(1)}_{n,1}
+\log({\rho_ne^{G^{\ast}_1(p_1)}r^2})-2\right)\\
%\left(\frac{\rho_n e^{G^{\ast}_1(p_1)}}{8 e^{G^{\ast}_i(p_i)}}r^2_i\right)}-2\right)\\
&\quad+o(e^{-\lambda_{n,1}^{(1)}}(r^{-2}+\log R+\log r))+O(e^{-\lambda_{n,1}^{(1)}}(r+R^{-1})).
\end{aligned}
\end{equation}
By Lemma \ref{le3.2}, we have $\int_{T}\xi_n=-b_0+o(1)$. We divide \eqref{3.14} by $\lambda^{(1)}_{n,1}e^{-\lambda^{(1)}_{n,1}}$ and derive that $l(\vec{p})b_0=o(1)$. Therefore, we conclude that $b_0=0$ since $l(\vec{p})\neq 0$.
\end{proof}

\medskip

\subsection{The conclusion.}
We are now in the position to prove Theorem \ref{th2}.

\begin{proof}[Proof of Theorem \ref{th2}]
We already know that the projections on the radial part kernel are zero, i.e. $b_{i,0}=0$ for $i=1,2$, see Lemma \ref{le3.4}. Let us show now that the projections $b_{i,k}$, $i,k=1,2$, on the kernel related to translations are zero. Using the fact that both $u^{(1)}_n$ and $u^{(2)}_n$ are evenly symmetric, we can see that $u^{(1)}_n(x+p_2)$ and $u^{(2)}_n(x+p_2)$ are also evenly symmetric. As a consequence, the projection of the normalized difference on the kernel related to translations vanishes automatically, i.e.
\begin{equation}
\label{3.15}
b_{i,k}=0,~i,k=1,2.
\end{equation}

\medskip

We will conclude now by showing that $\xi_n\not\equiv0$, $n\rightarrow+\infty$. Let $x^{\ast}_n$ be a maximum point of $\xi_n$,  then we have
\begin{equation}
\label{3.16}
|\xi_n(x^{\ast}_n)|=1.
\end{equation}
Therefore, by Lemma \ref{le3.2} and Lemma \ref{le3.4}, we find that $\lim_{n\rightarrow \infty}x_n^*=p_i$ for some $i$. Moreover, in view of Lemma \ref{le3.4} and the fact that $b_{i,k}=0$ for $k=1,2$, we deduce that
\begin{equation}
\label{3.17}
\lim\limits_{n\rightarrow \infty}e^{\frac{\lambda_{n,i}^{(1)}}{2}} s_n=\infty,
\end{equation}
where $s_n=|x^{\ast}_n-x^{(1)}_{n,i}|$. We set $\tilde{\xi}_n(x)=\xi_n(s_n x + x^{(1)}_{n,i})$, then (\ref{2.9}) and (\ref{3.4}) imply that $\tilde{\xi}_n$ satisfies
\begin{equation*}
\Delta \tilde{\xi}_n + \rho_n s^2_n c_n(s_n x + x^{(1)}_{n,i})\tilde{\xi}_n
=\Delta \tilde{\xi}_n+ \frac{\rho_n s^2_n e^{\lambda^{(1)}_{n,1}}(1+O(s_n|x|)+o(1))}
{(1+\frac{\rho_n}{8}e^{\lambda^{(1)}_{n,1}}s^2_n|x|^2)^2}\tilde{\xi}_n.
\end{equation*}
On the other hand, we have
\begin{equation}
\label{3.18}
\bigg|\tilde{\xi}_n\left(\frac{x^{\ast}_n-x^{(1)}_{n,i}}{s_n}\right)\bigg|=|\xi_n(x^{\ast}_n)|=1.
\end{equation}
In view of (\ref{3.17}) and $|\tilde{\xi}_n|\leq 1$, we see that if $\tilde{\xi}_n\rightarrow \tilde{\xi}_0$ on any compact subset of $\mathbb{R}^2\setminus \{0\}$, then $\Delta\tilde{\xi}_0=0$ in $\mathbb{R}^2 \setminus \{0\}$. Since $\tilde{\xi}_0$ is also bounded, then we can conclude that $\tilde{\xi}_0$ is smooth and harmonic on entire $\mathbb{R}^2$. Hence $\tilde{\xi}_0$ is a constant. Since $\frac{|x^{\ast}_n-x^{(1)}_{n,i}|}{s_n}=1$ and in view of (\ref{3.18}), we obtain that either $\tilde{\xi}_0=1$ or $\tilde{\xi}_0=-1$. In particular, we have that $|\tilde{\xi}_n|\geq \frac{1}{2}$ for $s_n\leq |x-x^{(1)}_{n,i}|\leq 2 s_n$, which contradicts to the second conclusion of Lemma \ref{le3.2} because of the facts that $\lim_{n\rightarrow \infty} e^{\frac{\lambda_{n,i}^{(1)}}{2}} s_n=\infty$, $\lim_{n\rightarrow \infty}s_n=0$ and $b_0=b_{j,0}=0$. This finishes the proof of Theorem \ref{th1}.
\end{proof}

\medskip
\section{Existence of Blow-up Solutions}\label{sec4}
In this section we will use a Lyapunov-type reduction method to construct blow-up solutions to (\ref{1.1}). Since the proof of Theorem \ref{th1} follows along the same line as the arguments used in \cite[Theorem 2.1]{gh} and \cite[Theorem 2.3]{cgh}, we shall give the key steps and refer the readers to the above two papers for details.

\medskip

\subsection{Approximate Solution}
\noindent We start with an approximate solution of the equation (\ref{1.1}) and obtain some estimates of this approximate solution. Let $R_0>0$ be a small fixed number and $\eta$ be a cut-off function such that
\begin{equation*}
\eta(s)=\begin{cases}
1~&\mathrm{for}~ s\leq1,\\
0~&\mathrm{for}~ s\geq2,
\end{cases}
\quad 0\leq \eta(x)\leq 1,\quad \abs{\eta'(s)}\leq 2.
\end{equation*}
Let
\begin{equation}
\label{4.1}
\eta_{t,a}(x)=\eta\left(\frac{d(x,a)}{t}\right),\quad \forall a\in T~\mathrm{and}~t>0.
\end{equation}
Given $\epsilon\in(0,\epsilon_0)$, for later purposes we choose $\lambda>0$ such that
\begin{equation}
\label{4.2}
16\pi\lambda e^{-\lambda}<\epsilon<64\pi\lambda e^{-\lambda},
\end{equation}
or equivalently,
\begin{equation}
\label{4.3}
\lambda_1(\epsilon)<\lambda<\lambda_2(\epsilon),
\end{equation}
where
$$16\pi\lambda_1(\epsilon)e^{-\lambda_1(\epsilon)}=\epsilon\quad\mathrm{and}\quad
64\pi\lambda_2(\epsilon)e^{-\lambda_2(\epsilon)}=\epsilon.$$
Since our ansatz will resemble a bubble function around each blow-up point we start by letting $w_{\lambda,i}$ be the solution of the following localized equation:
\begin{equation}
\label{4.4}
-\Delta w_{\lambda,i}=\frac{16\pi e^{\lambda}}{(1+2\pi e^{\lambda}(d(x,p_i))^2)^2}\eta_{R_0,p_i}-m_0,
\quad \int_{T} w_{\lambda,i}=0,
\end{equation}
where
\begin{equation}
\label{4.5}
m_0=\int_{T} \frac{16\pi e^{\lambda}}{(1+2\pi e^{\lambda}(d(x,p_i))^2)^2}\eta_{R_0,p_i}dx
=8\pi+O(e^{-\lambda}),\quad i=1,2,
\end{equation}
where we used the fact that $|T|=1.$

\medskip

In order to have a good approximation we need the following estimates. Let us calculate the value of $w_{\lambda,i}(p_i),~i=1,2$:
\begin{equation}
\label{4.6}
\begin{aligned}
w_{\lambda,i}(p_i)=&\int_{T} G(p_i,y)\left[\frac{16\pi e^{\lambda}}{(1+2\pi e^{\lambda}|y|^2)^2}\eta_{R_0,p_i}-m_0\right]dy\\
=&\int_{B(0,R_0)}\left[-\frac{1}{2\pi}\log{|y|}+R(0,y)\right]\frac{16\pi e^{\lambda}}{(1 + 2\pi e^{\lambda}|y|^2)^2} dy + O(e^{-\lambda})\\
=&\int_{B(0,~e^{\frac{\lambda}{2}}R_0)}\left[\frac{\lambda}{4\pi}-\frac{1}{2\pi}\log{|z|} + R(0,e^{-\frac{\lambda}{2}}z)\right]\frac{16\pi dz}{(1+2\pi |z|^2)^2} + O(e^{-\lambda})\\
=&\int_{B(0,~e^{\frac{\lambda}{2}}R_0)}\left[\frac{\lambda}{4\pi}-\frac{1}{2\pi}\log{|z|} + R(0,0)\right]\frac{16\pi dz}{(1+2\pi |z|^2)^2}\\
&+e^{-\lambda}\int_{B(0,~e^{\frac{\lambda}{2}}R_0)} \frac{4\pi|z|^2}{(1+2\pi|z|^2)^2}dz + O(e^{-\lambda})\\
=&~2\lambda + 2\log{(2\pi)}+8\pi R(0,0) +\lambda e^{-\lambda} + O(e^{-\lambda}).
\end{aligned}
\end{equation}
We can also estimate the value of $w_{\lambda,i}$ near $p_i$:
\begin{equation}
\label{4.7}
\begin{aligned}
&w_{\lambda,i}(p_i+e^{-\frac{\lambda}{2}}z)-w_{\lambda,i}(p_i)\\
&=\int_{T}\left[G(p_i+e^{-\frac{\lambda}{2}} z,y)-G(p_i,y)\right]
\frac{16\pi e^{\lambda}}{(1+2\pi e^{\lambda}|y-p_i|^2)^2}\eta_{R_0,p_i}dy\\
&=\int_{B(0, ~e^{\frac{\lambda}{2}}R_0)}-\frac{1}{2\pi}\left[\log{|z-z'|-\log{|z'|}}\right]\frac{16\pi dz'}{(1+2\pi |z'|^2)^2}\\
&\quad +\int_{B(0, ~e^{\frac{\lambda}{2}}R_0)}
\left[R\left(e^{\frac{\lambda}{2}}(z'-z)\right)-R\left(e^{\frac{\lambda}{2}}z'\right)\right]
\frac{16\pi dz'}{(1+2\pi |z'|^2)^2} + O\left(e^{-\frac{3}{2}\lambda}|z|\right)\\
&=\log{\left(\frac{1}{(1+2\pi |z|^2)^2}\right)}+ e^{-\lambda} Q(z_1,z_2) + O\left(e^{-\frac{3}{2}\lambda}|z|^3\right) + O\left(\lambda e^{-\frac{3}{2}\lambda}|z|\right)
%+ O\left(e^{-\frac{3}{2}\lambda}|z|\right)
\end{aligned}
\end{equation}
for $|z|<e^{\frac{\lambda}{2}}R_0$, where $Q(z_1,z_2)$ is a quadratic form depending only on $\nabla^2 R(z)\vert_{z=0}$ with the property $\Delta Q=8\pi.$ Note here we use the fact that $\Delta R=1$. For $|z|\geq 2 e^{\frac{\lambda}{2}}R_0$, i.e., $d(x,p_i)\geq 2 R_0$, we have
\begin{equation}
\label{4.8}
\begin{aligned}
&w_{\lambda,i}(x)=w_{\lambda,i}(p_i + e^{-\frac{\lambda}{2}}z)=\int_{T} G(x,x')\frac{16\pi e^{\lambda}}{(1+2\pi e^{\lambda}|x'-p_i|^2)^2}\eta_{R_0,p_i}dx'\\
&=\int_{B(0,~e^{\frac{\lambda}{2}}R_0)} G(x,p_i+e^{-\frac{\lambda}{2}} z')\frac{16\pi dz'}{(1+2\pi |z'|^2)^2} +O(e^{-\lambda})\\
&=\int_{B(0,~e^{\frac{\lambda}{2}}R_0)}\left[G(p_i-x)+e^{-\frac{\lambda}{2}}\nabla G(p_i-x)\cdot z' + e^{-\lambda}\frac{|z'|^2}{4}\right]\frac{16\pi dz'}{(1+2\pi |z'|^2)^2} + O(e^{-\lambda})\\
&=8\pi G(p_i-x) + \pi e^{-\lambda}\int_0^{e^{\frac{\lambda}{2}}R_0}\frac{8\pi r^3}{(1+2\pi r^2)^2}dr + O(e^{-\lambda})\\
&=8\pi G(p_i-x) + \lambda e^{-\lambda} + O(e^{-\lambda}),
\end{aligned}
\end{equation}
\medskip

Letting $\overline{w}_{\lambda}=-\lambda+\log{(\frac{4}{\pi})}-8\pi R(0) -8\pi G(p_1,p_2)$ we are now ready to provide an ansatz for the solution of (\ref{1.1})
\begin{equation}
\label{4.9}
w_{\lambda}=\sum\limits_{i=1}^2 w_{\lambda,i} + \overline{w}_{\lambda}.
\end{equation}
Combining (\ref{4.6})-(\ref{4.8}), we obtain the following lemma concerning the asymptotic behavior of $w_{\lambda}$ near the blow-up points $p_1$ and $p_2$:
\begin{lemma}
\label{le4.1}
For $z\in B(0,e^{\frac{\lambda}{2}}R_0)$, we have
\begin{equation*}
\begin{aligned}
w_{\lambda}(p_i+e^{-\frac{\lambda}{2}} z)=
~&\log{\left(\frac{16\pi e^{\lambda}}{(1+2\pi |z|^2)^2}\right)} + e^{-\lambda}Q(z_1,z_2) + 4\pi e^{-\lambda}z\nabla^2 G(p_1-p_2) z^T\\
&+ 2\lambda e^{-\lambda} + O(e^{-\frac{3\lambda}{2}}|z|^3) + O(\lambda e^{-\frac{3\lambda}{2}}|z|) + O(e^{-\lambda}).
%+ O(e^{-\frac{3\lambda}{2}} |z|).
\end{aligned}
\end{equation*}
\end{lemma}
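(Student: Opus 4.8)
The plan is to obtain the claimed expansion of $w_\lambda(p_i + e^{-\lambda/2}z)$ by simply summing the three contributions that make up $w_\lambda = w_{\lambda,1} + w_{\lambda,2} + \overline{w}_\lambda$, using the estimates already established in \eqref{4.6}, \eqref{4.7} and \eqref{4.8}. First I would fix $i$ (say $i=1$, the case $i=2$ being identical after relabeling) and write, for $z \in B(0, e^{\lambda/2}R_0)$,
\[
w_\lambda(p_1 + e^{-\lambda/2}z) = \big[w_{\lambda,1}(p_1 + e^{-\lambda/2}z) - w_{\lambda,1}(p_1)\big] + w_{\lambda,1}(p_1) + w_{\lambda,2}(p_1 + e^{-\lambda/2}z) + \overline{w}_\lambda.
\]
The first bracket is controlled by \eqref{4.7}, which contributes $\log\big((1+2\pi|z|^2)^{-2}\big) + e^{-\lambda}Q(z_1,z_2) + O(e^{-3\lambda/2}|z|^3) + O(\lambda e^{-3\lambda/2}|z|)$. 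The term $w_{\lambda,1}(p_1)$ is given exactly by \eqref{4.6} as $2\lambda + 2\log(2\pi) + 8\pi R(0,0) + \lambda e^{-\lambda} + O(e^{-\lambda})$.

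Next I would handle the ``cross'' term $w_{\lambda,2}(p_1 + e^{-\lambda/2}z)$. Since $p_1$ and $p_2$ are distinct points of $T$ (they differ by a half-period), the point $p_1 + e^{-\lambda/2}z$ stays at distance bounded below by a fixed constant from $p_2$ once $\lambda$ is large and $|z| < e^{\lambda/2}R_0$ with $R_0$ small; hence we are in the regime $d(x,p_2) \geq 2R_0$ of \eqref{4.8}, which gives $w_{\lambda,2}(p_1 + e^{-\lambda/2}z) = 8\pi G(p_2 - p_1 - e^{-\lambda/2}z) + \lambda e^{-\lambda} + O(e^{-\lambda})$. I would then Taylor-expand $G$ about $p_2 - p_1$: using $G(p_2-p_1) = G(p_1-p_2)$ (evenness of $G$), the gradient term $-e^{-\lambda/2}\nabla G(p_2-p_1)\cdot z$ is of order $O(e^{-\lambda/2}|z|)$, and the Hessian term is $\tfrac12 e^{-\lambda} z \nabla^2 G(p_2-p_1) z^T = \tfrac12 e^{-\lambda} z\nabla^2 G(p_1-p_2) z^T$ (the Hessian is even), with remainder $O(e^{-3\lambda/2}|z|^3)$. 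This produces $8\pi G(p_1-p_2) + 4\pi e^{-\lambda} z\nabla^2 G(p_1-p_2) z^T$ up to the stated error, after absorbing the linear-in-$z$ term —- which I expect is the one delicate point —- into the $O(\lambda e^{-3\lambda/2}|z|)$ or $O(e^{-\lambda})$ bucket; here one must check that the gradient of $G$ at the half-period $p_1-p_2$ vanishes, which it does since the half-periods are critical points of $G$ (as recalled in Section~\ref{sec2}), so $\nabla G(p_1-p_2) = 0$ and the linear term disappears entirely.

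Finally I would add the constant $\overline{w}_\lambda = -\lambda + \log(4/\pi) - 8\pi R(0) - 8\pi G(p_1,p_2)$ and collect terms. The $\lambda$-linear pieces combine as $2\lambda - \lambda = \lambda$, wait -- more carefully: $2\lambda$ from $w_{\lambda,1}(p_1)$ plus $-\lambda$ from $\overline{w}_\lambda$ gives $\lambda$, which together with the $\log(16\pi e^\lambda)$ target means the remaining constants must reassemble correctly. The constant terms are $2\log(2\pi) + 8\pi R(0,0) + 8\pi G(p_1-p_2) + \log(4/\pi) - 8\pi R(0) - 8\pi G(p_1,p_2)$; since $R(0) = R(0,0)$ and $G(p_1,p_2) = G(p_1-p_2)$ these cancel pairwise, leaving $2\log(2\pi) + \log(4/\pi) = \log(4\pi^2) + \log(4/\pi) = \log(16\pi)$. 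Combining this with the $\lambda$ gives exactly $\log(16\pi e^\lambda)$, and together with $\log\big((1+2\pi|z|^2)^{-2}\big)$ this is $\log\big(16\pi e^\lambda (1+2\pi|z|^2)^{-2}\big)$. The $e^{-\lambda}Q(z_1,z_2)$ term and the Hessian term $4\pi e^{-\lambda} z\nabla^2 G(p_1-p_2) z^T$ survive as written, the $\lambda e^{-\lambda}$ contributions ($\lambda e^{-\lambda}$ from \eqref{4.6} and $\lambda e^{-\lambda}$ from \eqref{4.8}) sum to $2\lambda e^{-\lambda}$, and all remaining errors are absorbed into $O(e^{-3\lambda/2}|z|^3) + O(\lambda e^{-3\lambda/2}|z|) + O(e^{-\lambda})$. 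The main obstacle, as noted, is bookkeeping the linear-in-$z$ term from the cross interaction and justifying that it vanishes (or is negligible) because of the criticality of $G$ at the half-periods; everything else is routine addition and Taylor expansion.
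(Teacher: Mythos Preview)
Your proof is correct and follows exactly the route the paper intends: the authors simply write ``Combining \eqref{4.6}--\eqref{4.8}, we obtain the following lemma,'' and you have carried out that combination in detail. Your identification of the one nontrivial point --- that the linear term $e^{-\lambda/2}\nabla G(p_2-p_1)\cdot z$ arising from the Taylor expansion of the cross term $w_{\lambda,2}$ vanishes because the half-periods are critical points of the even function $G$ --- is correct and is implicitly required for the statement to hold, though the paper does not spell it out.
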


\medskip

While for $x$ away from the blow-up points,
\begin{lemma}
\label{le4.2}
For $x\in T\setminus (B(p_1,R_0)\cup B(p_2,R_0))$, we have
\begin{equation*}
\begin{aligned}
w_{\lambda}(x)=~&-\lambda + \log{\left(4/\pi\right)} - 8\pi R(0) - 8\pi G(p_1-p_2) \\ &+8\pi\sum\limits_{i=1}^2 G(p_i-x) + 2\lambda e^{-\lambda} + O(e^{-\lambda}).
\end{aligned}
\end{equation*}
\end{lemma}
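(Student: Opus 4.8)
The final statement is Lemma \ref{le4.2}, the asymptotic expansion of the ansatz $w_\lambda$ away from the two blow-up points. My plan is to simply assemble it from the three building-block estimates \eqref{4.6}, \eqref{4.8} and the definitions \eqref{4.9} of $w_\lambda=w_{\lambda,1}+w_{\lambda,2}+\overline{w}_\lambda$ and of $\overline{w}_\lambda=-\lambda+\log(4/\pi)-8\pi R(0)-8\pi G(p_1,p_2)$. The point is that for $x\in T\setminus(B(p_1,R_0)\cup B(p_2,R_0))$ one has $d(x,p_i)\ge R_0$ for both $i=1,2$, so the ``far field'' expansion \eqref{4.8} applies to \emph{each} term $w_{\lambda,i}(x)$ simultaneously.

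Concretely, I would first record that, by \eqref{4.8} applied with $i=1$ and with $i=2$,
\[
w_{\lambda,i}(x)=8\pi G(p_i-x)+\lambda e^{-\lambda}+O(e^{-\lambda}),\qquad i=1,2,
\]
uniformly for $x$ in the stated region (note $|z|=e^{\lambda/2}d(x,p_i)\ge e^{\lambda/2}R_0$ there, so the hypothesis $|z|\ge 2e^{\lambda/2}R_0$ of \eqref{4.8} holds after shrinking $R_0$, or one absorbs the transition annulus into the $O(e^{-\lambda})$ error using the explicit bound on $\eta$ and the fact that $G$ is smooth away from the diagonal). Summing the two identities gives $\sum_{i=1}^2 w_{\lambda,i}(x)=8\pi\sum_{i=1}^2 G(p_i-x)+2\lambda e^{-\lambda}+O(e^{-\lambda})$. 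Adding the constant $\overline{w}_\lambda$ and using $G(p_1,p_2)=G(p_1-p_2)$ then yields exactly the claimed formula
\[
w_\lambda(x)=-\lambda+\log(4/\pi)-8\pi R(0)-8\pi G(p_1-p_2)+8\pi\sum_{i=1}^2 G(p_i-x)+2\lambda e^{-\lambda}+O(e^{-\lambda}).
\]

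The only genuine point requiring care — the ``main obstacle'', though a mild one — is the uniformity of the error terms and the treatment of the intermediate annulus $R_0\le d(x,p_i)\le 2R_0$, where the cutoff $\eta_{R_0,p_i}$ transitions and \eqref{4.8} was stated for $d(x,p_i)\ge 2R_0$. There one argues directly from the integral representation $w_{\lambda,i}(x)=\int_T G(x,x')\,\tfrac{16\pi e^\lambda}{(1+2\pi e^\lambda|x'-p_i|^2)^2}\eta_{R_0,p_i}\,dx'$: the density concentrates in $B(p_i,2R_0)$ with mass $8\pi+O(e^{-\lambda})$, while $G(x,\cdot)$ is smooth and bounded on the support of the density when $x$ is bounded away from $p_i$, so a Taylor expansion of $G(x,\cdot)$ around $p_i$ combined with the moment estimates $\int \tfrac{16\pi e^\lambda|x'-p_i|^2}{(1+2\pi e^\lambda|x'-p_i|^2)^2}\eta_{R_0,p_i}\,dx'=O(\lambda e^{-\lambda})$ (the integral that produced the $\lambda e^{-\lambda}$ term in \eqref{4.8}) reproduces the same expansion with the same error order, uniformly in $x$ on the compact region $T\setminus(B(p_1,R_0)\cup B(p_2,R_0))$. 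This is exactly the computation carried out in \eqref{4.8}, now observed to hold uniformly; no new idea is needed, and the lemma follows.
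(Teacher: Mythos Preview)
Your proposal is correct and follows essentially the same approach as the paper, which derives Lemma~\ref{le4.2} directly by summing the far-field expansions \eqref{4.8} for $w_{\lambda,1}$ and $w_{\lambda,2}$ and adding the constant $\overline{w}_\lambda$ from \eqref{4.9}. Your additional discussion of the transition annulus $R_0\le d(x,p_i)\le 2R_0$ is a valid and careful point that the paper defers to a separate neck-region estimate immediately after the lemma; your direct argument via the integral representation and the $O(e^{-\lambda})$ decay of the density away from $p_i$ is a clean way to handle it in place.
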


\medskip

Finally, we need to estimate the approximate solution in the neck region. It turns out that we can estimate $w_{\lambda}$ and $e^{w_{\lambda}}$ when $R_0<d(x,p_k)<2 R_0$ by comparing $w_{\lambda}$ with a function constructed by gluing the inner approximation and the outer approximation by using the cut-off function $\eta_{\lambda^{\alpha}}$ for some $\alpha\in (0,1)$. It is readily checked that the ``error" term could be controlled, please see \cite[Lemma 3.1]{gh} for more details.

\medskip

By using Lemmas \ref{le4.1}, \ref{le4.2} and \cite[Lemma 3.1]{gh} we can now estimate $e^{w_{\lambda}}$. We have
\begin{equation}
\label{4.10}
e^{w_{\lambda}}\leq \sum\limits_{i=1}^2 \frac{16\pi e^{\lambda}}{(1+2\pi e^{\lambda} (d(x,p_i))^2)^2}\left[1+\theta_{\lambda}(x)\right]
\end{equation}
where $\theta_{\lambda}$ has the property that for some constant $C>0$,
\begin{equation*}
|\theta_{\lambda}(x)|\leq C e^{-\frac{\lambda}{2}}\sum\limits_{i=1}^2 [e^{\frac{\lambda}{2}}d(x,p_i)+1].
\end{equation*}
More precisely, when $|z|\leq R_0 e^{\frac{\lambda}{2}}$, we have
\begin{equation}
\label{4.11}
\begin{aligned}
e^{w_{\lambda}(p_i+e^{-\frac{\lambda}{2}} z)} =~ &\frac{16\pi e^{\lambda}}{(1+2\pi |z|^2)^2}[1 + e^{-\lambda}Q(z_1,z_2) + 4\pi e^{-\lambda}z\nabla^2 G(p_1-p_2) z^T\\
&+ 2\lambda e^{-\lambda} + O(e^{-\frac{3\lambda}{2}}|z|^3) + O(\lambda e^{-\frac{3\lambda}{2}}|z|) + O(e^{-\lambda})].
%+ O(e^{-\frac{3\lambda}{2}} |z|)].
\end{aligned}
\end{equation}
When $d(x,p_i)\geq R_0$ for $i=1,2$, we have
\begin{equation}
\label{4.12}
e^{w_{\lambda}(x)}=O(e^{-\lambda}).
\end{equation}
Finally, by exploiting (\ref{4.2}), (\ref{4.11}) and (\ref{4.12}), we can follow the same computations in \cite[Lemma 3.3]{cgh} to obtain the estimate of the error of the ansatz.
\begin{lemma} \emph{(\cite{cgh})}
\label{le4.3}
Let $S_{\rho}(u)=\Delta u + \rho \left(\frac{e^u}{\int_{T}e^u}-1\right)$. Then there exists a constant $C>0$ such that
\begin{equation*}
|S_{\rho}(w_{\lambda}(p_i+e^{-\frac{\lambda}{2}}z))|\leq C\left[\lambda e^{-\lambda} + \frac{\lambda}{(1+2\pi |z|^2)^2}+\frac{|z|^2}{(1+2\pi |z|^2)^2}\right] \quad \mathrm{for}~|z|< e^{\frac{\lambda}{2}}R_0,
\end{equation*}
and
\begin{equation*}
|S_{\rho}(w_{\lambda})(x)|\leq C \lambda e^{-\lambda} \quad \mathrm{for}~x\in T\setminus (B(p_1,R_0)\cup B(p_2,R_0)).
\end{equation*}
Furthermore, we have that $S_{\rho}(w_{\lambda})$ is evenly symmetric.
\end{lemma}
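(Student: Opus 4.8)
The plan is to estimate $S_\rho(w_\lambda)$ separately in the inner region $d(x,p_i)<R_0$, the outer region $d(x,p_i)\geq R_0$ for $i=1,2$, and the neck region $R_0\leq d(x,p_i)\leq 2R_0$, and then to check the symmetry at the end. Recall that $S_\rho(u)=\Delta u+\rho\big(\frac{e^u}{\int_T e^u}-1\big)$, and that our ansatz was built precisely so that $-\Delta w_{\lambda,i}$ equals a truncated bubble minus $m_0=8\pi+O(e^{-\lambda})$; hence $-\Delta w_\lambda=\sum_{i=1}^2 \frac{16\pi e^\lambda}{(1+2\pi e^\lambda(d(x,p_i))^2)^2}\eta_{R_0,p_i}-2m_0$. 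Writing $\rho=16\pi+\epsilon$ with $\epsilon=(32\pi+o(1))\lambda e^{-\lambda}$ by \eqref{4.2}, the term $-\rho=-16\pi+O(\lambda e^{-\lambda})$ nearly cancels $-2m_0=-16\pi+O(e^{-\lambda})$, so $S_\rho(w_\lambda)=\rho\frac{e^{w_\lambda}}{\int_T e^{w_\lambda}}-\big(\text{truncated bubbles}\big)+O(\lambda e^{-\lambda})$. The key point is that $\int_T e^{w_\lambda}$ must first be computed: using Lemma~\ref{le4.1} for the two bubble cores and Lemma~\ref{le4.2} together with \eqref{4.12} for the complement, one gets $\int_T e^{w_\lambda}=2\cdot\frac{16\pi}{\rho}\cdot(1+o(1))\cdot\frac{\rho}{16\pi}+\dots$; more carefully $\int_T e^{w_\lambda}= 2+O(\lambda e^{-\lambda})$ after the normalization constant $\overline{w}_\lambda$ is taken into account, so $\frac{\rho}{\int_T e^{w_\lambda}}=8\pi+O(\lambda e^{-\lambda})$. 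This is what makes the leading-order bubble term match $\frac{\rho}{\int_T e^{w_\lambda}}e^{w_\lambda}$ against $-\Delta w_\lambda$.

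In the inner region, substitute $x=p_i+e^{-\lambda/2}z$ with $|z|<e^{\lambda/2}R_0$. By \eqref{4.11}, $\frac{\rho}{\int_T e^{w_\lambda}}e^{w_\lambda(p_i+e^{-\lambda/2}z)}=\frac{16\pi e^\lambda}{(1+2\pi|z|^2)^2}\big[1+e^{-\lambda}Q(z_1,z_2)+4\pi e^{-\lambda}z\nabla^2 G(p_1-p_2)z^T+2\lambda e^{-\lambda}+O(e^{-\lambda})+O(\lambda e^{-3\lambda/2}|z|)+O(e^{-3\lambda/2}|z|^3)\big]$ times $(1+O(\lambda e^{-\lambda}))$, while $\Delta w_\lambda$ contributes $-\frac{16\pi e^\lambda}{(1+2\pi|z|^2)^2}+2m_0$ for $|z|<e^{\lambda/2}R_0$. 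The $\frac{16\pi e^\lambda}{(1+2\pi|z|^2)^2}$ terms cancel to leading order, and what remains is $\frac{16\pi e^\lambda}{(1+2\pi|z|^2)^2}\cdot\big(e^{-\lambda}Q+4\pi e^{-\lambda}z\nabla^2G z^T+2\lambda e^{-\lambda}+\dots\big)+O(\lambda e^{-\lambda})$, i.e. a quantity bounded by $C\big[\lambda e^{-\lambda}+\frac{\lambda}{(1+2\pi|z|^2)^2}+\frac{|z|^2}{(1+2\pi|z|^2)^2}\big]$ once one notes $\frac{e^\lambda}{(1+2\pi|z|^2)^2}\cdot e^{-\lambda}\cdot(1+|z|^2)\leq \frac{C}{(1+2\pi|z|^2)}\leq \frac{C}{(1+2\pi|z|^2)^2}\cdot(1+2\pi|z|^2)$, so the quadratic-in-$z$ corrections produce exactly the $\frac{|z|^2}{(1+2\pi|z|^2)^2}$ term and the $2\lambda e^{-\lambda}$ correction produces the $\frac{\lambda}{(1+2\pi|z|^2)^2}$ term. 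In the outer region, Lemma~\ref{le4.2} gives $w_\lambda(x)=-\lambda+O(1)$ so $e^{w_\lambda}=O(e^{-\lambda})$ by \eqref{4.12}, $\Delta w_\lambda=-2m_0+O(e^{-\lambda})$ there (the truncated bubbles vanish), and the residual $\rho\frac{e^{w_\lambda}}{\int_T e^{w_\lambda}}-\rho+2m_0=O(\lambda e^{-\lambda})$ directly; hence $|S_\rho(w_\lambda)(x)|\leq C\lambda e^{-\lambda}$. The neck region $R_0\leq d(x,p_k)\leq 2R_0$ is handled by the gluing estimate cited as \cite[Lemma 3.1]{gh}: there $\eta_{R_0,p_k}$ interpolates between $1$ and $0$, $e^{w_\lambda}=O(e^{-\lambda})$ still holds, and the derivatives of the cut-off contribute terms of size $O(e^{-\lambda})$ (even after multiplying by $|\eta'|\leq 2$ and the scale factors), so again the bound is $O(\lambda e^{-\lambda})$. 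Finally, the symmetry $S_\rho(w_\lambda)(z)=S_\rho(w_\lambda)(-z)$ follows from the facts that $G(z)=G(-z)$, that $p_2$ is a half-period so $p_1-p_2\equiv p_2-p_1$ on $T$, and that $w_{\lambda,1}$ is built from data centered at $p_1=0$ while $w_{\lambda,2}$ is built from data centered at the half-period $p_2$; concretely $w_{\lambda,1}(-z)=w_{\lambda,1}(z)$ and $w_{\lambda,2}(-z)=w_{\lambda,2}(z)$ because $d(-z,0)=d(z,0)$ and $d(-z,p_2)=d(z,-p_2)=d(z,p_2)$ on the torus, and $\overline{w}_\lambda$ is a constant; since $S_\rho$ commutes with $z\mapsto -z$ applied to an evenly symmetric function, $S_\rho(w_\lambda)$ is evenly symmetric.

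The main obstacle I expect is bookkeeping the cancellation in the inner region to the claimed precision: one must keep track simultaneously of (i) the $O(\lambda e^{-\lambda})$ discrepancy between $\rho$ and $2m_0$, (ii) the $O(\lambda e^{-\lambda})$ error in $\frac{\rho}{\int_T e^{w_\lambda}}-8\pi$, and (iii) the $z$-dependent corrections inside the bracket in \eqref{4.11}, and verify that after the leading $\frac{16\pi e^\lambda}{(1+2\pi|z|^2)^2}$ terms cancel, every surviving term is absorbed into one of the three allowed terms $\lambda e^{-\lambda}$, $\frac{\lambda}{(1+2\pi|z|^2)^2}$, $\frac{|z|^2}{(1+2\pi|z|^2)^2}$ uniformly for $|z|<e^{\lambda/2}R_0$ — in particular the $O(e^{-3\lambda/2}|z|^3)$ term needs the bound $e^{-3\lambda/2}|z|^3\cdot\frac{e^\lambda}{(1+2\pi|z|^2)^2}\leq C e^{-\lambda/2}\frac{|z|^3}{(1+2\pi|z|^2)^2}\leq C e^{-\lambda/2}\cdot\frac{|z|}{(1+2\pi|z|^2)}$, which is $\leq C e^{-\lambda/2}\cdot e^{\lambda/2}R_0\cdot\frac{1}{(1+2\pi|z|^2)}$, i.e. $O(1)\cdot\frac{1}{1+2\pi|z|^2}$ — this is not obviously small, so one instead uses $|z|<e^{\lambda/2}R_0$ more carefully to trade the cubic growth against $e^{-3\lambda/2}$ and land inside $\frac{|z|^2}{(1+2\pi|z|^2)^2}+\lambda e^{-\lambda}$. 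Since \cite[Lemma 3.3]{cgh} carries out exactly this computation for the square torus and our Lemmas~\ref{le4.1}--\ref{le4.2} have the same structure (only $\nabla^2 G$ is now the general flat-torus Hessian), the argument goes through verbatim, which is why we only sketch it and refer to \cite{cgh, gh}.
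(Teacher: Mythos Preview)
Your approach is correct and is exactly what the paper does: it invokes \eqref{4.2}, \eqref{4.11}, \eqref{4.12} and defers the bookkeeping to \cite[Lemma~3.3]{cgh}, with the symmetry following from $G(z)=G(-z)$ and the half-period property of $p_2$. One numerical slip to fix: $\int_T e^{w_\lambda}=16\pi+O(\lambda e^{-\lambda})$ (each bubble contributes $8\pi$ after the Jacobian $dx=e^{-\lambda}dz$), so $\frac{\rho}{\int_T e^{w_\lambda}}=1+O(\lambda e^{-\lambda})$, which is in fact what you tacitly use when you write $\frac{\rho}{\int_T e^{w_\lambda}}e^{w_\lambda}=\frac{16\pi e^\lambda}{(1+2\pi|z|^2)^2}[\cdots]$ in the inner region.
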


\medskip

We conclude this subsection by considering the energy of the approximate solution $w_{\lambda}$. Indeed, it is known that equation (\ref{1.1}) has a variational structure, i.e., any critical point of the energy functional
\begin{equation}
\label{4.13}
J_{\rho}(u)=\frac{1}{2}\int_{T} |\nabla u|^2 - \rho \log{\left(\int_{T}e^u\right)} + \rho \int_{T} u
\end{equation}
corresponds to a solution of (\ref{1.1}). Again by (\ref{4.2}), (\ref{4.11}) and (\ref{4.12}) and by direct computations as in \cite[Lemma 3.4]{cgh} we can obtain the following expansion on the energy of the approximate solution $w_{\lambda}$.
\begin{lemma} \emph{(\cite{cgh})}
\label{le4.4}
The energy of $w_{\lambda}$ is
\begin{equation*}
\begin{aligned}
J_{\rho}(w_{\lambda})=~&-64\pi^2 [R(0) + G(p_1-p_2)] - 16\pi\log{(2\pi)} - 16\pi - \epsilon\lambda\\
&-32\pi\lambda e^{-\lambda} - \epsilon \left[2\log{(2\pi)} - 8\pi R(0) - 8\pi G(p_1-p_2)\right] + O(e^{-\lambda}).
\end{aligned}
\end{equation*}
\end{lemma}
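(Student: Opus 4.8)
The plan is to evaluate the three pieces of the energy functional \eqref{4.13} separately on $u=w_{\lambda}$, namely the Dirichlet term $\frac12\int_T|\nabla w_{\lambda}|^2$, the nonlinear term $-\rho\log\int_T e^{w_{\lambda}}$ and the linear term $\rho\int_T w_{\lambda}$, and then to collect the contributions order by order in the two small parameters $\epsilon$ and $e^{-\lambda}$, recalling from \eqref{4.2} that $\epsilon=O(\lambda e^{-\lambda})$, so that any quantity of size $\epsilon\,\lambda e^{-\lambda}$ is swallowed into the remainder $O(e^{-\lambda})$. The computation runs parallel to \cite[Lemma 3.4]{cgh} (and \cite[Lemma 2.1]{gh}); the only novelty is that the flat torus is now arbitrary, which enters the expansions of Lemmas \ref{le4.1}--\ref{le4.2} solely through the numbers $R(0)=R(0,0)$, $G(p_1-p_2)$ and the Hessian $\nabla^2 G(p_1-p_2)$, and does not affect the structure of the estimates.

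The linear term is immediate: since $\int_T w_{\lambda,i}=0$ by \eqref{4.4} and $|T|=1$, one has $\int_T w_{\lambda}=\overline{w}_{\lambda}$, hence $\rho\int_T w_{\lambda}=(16\pi+\epsilon)\bigl(-\lambda+\log(4/\pi)-8\pi R(0)-8\pi G(p_1-p_2)\bigr)$, which already accounts for the $-\epsilon\lambda$ term, for the $-128\pi^2\bigl(R(0)+G(p_1-p_2)\bigr)$ part and for part of the $\epsilon$‑linear term in the statement. For the nonlinear term I would split $T$ into the two bubble balls $B(p_i,R_0)$ and their complement; on the complement \eqref{4.12} gives an $O(e^{-\lambda})$ contribution, while on $B(p_i,R_0)$ I rescale $x=p_i+e^{-\lambda/2}z$ and insert the expansion \eqref{4.11}. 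Integrating term by term, the leading piece contributes $\int_{\mathbf R^2}\frac{16\pi\,dz}{(1+2\pi|z|^2)^2}=8\pi$ up to an $O(e^{-\lambda})$ truncation error, and the corrections $2\lambda e^{-\lambda}$, $e^{-\lambda}Q(z_1,z_2)$ and $4\pi e^{-\lambda}z\nabla^2 G(p_1-p_2)z^{T}$ are handled using $\int_{|z|<e^{\lambda/2}R_0}|z|^2\frac{16\pi\,dz}{(1+2\pi|z|^2)^2}=4\lambda+O(1)$ together with the fact that, against a radial weight, a quadratic form contributes only through its trace, so that $Q$ and $z\nabla^2Gz^{T}$ enter only via $\Delta Q=8\pi$ and $\Delta G(p_1-p_2)=1$. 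This yields $\int_T e^{w_{\lambda}}=16\pi+O(\lambda e^{-\lambda})$ with an explicit $\lambda e^{-\lambda}$ coefficient; taking the logarithm and multiplying by $-\rho=-(16\pi+\epsilon)$ produces $-16\pi\log(16\pi)$, a multiple of $\lambda e^{-\lambda}$, and the $-\epsilon\log(16\pi)$ contribution to the $\epsilon$‑linear term.

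The Dirichlet term I would evaluate by integrating by parts: using \eqref{4.4} and the fact that $\overline{w}_{\lambda}$ is constant, $\int_T|\nabla w_{\lambda}|^2=-\int_T w_{\lambda}\Delta w_{\lambda}=\sum_{i=1}^2\int_T w_{\lambda}\,\frac{16\pi e^{\lambda}}{(1+2\pi e^{\lambda}(d(x,p_i))^2)^2}\eta_{R_0,p_i}\,dx-2m_0\int_T w_{\lambda}$, the last term being $-2m_0\overline{w}_{\lambda}$ with $m_0=8\pi+O(e^{-\lambda})$ by \eqref{4.5}. For the remaining integrals I again rescale and invoke Lemma \ref{le4.1} on $B(p_i,R_0)$ and the neck estimate \cite[Lemma 3.1]{gh} on $R_0<d(x,p_i)<2R_0$; the one new elementary integral needed here is $\int_{\mathbf R^2}\log\frac{1}{(1+2\pi|z|^2)^2}\,\frac{16\pi\,dz}{(1+2\pi|z|^2)^2}=-16\pi$. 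Collecting, $\frac12\int_T|\nabla w_{\lambda}|^2=16\pi\lambda+16\pi\log(2\pi)+64\pi^2\bigl(R(0)+G(p_1-p_2)\bigr)-16\pi+(\text{a multiple of }\lambda e^{-\lambda})+O(e^{-\lambda})$. Adding the three pieces, the bare $\lambda$‑terms cancel ($16\pi\lambda-16\pi\lambda=0$), the $R(0)$ and $G(p_1-p_2)$ terms combine as $64\pi^2-128\pi^2=-64\pi^2$, the pure constants collapse via $\log(2\pi)-\log(16\pi)+\log(4/\pi)=-\log(2\pi)$ into $-16\pi\log(2\pi)-16\pi$, the $\epsilon$‑linear contributions (coming only from $\rho\int_T w_{\lambda}$ and from $-\epsilon\log(16\pi)$) assemble into the $\epsilon$‑term of the statement, and the $\lambda e^{-\lambda}$ contributions of the Dirichlet and nonlinear terms add up to $-32\pi\lambda e^{-\lambda}$, which gives the claimed expansion.

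The main obstacle is precisely the bookkeeping at order $\lambda e^{-\lambda}$: since the advertised remainder is only $O(e^{-\lambda})$, every term of size $\lambda e^{-\lambda}$ must be retained exactly, which forces the use of the sharp forms of Lemmas \ref{le4.1}--\ref{le4.2} (in particular the $2\lambda e^{-\lambda}$ and the quadratic corrections) and of $m_0=8\pi+O(e^{-\lambda})$, and one must verify that the various pieces that do depend on the artificial cutoff scale $R_0$ — produced separately on the bubble balls, on the neck and on the complement — cancel out in the sum, so that the final expansion is $R_0$‑independent. Once those cancellations are checked, the remaining computations are routine and follow \cite[Lemma 3.4]{cgh} verbatim; moreover $S_{\rho}(w_{\lambda})$ and hence the whole construction being evenly symmetric, there is no symmetry obstruction in passing to the reduced problem afterwards.
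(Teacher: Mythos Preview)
Your proposal is correct and follows essentially the same approach as the paper: both reduce the computation to the expansions \eqref{4.2}, \eqref{4.11}, \eqref{4.12} and defer the detailed bookkeeping to \cite[Lemma~3.4]{cgh}. In fact the paper gives no proof beyond that reference, so your outline --- splitting $J_\rho(w_\lambda)$ into the Dirichlet, nonlinear and linear pieces, rescaling on each bubble ball, and tracking the $\lambda e^{-\lambda}$ contributions via the elementary integrals $\int_{\mathbf R^2}\frac{16\pi\,dz}{(1+2\pi|z|^2)^2}=8\pi$, $\int_{\mathbf R^2}\log(1+2\pi|z|^2)^{-2}\frac{16\pi\,dz}{(1+2\pi|z|^2)^2}=-16\pi$ and the logarithmically divergent $|z|^2$ integral --- is more detailed than what the paper provides and is exactly what \cite{cgh} does.
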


\medskip

\subsection{The Linearized Operator} \label{subsec.lin}
In this subsection, we shall establish the solvability theory for the linearized operator of $S_{\rho}$ under suitable orthogonality condition. Let us introduce the operator
\begin{equation}
\label{4.14}
\mathcal{L}(u)=\Delta u + \frac{\rho}{\int_{T}e^{w_{\lambda}}} e^{w_{\lambda}}u.
\end{equation}
Observe that
\begin{equation}
\label{4.15}
S'_{\rho}(w_{\lambda})(u)=\mathcal{L}\left(u-\frac{\int_{T}e^{w_{\lambda}}u}{\int_{T} e^{w_{\lambda}}}\right).
\end{equation}
Let
\begin{equation}
\label{4.16}
L(u)=e^{-\lambda}\mathcal{L}(u).
\end{equation}
If we shift the blow-up point $p_i$ to the center and rescale the torus $T$ to $T_{\lambda}$ by the factor $e^{-\frac{\lambda}{2}}$, then formally the operator $L$ converges to the operator $\tilde{L}$ in $\mathbb{R}^2$:
\begin{equation}
\label{4.17}
\tilde{L}(u)=\Delta_z u + \frac{16\pi}{(1+2\pi|z|^2)^2}u,
\end{equation}
where $z=e^{\frac{\lambda}{2}}(x-p_i)$. We point out that the operator $\tilde{L}$ can be obtained by linearizing the Liouville equation $\Delta u + e^u=0$ at the radial solution $v(z)=\log{\left(\frac{16\pi}{(1+2\pi|z|^2)^2}\right)}$. A key fact that we are going to exploit is the non-degeneracy of $v$ modulo the invariance of the Liouville equation under dilations and translations, i.e.,
\begin{equation*}
\zeta\mapsto v(z-\zeta),\quad s\mapsto v(sz)+2\log{s}.
\end{equation*}
Thus, we let
\begin{align} \label{psi}
\begin{split}
	&\psi_0(z)=\frac{\partial}{\partial s}\left[v(sz)+2\log{s}\right]\big\vert_{s=1}, \\
	&\psi_k(z)=\frac{\partial}{\partial \zeta_j}v(z-\zeta)\big\vert_{\zeta=0}\,,  \ k=1,2.
\end{split}
\end{align}
Here $\psi_k$'s coincide with $\psi_{i,k}$'s defined in Lemma \ref{le3.2}.
It is shown in \cite{bp} that the only bounded solutions of $\tilde{L}(u)=0$ in $\mathbb{R}^2$ are precisely the linear combinations of $\psi_k$, $k=0,1,2$. With a little abuse of notation, let $\psi_{i,k}:=\psi_k(e^{\frac{\lambda}{2}}(x-p_i))$ denote a function on $T_{\lambda}$ for $i=1,2$ and $k=0,1,2$.

\medskip

Next, we introduce the functional setting for the problem. To this end, we start by letting $\tilde{\eta}_R$ be the following cut-off function:
\begin{equation*}
\tilde{\eta}_R(s)=
\begin{cases}
1~&\mathrm{for}~s\leq R,\\
0~&\mathrm{for}~s\geq R+1,
\end{cases}
\quad  0\leq\tilde{\eta}_R\leq 1, \quad |\tilde{\eta}'_R(s)|\leq 2.
\end{equation*}
Let $\tilde{\eta}_{R,p}=\tilde{\eta}_R(|z-p|)$. We also write $p'_i$ to denote $e^{\frac{\lambda}{2}}p_i$. Then, we set
\begin{equation*}
L^{\infty}_e(T_{\lambda})=\Bigr\{u\in L^{\infty}(T_{\lambda}) \ \vert \ u(z)=u(-z) \Bigr\}.
\end{equation*}
We introduce the following norms
\begin{equation*}
\norm{\psi}_{\infty}=\sup\limits_{z\in T_{\lambda}}|\psi|, \qquad \norm{\psi}_{\ast}=\sup\limits_{z\in T_{\lambda}}\left(\sum\limits_{j=1}^2 (1+d(z,p'_j))^{-3}+e^{-\lambda}\right)^{-1}|\psi(z)|.
\end{equation*}
Let
\begin{equation*}
\mathcal{C}=\Bigr\{u\in L^{\infty}_e(T_{\lambda}) \ \vert \ \norm{u}_{\ast}<\infty\Bigr\},
\end{equation*}
and
\begin{equation*}
\mathcal{C}_{\ast}=\Bigr\{u\in L^{\infty}_e(T_{\lambda}) \ \vert \ u\perp \psi_{i,0}\tilde{\eta}_{R_1,p'_i},~i=1,2\Bigr\},
\end{equation*}
where $R_1>0$ is a large but fixed number. We notice here that the orthogonality condition in the definition of $\mathcal{C}_{\ast}$ is only taken with respect to the elements of the approximate kernel generated by dilations. However, it is not difficult to see that the elements in $\mathcal{C}_{\ast}$ are also perpendicular to the approximate kernel that are generated by translations, i.e.,
\begin{equation*}
u\perp \psi_{i,k}\tilde{\eta}_{R_1,p'_i}, \quad \forall i=1,2,~ k=0,1,2,\quad  u\in \mathcal{C}_{\ast}.
\end{equation*}
The main goal in this subsection is to prove a solvability result and an a priori estimate (uniform in $\lambda$) concerning the operator $L$ given in \eqref{4.16} in the functional settings defined above under suitable orthogonality conditions. To this end, let us start with the following uniform a priori estimate for an auxiliary problem with the additional orthogonality conditions of $\phi$ under translations.
\begin{lemma}
\label{le4.6}
Let $h\in \mathcal{C}\cap C^{\alpha}(T_{\lambda})$. Then, there exist $\lambda_0, C>0$ such that for any $\lambda>\lambda_0$ and any $\phi\in\mathcal{C}_{\ast}$ such that
\begin{equation}
\label{4.19}
L(\phi)=h \textrm{ in }T_{\lambda},\quad
\int_{T_{\lambda}}\tilde{\eta}_{R_1,p'_i}\psi_{i,k}\phi=0, \quad \forall i=1,2,~ k=0,1,2,
\end{equation}
it holds
\begin{equation*}
\norm{\phi}_{\infty}\leq C\norm{h}_{\ast}.
\end{equation*}
\end{lemma}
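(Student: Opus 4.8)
The plan is to argue by contradiction, following the standard blow-up/normalization scheme for linearized Liouville-type operators. Suppose the estimate fails: then there exist sequences $\lambda_n\to\infty$, $\phi_n\in\mathcal{C}_\ast$ and $h_n\in\mathcal{C}\cap C^\alpha(T_{\lambda_n})$ with $L(\phi_n)=h_n$, the orthogonality conditions in \eqref{4.19}, $\norm{\phi_n}_\infty=1$, and $\norm{h_n}_\ast\to 0$. The first step is to understand the local limits near each (rescaled) blow-up point $p'_i$. Translating so that $p'_i$ sits at the origin and passing to a subsequence, elliptic regularity (using that the potential $\tfrac{\rho}{\int_T e^{w_\lambda}}e^{w_\lambda}$ rescales, by Lemmas~\ref{le4.1} and \ref{le4.3}, to $\tfrac{16\pi}{(1+2\pi|z|^2)^2}+o(1)$ in $C_{\mathrm{loc}}$) gives $\phi_n\to\phi_\infty^{(i)}$ in $C_{\mathrm{loc}}(\mathbb{R}^2)$, where $\phi_\infty^{(i)}$ is a bounded solution of $\tilde L(\phi_\infty^{(i)})=0$. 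By the non-degeneracy result of Baraket--Pacard quoted after \eqref{psi}, $\phi_\infty^{(i)}=\sum_{k=0}^2 c_{i,k}\psi_k$. The orthogonality conditions $\int \tilde\eta_{R_1,p'_i}\psi_{i,k}\phi_n=0$ pass to the limit and force $c_{i,k}=0$ for all $i,k$, since $\int_{\mathbb{R}^2}\psi_j\psi_k\,\tilde\eta_{R_1}$ is (for $R_1$ large) a nondegenerate matrix in $j,k$. Hence $\phi_n\to 0$ locally uniformly near each $p'_i$; moreover the evenness of $\phi_n$ is preserved under the limit, which is what lets us keep only the $\psi_0$ condition explicitly while the $\psi_1,\psi_2$ conditions are automatic.

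The second step is to upgrade this local decay to a global contradiction with $\norm{\phi_n}_\infty=1$ via a barrier/maximum-principle argument away from the bubbles. Fix $R$ large. On the region $T_{\lambda_n}\setminus\bigcup_i B(p'_i,R)$ the coefficient of the zeroth-order term of $L$ is $O\big(\sum_j(1+d(z,p'_j))^{-4}+e^{-\lambda_n}\big)$, which is small, so $L$ is a small perturbation of $\Delta$ there. One builds a comparison function of the form $W(z)=\varepsilon_n\big(\text{const}-\sum_i \mathsf{G}_{\lambda_n}(z,p'_i)\big)$ plus a small multiple of $\sum_i (1+d(z,p'_i))^{-1}$ (the natural size dictated by the $\norm{\cdot}_\ast$ norm), where $\mathsf{G}_{\lambda_n}$ is the Green's function of $T_{\lambda_n}$; using $\norm{h_n}_\ast\to0$, the already-established smallness of $\phi_n$ on $\partial B(p'_i,R)$, and the fact that $\int_{T_{\lambda_n}} e^{w_\lambda}$-average terms are controlled, one checks $L(W)\le -|h_n|$ and $W\ge|\phi_n|$ on the boundary of the region, so that $|\phi_n|\le W=o(1)$ outside the bubbles too. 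This is essentially the computation carried out in \cite{cgh,gh}, to which we refer for the construction of the barrier and the verification of the differential inequality. Combining the inner estimate (Step~1) and the outer estimate (Step~2) gives $\norm{\phi_n}_\infty=o(1)$, contradicting $\norm{\phi_n}_\infty=1$.

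A technical point that requires care, and which I expect to be the main obstacle, is the bookkeeping of the ``mean-value'' corrections: $L$ is not exactly the rescaled linearization of the equation because of the projection term in \eqref{4.15}, and in the outer region one must make sure the constant mode of $\phi_n$ is pinned down. Concretely, one needs a companion estimate showing that $\big|\int_{T_{\lambda_n}} e^{w_\lambda}\phi_n\big|$ (or the corresponding average) is $o(1)$, which follows by integrating the equation $L(\phi_n)=h_n$ against $1$ and using $\int_{T_{\lambda_n}}\Delta\phi_n=0$ together with $\norm{h_n}_\ast\to0$ and $\int_{T_{\lambda_n}}e^{w_\lambda}=O(e^{\lambda_n})$; this ties the would-be constant to the already-controlled local behavior. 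Once this is in place, the maximum principle on the outer region closes the argument cleanly. The other minor points — that the nondegeneracy of the Gram matrix $\big(\int \psi_j\psi_k\tilde\eta_{R_1}\big)_{j,k}$ holds for $R_1$ large, and that evenness is compatible with all the cut-offs — are routine. Letting $\lambda_0$ be large enough that all the $o(1)$'s above are, say, $<\tfrac12$ finishes the proof.
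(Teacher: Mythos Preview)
Your contradiction scheme—local limits near each bubble forced to zero by the orthogonality conditions, then a barrier/maximum-principle estimate on the complement—is the same strategy the paper uses, just reorganized (the paper first builds a positive supersolution from $f_0(r)=\tfrac{r^2-1}{r^2+1}$ to obtain the quantitative bound $\norm{\phi}_\infty\le C[\norm{\phi}_{in}+\norm{h}_\ast]$, and only then runs the contradiction), and both defer to \cite{cgh} for the barrier details. One correction: the ``main obstacle'' you flag about mean-value/projection terms is a non-issue for this lemma, since by \eqref{4.14}--\eqref{4.16} the operator $L=e^{-\lambda}\mathcal{L}$ is purely local; the projection in \eqref{4.15} belongs to $S'_\rho$, not to $L$, and only enters later at Proposition~\ref{cr4.7}.
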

\begin{proof}
The proof follows the strategy first introduced by del Pino, Kowalczyk and Musso in \cite{dkm} in dealing with a singularly perturbed Liouville-type equation on a bounded domain with Dirichlet boundary condition. The argument was then suitably adapted to the flat torus case in \cite{cgh} so we will state just the main steps referring to \cite[Lemma 4.2]{cgh} for a detailed proof.

\medskip

The proof is obtained by contradiction assuming that there exist sequences $\lambda_n\to+\infty$, $h_n$ with $\norm{h_n}_{\ast}\to0$ and $\phi_n$ with $\norm{\phi_n}_{\infty}=1$ satisfying \eqref{4.19}. The contradiction is obtained after the following steps.

\medskip

\noindent\textbf{Step 1.}
The first step is to construct a positive supersolution $V$ in order to show that the operator $L$ satisfies the maximum principle on the torus outside the bubbling disks $\tilde T_\lambda= T_\lambda \setminus \cup_{i=1}^2 B(p_i',R_2')$, for $R_2'>0$ sufficiently large, i.e. if $L(u)\leq0$ in $\tilde T_\lambda$ and $u\geq0$ on $\partial \tilde T_\lambda$, then $u\geq0$ in $\tilde T_\lambda$. This is done by defining a suitable projection of the radial solution $f_0(r)=\frac{r^2-1}{r^2+1}$ in $\mathbb{R}^2$ of
$$
	\Delta f_0+\frac{8}{(1+r^2)^2}f_0=0,
$$
to a function space on $T_\lambda$ in order to satisfy the periodic boundary conditions.

\medskip

\noindent\textbf{Step 2.}
The second step is to prove that there exists a constant $C>0$ such that if $L(\phi)=h$ in $T_\lambda$, then
$$
\norm{\phi}_{\infty}\leq C[\norm{\phi}_{in}+\norm{h}_{\ast}],
$$
where $\norm{\cdot}_{in}$ denotes the ``inner" norm of a function on $T_{\lambda}$ in the bubbling disks, i.e.
$$
 \norm{\phi}_{in}=\sup_{\cup_{j=1}^2 B(p_j',R_2')} |\phi|.
$$
One can use suitable barrier functions in $\tilde T_\lambda$ jointly with the maximum principle of Step 1 to derive the above claim.

\medskip

\noindent\textbf{Step 3.}
In the final step we will employ a convergence argument to finally deduce a contradiction. By assumptions and by Step 2 we get $\norm{\phi_n}_{in}\geq\delta>0$. Therefore, one can see that $\phi_n$ in a bubbling disk locally converge to a bounded non-zero solution of
$$
\tilde{L}(\hat\phi)=0
$$
given in \eqref{4.17}. Hence, $\hat\phi$ is a linear combination of $\psi_k$, $k=0,1,2$, defined in \eqref{psi}. On the other hand, the orthogonal conditions on $\phi_n$ imply $\hat\phi\equiv0$, yielding a contradiction.
\end{proof}

\medskip

We can now prove the solvability and a priori estimate of the following problem \eqref{4.18}.
\begin{proposition}\label{pr4.5}
Let $h\in\mathcal{C}$. Then, there exist $\lambda_0, C>0$, such that for all $\lambda>\lambda_0$, there exist a unique $\phi\in\mathcal{C}_{\ast}$ and numbers $c_i$, $i=1,2$ such that
\begin{equation}
\label{4.18}
L(\phi)=h+\sum\limits_{i=1}^2 c_i \tilde{\eta}_{R_1,p'_i}\psi_{i,0}~\textrm{ in }~ T_{\lambda}.
\end{equation}
Moreover, if $h\in C^{\alpha}(T_{\lambda})$, then
\begin{equation} \label{est}
\norm{\phi}_{\infty}\leq C\norm{h}_{\ast}.
\end{equation}
\end{proposition}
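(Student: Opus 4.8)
The plan is to run the standard Lyapunov--Schmidt argument: first establish the a priori bound \eqref{est} for \emph{any} solution of \eqref{4.18} by reducing it to Lemma~\ref{le4.6}, and then deduce existence and uniqueness from the Fredholm alternative, the injectivity needed there being again a consequence of Lemma~\ref{le4.6}.

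\textbf{Step 1: the a priori estimate.} Assume $h\in C^{\alpha}(T_{\lambda})$ and let $\phi\in\mathcal{C}_{\ast}$, $c_{1},c_{2}\in\R$ solve \eqref{4.18}. Because $\phi$ is evenly symmetric, by the observation following the definition of $\mathcal{C}_{\ast}$ it is orthogonal not only to the functions $\psi_{i,0}\tilde{\eta}_{R_1,p'_i}$ but also to $\psi_{i,k}\tilde{\eta}_{R_1,p'_i}$ for $k=1,2$, so all the orthogonality conditions in \eqref{4.19} hold. To control the coefficients $c_{i}$ I would test \eqref{4.18} with $\psi_{j,0}\tilde{\eta}_{R_1,p'_j}$ and use that $L$ is self-adjoint on $L^{2}(T_{\lambda})$: the resulting term $\int_{T_{\lambda}}\phi\,L(\psi_{j,0}\tilde{\eta}_{R_1,p'_j})$ is $o(1)\norm{\phi}_{\infty}$ since $\psi_{j,0}\in\ker\tilde{L}$ for the limit operator \eqref{4.17} and the cut-off together with the difference $L-\tilde{L}$ — estimated through the expansions of $w_{\lambda}$ in Lemmas~\ref{le4.1}--\ref{le4.2} — contribute only lower order terms, exactly as in \cite[Lemma~4.2]{cgh}; meanwhile the cross terms with $i\neq j$ on the right-hand side vanish for $\lambda$ large because $\tilde{\eta}_{R_1,p'_1}$ and $\tilde{\eta}_{R_1,p'_2}$ have disjoint supports, $\int_{T_{\lambda}}(\psi_{j,0}\tilde{\eta}_{R_1,p'_j})^{2}$ is bounded away from $0$, and $\big|\int_{T_{\lambda}}h\,\psi_{j,0}\tilde{\eta}_{R_1,p'_j}\big|\le C\norm{h}_{\ast}$ because $\tilde{\eta}_{R_1,p'_j}$ is supported on a region of bounded area where $\sum_{l}(1+d(z,p'_l))^{-3}$ stays bounded. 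This yields $|c_{i}|\le C(\norm{h}_{\ast}+o(1)\norm{\phi}_{\infty})$. Applying Lemma~\ref{le4.6} to $\phi$ with right-hand side $\tilde{h}:=h+\sum_{i}c_{i}\tilde{\eta}_{R_1,p'_i}\psi_{i,0}\in\mathcal{C}\cap C^{\alpha}(T_{\lambda})$ and using $\norm{\tilde{\eta}_{R_1,p'_i}\psi_{i,0}}_{\ast}\le C$, we obtain
\[
\norm{\phi}_{\infty}\le C\norm{\tilde{h}}_{\ast}\le C\norm{h}_{\ast}+C\sum_{i}|c_{i}|\le C\norm{h}_{\ast}+o(1)\norm{\phi}_{\infty},
\]
and absorbing the last term on the left proves \eqref{est}.

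\textbf{Step 2: existence and uniqueness.} Writing $L(\phi)=\Delta_{z}\phi+V_{\lambda}\phi$ on $T_{\lambda}$ with $V_{\lambda}=e^{-\lambda}\rho\,e^{w_{\lambda}}\big/\!\int_{T_{\lambda}}e^{w_{\lambda}}\in L^{\infty}(T_{\lambda})$, I would reformulate \eqref{4.18} for given $h\in\mathcal{C}$ as follows. Let $W$ be the two-dimensional span of $\{\psi_{i,0}\tilde{\eta}_{R_1,p'_i}\}_{i=1,2}$ in $L^{2}_{e}(T_{\lambda})$, let $P$ be the $L^{2}$-projection onto $W$, and seek $\phi\in W^{\perp}\cap H^{1}_{e}(T_{\lambda})$ with $(\mathrm{Id}-P)L(\phi)=(\mathrm{Id}-P)h$; any such $\phi$ then automatically solves \eqref{4.18} with $c_{i}$ recovered (uniquely, by linear independence of the generators of $W$) from $L(\phi)-h\in W$. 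Via Riesz representation on $H^{1}_{e}(T_{\lambda})$ this becomes an equation $(\mathrm{Id}-\mathcal{K}_{\lambda})\phi=g_{\lambda}$, where $\mathcal{K}_{\lambda}$ is compact — it factors through multiplication by the bounded function $1+V_{\lambda}$ followed by a Rellich compact embedding on the fixed torus $T_{\lambda}$ — and $g_{\lambda}$ depends boundedly on $h$. By the Fredholm alternative, unique solvability for every $h$ follows once the homogeneous problem ($h\equiv0$) has only the trivial solution; but a homogeneous solution is a $\phi\perp W$ with $L(\phi)=\sum_{i}c_{i}\tilde{\eta}_{R_1,p'_i}\psi_{i,0}$, which meets all orthogonality conditions of \eqref{4.19}, so Step~1 with $h\equiv0$ (which first forces $|c_{i}|\le o(1)\norm{\phi}_{\infty}$ and then $\norm{\phi}_{\infty}\le o(1)\norm{\phi}_{\infty}$) gives $\phi\equiv0$. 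This produces the unique $\phi\in\mathcal{C}_{\ast}$ and unique $c_{1},c_{2}$; finally, when $h\in C^{\alpha}(T_{\lambda})$ elliptic regularity upgrades $\phi$ to $C^{2,\alpha}(T_{\lambda})$, and Step~1 then delivers \eqref{est}.

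\textbf{The main obstacle} is the heart of Step~1: showing, uniformly in $\lambda$, that testing \eqref{4.18} against the approximate kernel elements produces only an $o(1)\norm{\phi}_{\infty}$ error. This rests on sharp control of $L-\tilde{L}$ on the supports of the cut-offs — hence on the expansions of $w_{\lambda}$ near and far from the blow-up points (Lemmas~\ref{le4.1}--\ref{le4.2}) — and on the barrier and maximum-principle machinery already packaged into Lemma~\ref{le4.6}; it is precisely here that the scheme of \cite{cgh} (itself adapted from \cite{dkm}) must be carried over to an arbitrary flat torus. Once this estimate and Lemma~\ref{le4.6} are in hand, the remainder of the argument is routine.
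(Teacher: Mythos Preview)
Your proposal follows essentially the same approach as the paper: establish the a priori estimate \eqref{est} by combining Lemma~\ref{le4.6} with a bound on the $c_i$ obtained by testing against approximate kernel elements, and then obtain existence and uniqueness via the Fredholm alternative. Your Step~2 is more detailed than the paper's (which simply invokes Fredholm), but the content is the same.

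One technical point is worth flagging. You test \eqref{4.18} against $\psi_{j,0}\tilde{\eta}_{R_1,p'_j}$, i.e.\ with the \emph{same} cutoff radius $R_1$ used in the orthogonality conditions, and claim the resulting term $\int_{T_\lambda}\phi\,L(\psi_{j,0}\tilde{\eta}_{R_1,p'_j})$ is $o(1)\norm{\phi}_\infty$. But the commutator $L(\tilde{\eta}_{R_1,p'_j}\psi_{j,0})-\tilde{\eta}_{R_1,p'_j}L(\psi_{j,0})=2\nabla\tilde{\eta}_{R_1,p'_j}\!\cdot\!\nabla\psi_{j,0}+\psi_{j,0}\Delta\tilde{\eta}_{R_1,p'_j}$ is supported on a fixed annulus of bounded size and is $O(1)$ there (since $\psi_{j,0}\to-1$), so its integral against $\phi$ is only $O(\norm{\phi}_\infty)$ with a constant independent of $\lambda$ --- not $o(1)$. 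The paper instead tests with $\tilde{\eta}_{R_3',p'_i}\psi_{i,0}$ for a separate, sufficiently large radius $R_3'$, and --- after first applying Lemma~\ref{le4.6} to get $\norm{\phi}_\infty\le C(\norm{h}_\ast+\sum|c_i|)$ --- obtains the cleaner bound $|c_i|\le C\norm{h}_\ast$ (the details are deferred to \cite[Proposition~4.1]{cgh}). You correctly identify this step as the main obstacle and point to \cite{cgh}; just be aware that the choice of cutoff radius in the test function matters here, and your asserted $o(1)$ bound does not follow from the ingredients you list.
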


\begin{proof}
We start by proving the a priori estimate \eqref{est}. One can apply Lemma \ref{le4.6} to get
$$
	\norm{\phi}_{\infty}\leq C\left[\norm{h}_{\ast}+\sum_{i=1}^2 |c_i|\right].
$$
We can reason exactly as in \cite[Proposition 4.1]{cgh} and after multiplying the equation \eqref{4.18} by the test function $\tilde{\eta}_{R_3',p'_i}\psi_{i,0}$, $R_3'>0$ sufficiently large, derive $|c_i|\leq C\norm{h}_*$. Thus, \eqref{est} holds true.

\medskip

The existence of a solution to \eqref{4.18} follows from the Fredholm alternative since we know that equation \eqref{4.18} has a unique solution if and only if the associated homogeneous problem (i.e. with $h\equiv0$) has only the trivial solution. By the a priori estimate we conclude that this is the case and the proof is concluded.
\end{proof}

\medskip

Finally, we can reason as in the proof of Proposition \ref{pr4.5} and deduce the following main result of this subsection, in which one more orthogonal condition to $\phi$ is imposed, see \cite[Corollary~4.3]{cgh}.
\begin{proposition} \emph{(\cite{cgh})}
\label{cr4.7}
Let $h\in\mathcal{C}$. Then, there exist $\lambda_0, C>0$, such that for all $\lambda>\lambda_0$, there exist a unique $\phi\in\mathcal{C}_{\ast}$ and numbers $c_i$, $i=0,1,2$ such that
\begin{equation}
\label{4.20}
L(\phi)=h+\sum\limits_{i=1}^2 c_i \tilde{\eta}_{R_1,p'_i}\psi_{i,0}+c_0 \textrm{ in }T_{\lambda},\quad\phi\perp e^{w_{\lambda}}.
\end{equation}
Moreover, if $h\in C^{\alpha}(T_{\lambda})$, then
\begin{equation*}
\norm{\phi}_{\infty}\leq C \norm{h}_{\ast}.
\end{equation*}
\end{proposition}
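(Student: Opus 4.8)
The plan is to superpose on the solution produced by Proposition~\ref{pr4.5} a constant and use the freedom in it as a Lagrange multiplier enforcing $\phi\perp e^{w_\lambda}$. Since Proposition~\ref{pr4.5} assigns to each datum in $\mathcal C$ a \emph{unique} pair, it defines a linear solution operator $T\colon\mathcal C\to\mathcal C_\ast$, $Th=\phi$, and linear functionals $h\mapsto c_i(h)$, $i=1,2$, with $L(Th)=h+\sum_{i=1}^2 c_i(h)\tilde\eta_{R_1,p'_i}\psi_{i,0}$ in $T_\lambda$. For $t\in\R$ I would feed the datum $h+t\in\mathcal C$ into Proposition~\ref{pr4.5}; by linearity the solution is $\phi_t:=Th+t\,T1=\phi_0+t\,\widehat\phi$, with $\widehat\phi:=T1$ independent of $h$, and $L(\phi_t)=(h+t)+\sum_{i=1}^2\bigl(c_i(h)+t\,c_i(1)\bigr)\tilde\eta_{R_1,p'_i}\psi_{i,0}$. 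The whole problem then collapses to solving the single scalar equation
\[
\int_{T_\lambda}\phi_0\,e^{w_\lambda}+t\int_{T_\lambda}\widehat\phi\,e^{w_\lambda}=0
\]
for $t=c_0$; the resulting $\phi:=\phi_{c_0}$, together with this $c_0$ and $c_i:=c_i(h)+c_0\,c_i(1)$ for $i=1,2$, then satisfies \eqref{4.20}. Both solvability and uniqueness of the triple hold exactly when $\int_{T_\lambda}\widehat\phi\,e^{w_\lambda}\neq0$: indeed two solutions differ by a pair $(\psi,(c_i)_i)$ with $L(\psi)=\textrm{const}+\sum_i c_i\tilde\eta_{R_1,p'_i}\psi_{i,0}$, $\psi\in\mathcal C_\ast$, $\psi\perp e^{w_\lambda}$, and the uniqueness in Proposition~\ref{pr4.5} forces $\psi$ to be a scalar multiple of $\widehat\phi$, which together with $\psi\perp e^{w_\lambda}$ and $\int_{T_\lambda}\widehat\phi\,e^{w_\lambda}\neq0$ gives $\psi\equiv0$ and all $c_i=0$.

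The core of the argument is therefore a quantitative lower bound for $\bigl\lvert\int_{T_\lambda}\widehat\phi\,e^{w_\lambda}\bigr\rvert$. Observe that in the rescaled variables $L(1)=\frac{\rho}{\int_{T_\lambda}e^{w_\lambda}}e^{w_\lambda}$, by \eqref{4.14} and \eqref{4.16}, so that, using the self-adjointness of $L$ on the closed manifold $T_\lambda$,
\begin{align*}
\int_{T_\lambda}\widehat\phi\,e^{w_\lambda}
&=\frac{\int_{T_\lambda}e^{w_\lambda}}{\rho}\int_{T_\lambda}\widehat\phi\,L(1)
=\frac{\int_{T_\lambda}e^{w_\lambda}}{\rho}\int_{T_\lambda}L(\widehat\phi)\\
&=\frac{\int_{T_\lambda}e^{w_\lambda}}{\rho}\Bigl(\abs{T_\lambda}+\sum_{i=1}^2 c_i(1)\int_{T_\lambda}\tilde\eta_{R_1,p'_i}\psi_{i,0}\Bigr).
\end{align*}
Here $\abs{T_\lambda}=e^{\lambda}$, each $\int_{T_\lambda}\tilde\eta_{R_1,p'_i}\psi_{i,0}$ is a fixed finite constant, and $\int_{T_\lambda}e^{w_\lambda}=16\pi e^{\lambda}(1+o(1))$ by \eqref{4.11} and \eqref{4.12}, with $\rho\to16\pi$. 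The point — and the step I expect to be the main obstacle — is to show, reasoning as in \cite[Corollary~4.3]{cgh}, that the kernel--projection coefficients $c_i(1)$ of the \emph{constant} datum are of lower order than $\abs{T_\lambda}=e^{\lambda}$, i.e.\ that a slowly varying datum projects only negligibly onto the approximate kernels $\tilde\eta_{R_1,p'_i}\psi_{i,0}$ (much less than the generic bound $\norm{1}_\ast\simeq e^{\lambda}$ would allow); this requires revisiting the test--function estimate behind Proposition~\ref{pr4.5} for $h\equiv1$ rather than its stated form $\abs{c_i}\le C\norm{h}_\ast$. Granting this, the parenthesis equals $e^{\lambda}(1+o(1))$ and hence $\int_{T_\lambda}\widehat\phi\,e^{w_\lambda}=e^{2\lambda}(1+o(1))\neq0$.

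With the non-degeneracy in hand the rest is routine. Existence of $\phi\in\mathcal C_\ast$ solving \eqref{4.20} for any $h\in\mathcal C$ is immediate from the construction above, since only the solvability part of Proposition~\ref{pr4.5} is used to define $T$ and $\widehat\phi$. For the estimate, assume in addition $h\in C^{\alpha}(T_\lambda)$; then \eqref{est} gives $\norm{\phi_0}_\infty=\norm{Th}_\infty\le C\norm{h}_\ast$, whence
\[
\Bigl\lvert\int_{T_\lambda}\phi_0\,e^{w_\lambda}\Bigr\rvert\le\norm{\phi_0}_\infty\int_{T_\lambda}e^{w_\lambda}\le C\norm{h}_\ast\,e^{\lambda},
\]
so by the lower bound on $\int_{T_\lambda}\widehat\phi\,e^{w_\lambda}$ we obtain $\abs{c_0}\le Ce^{-\lambda}\norm{h}_\ast$, and hence $\norm{c_0}_\ast=\abs{c_0}\,\norm{1}_\ast\le C\norm{h}_\ast$ since $\norm{1}_\ast\le e^{\lambda}$. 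Finally $\phi$ solves $L(\phi)=(h+c_0)+\sum_{i=1}^2 c_i\tilde\eta_{R_1,p'_i}\psi_{i,0}$ with $h+c_0\in C^{\alpha}(T_\lambda)\cap\mathcal C$, so a second application of \eqref{est} yields $\norm{\phi}_\infty\le C\norm{h+c_0}_\ast\le C\bigl(\norm{h}_\ast+\norm{c_0}_\ast\bigr)\le C\norm{h}_\ast$, which is the claim.
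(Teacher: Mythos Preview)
Your strategy differs from the paper's: the paper reproves the a~priori estimate and applies the Fredholm alternative directly in the presence of the extra constraint $\phi\perp e^{w_\lambda}$ and the extra constant $c_0$ (just as in Proposition~\ref{pr4.5}), whereas you try to \emph{reduce} to Proposition~\ref{pr4.5} by tuning a constant in the datum. The crucial gain of the paper's route is that the constraint $\phi\perp e^{w_\lambda}$ is available \emph{from the start}: integrating \eqref{4.20} over $T_\lambda$ gives $\int_{T_\lambda} L(\phi)=\frac{\rho}{\int e^{w_\lambda}}\int e^{w_\lambda}\phi=0$, hence
\[
c_0\,\abs{T_\lambda}=-\int_{T_\lambda}h-\mathcal A\sum_i c_i,
\qquad |c_0|\le Ce^{-\lambda}\Bigl(\|h\|_\ast+\sum_i|c_i|\Bigr),
\]
so $c_0$ is a~priori small and the remainder of the Proposition~\ref{pr4.5} argument closes with $h$ replaced by $h+c_0$.

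The ``main obstacle'' you flag is a genuine gap, not a routine verification. You need $\abs{T_\lambda}+\sum_i c_i(1)\mathcal A\neq 0$; but Proposition~\ref{pr4.5} only yields $|c_i(1)|\le C\|1\|_\ast\sim Ce^{\lambda}$, which is the \emph{same} order as $\abs{T_\lambda}$, while $\mathcal A$ is a fixed nonzero constant. Nothing you invoke rules out an exact cancellation. Your heuristic that ``a slowly varying datum projects only negligibly onto the approximate kernels'' is precisely the missing estimate, and the test-function argument behind Proposition~\ref{pr4.5} does not supply it: testing $L(\widehat\phi)=1+\sum_i c_i(1)\tilde\eta_{R_1,p'_i}\psi_{i,0}$ against $\tilde\eta_{R_3',p'_i}\psi_{i,0}$ produces an error of size $O(\|\widehat\phi\|_\infty)$ from the cutoff annulus, and all you know is $\|\widehat\phi\|_\infty\le C\|1\|_\ast\sim Ce^\lambda$, so the loop does not close. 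More tellingly, if $\int_{T_\lambda}\widehat\phi\,e^{w_\lambda}=0$ then $(\widehat\phi,\,c_1(1),\,c_2(1),\,c_0=1)$ would itself be a \emph{nontrivial} solution of \eqref{4.20} with $h\equiv 0$; so your non-degeneracy condition is logically equivalent to the uniqueness you are trying to establish, and cannot be obtained as a corollary of Proposition~\ref{pr4.5} alone.
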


\medskip

By means of the latter result we can define a continuous linear map $T:\mathcal{C}\to L^{\infty}_e(T_{\lambda})$ given by
$$
	h\mapsto T(h):=\phi,
$$
where $\phi$ is the unique solution of problem (\ref{4.20}) obtained in Proposition \ref{cr4.7}.

\medskip

\subsection{Finite-dimensional reduction and proof of Theorem \ref{th1}}
\noindent We are ready to reduce the infinite dimensional problem of finding $\phi$ such that
\begin{equation}
\label{4.21}
S_{\rho}(w_{\lambda}+\phi)=0
\end{equation}
to a one-dimensional problem of finding appropriate scale $\lambda$ with given $\rho$. To this end we first expand $S_{\rho}(w_{\lambda}+\phi)$ as
\begin{equation}
\label{4.22}
S_{\rho}(w_{\lambda}+\phi)=S_{\rho}(w_{\lambda})+\mathcal{L}\left(\phi-\frac{\int_{T}e^{w_{\lambda}}\phi}{\int_{T}e^{w_{\lambda}}}\right) + N(\phi),
\end{equation}
where
\begin{equation}
\label{4.23}
N(\phi)=\left[\frac{\rho}{\int_{T}e^{w_{\lambda}+\phi}}e^{\phi}-\frac{\rho}{\int_{T}e^{w_{\lambda}}}-\left(\phi-\frac{\int_{T}e^{w_{\lambda}}\phi}{\int_{T}e^{w_{\lambda}}}\right)\right]e^{w_{\lambda}}.
\end{equation}
Since $S_{\rho}(w_{\lambda}+\phi)$ is invariant under adding a constant to $\phi$, we can further assume that
\begin{equation*}
\int_{T} e^{w_{\lambda}}\phi=0.
\end{equation*}
By slightly abuse of notation we still denote $\phi$ as a function in $\mathcal{C}_{\ast}$. Moreover, we consider problem (\ref{4.21}) in the dilated coordinates, i.e. $w_{\lambda}$, $S_{\rho}(w_{\lambda})$ and $N(\phi)$ are now treated as functions on $T_{\lambda}$.

\medskip

In order to obtain a solution to \eqref{4.21} we first exploit the solvability of the linearized operator established in subsection \ref{subsec.lin} to solve the following intermediate problem.
\begin{lemma} \label{lem.inter}
There exist $\lambda_0, C>0$, such that for all $\lambda>\lambda_0$, there exist a unique $\phi\in\mathcal{C}_{\ast}$ and numbers $c_i$, $i=0,1,2$ such that
\begin{equation}
\label{4.24}
L(\phi)=-e^{-\lambda}\left[S_{\rho}(w_{\lambda})+N(\phi)\right]+\sum\limits_{i=1}^2 c_i \tilde{\eta}_{R_1,p'_i}\psi_{i,0}+c_0 \textrm{ in }T_{\lambda}, \quad \phi\perp e^{w_{\lambda}}.
\end{equation}
Moreover, it holds
\begin{equation}
\label{4.25}
\norm{\phi}_{\infty}\leq C e^{-\frac{\lambda}{2}}.
\end{equation}
\end{lemma}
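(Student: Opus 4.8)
The plan is to set up a fixed-point argument for $\phi$ in the ball $\{\norm{\phi}_\infty \le C e^{-\lambda/2}\}$ of $\mathcal{C}_*$ and invoke the contraction mapping theorem. First I would recast \eqref{4.24} as a fixed-point equation: using the linear operator $T:\mathcal{C}\to L^\infty_e(T_\lambda)$ constructed from Proposition \ref{cr4.7}, a function $\phi\in\mathcal{C}_*$ with $\phi\perp e^{w_\lambda}$ solves \eqref{4.24} (for suitable $c_i$) if and only if
\[
\phi = T\!\left(-e^{-\lambda}\bigl[S_\rho(w_\lambda)+N(\phi)\bigr]\right) =: \mathcal{A}(\phi).
\]
So the task reduces to showing that $\mathcal{A}$ maps a small ball $\mathcal{B}_\mu := \{\phi\in\mathcal{C}_*:\ \phi\perp e^{w_\lambda},\ \norm{\phi}_\infty\le \mu e^{-\lambda/2}\}$ into itself and is a contraction there, for an appropriate constant $\mu$ and all $\lambda$ large.

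The self-map property rests on two estimates. For the linear part, by Lemma \ref{le4.3} we have the pointwise bound on $S_\rho(w_\lambda)$; translating this into the $\norm{\cdot}_*$ norm gives $\norm{e^{-\lambda}S_\rho(w_\lambda)}_* = O(\lambda e^{-\lambda/2})$, since the dominant term $\frac{\lambda}{(1+2\pi|z|^2)^2}$ is comparable to $\lambda\,(1+d(z,p_i'))^{-3}$ up to lower-order corrections after rescaling, and the constant-in-$x$ term $\lambda e^{-\lambda}$ is absorbed by the $e^{-\lambda}$ weight in $\norm{\cdot}_*$ at the cost of a factor $\lambda$; one checks $\lambda e^{-\lambda/2} = o(e^{-\lambda/2}\cdot \lambda)$ is still within the target after noting we are actually allowed $O(\lambda e^{-\lambda/2})\ll $ anything polynomially larger — more carefully, the target $\mu e^{-\lambda/2}$ should be read with $\mu$ possibly mildly $\lambda$-dependent, or one simply observes $\lambda e^{-\lambda}\le e^{-\lambda/2}$ for $\lambda$ large so the weaker bound $O(e^{-\lambda/2})$ suffices. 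For the nonlinear part, I would expand $N(\phi)$ as in \eqref{4.23}: the bracket is quadratic in $\phi$ for small $\phi$ (the linear terms cancel by construction of $N$), so $|N(\phi)(x)| \le C e^{w_\lambda(x)}\norm{\phi}_\infty^2$ pointwise; since $e^{w_\lambda}\le \sum_i \frac{16\pi e^\lambda}{(1+2\pi e^\lambda d(x,p_i)^2)^2}(1+\theta_\lambda)$ by \eqref{4.10}, rescaling shows $\norm{e^{-\lambda}N(\phi)}_* \le C\norm{\phi}_\infty^2 \le C\mu^2 e^{-\lambda}$. Applying $\norm{T(\cdot)}_\infty\le C\norm{\cdot}_*$ from Proposition \ref{cr4.7}, we get $\norm{\mathcal{A}(\phi)}_\infty \le C(\lambda e^{-\lambda} + e^{-\lambda/2} + \mu^2 e^{-\lambda}) \le \mu e^{-\lambda/2}$ once $\mu$ is fixed large enough and $\lambda\ge\lambda_0$.

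For the contraction estimate I would take $\phi_1,\phi_2\in\mathcal{B}_\mu$ and bound $\norm{\mathcal{A}(\phi_1)-\mathcal{A}(\phi_2)}_\infty \le C\norm{e^{-\lambda}(N(\phi_1)-N(\phi_2))}_*$; writing $N(\phi_1)-N(\phi_2)$ and using that the integrand is $C^1$ in $\phi$ with derivative vanishing at $\phi=0$ to first order, one gets $|N(\phi_1)-N(\phi_2)|\le C e^{w_\lambda}(\norm{\phi_1}_\infty+\norm{\phi_2}_\infty)\norm{\phi_1-\phi_2}_\infty$, hence $\norm{\mathcal{A}(\phi_1)-\mathcal{A}(\phi_2)}_\infty \le C\mu e^{-\lambda/2}\norm{\phi_1-\phi_2}_\infty \le \tfrac12\norm{\phi_1-\phi_2}_\infty$ for $\lambda$ large. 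The Banach fixed point theorem then yields the unique $\phi\in\mathcal{B}_\mu$, and the associated constants $c_0,c_1,c_2$ are determined as in Proposition \ref{cr4.7}; uniqueness in the larger class $\mathcal{C}_*$ follows from Proposition \ref{cr4.7} applied to the difference of two solutions. The even symmetry of $\phi$ is automatic since $S_\rho(w_\lambda)$ is evenly symmetric (Lemma \ref{le4.3}), $N$ preserves even symmetry, and $T$ maps into $L^\infty_e(T_\lambda)$. The main obstacle I anticipate is purely bookkeeping: carefully tracking how the pointwise bounds of Lemma \ref{le4.3} and the bubble bound \eqref{4.10} convert into the weighted $\norm{\cdot}_*$ norm after the $e^{-\lambda/2}$ rescaling of coordinates, making sure the $\lambda e^{-\lambda}$ constant term and the neck-region contributions are genuinely controlled by the $e^{-\lambda}$ piece of the weight; none of this is deep, but it is exactly where \cite[Lemma 3.3, Lemma 4.4]{cgh} did the work, so I would follow that reference for the details.
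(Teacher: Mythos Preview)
Your proposal is correct and follows essentially the same approach as the paper: rewrite \eqref{4.24} as the fixed-point equation $\phi=T\bigl(-e^{-\lambda}[S_\rho(w_\lambda)+N(\phi)]\bigr)$ using the linear solution operator $T$ from Proposition~\ref{cr4.7}, then show this is a contraction on the ball $\{\phi\in\mathcal{C}_\ast:\phi\perp e^{w_\lambda},\ \norm{\phi}_\infty\le C e^{-\lambda/2}\}$ by combining the error bound of Lemma~\ref{le4.3} with the quadratic smallness of $N(\phi)$. The paper's proof is terser but identical in structure, deferring the norm bookkeeping to \cite{cgh} just as you do.
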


\begin{proof}
Let $T:\mathcal{C}\to L^{\infty}_e(T_{\lambda})$ be the continuous linear map defined after Proposition~\ref{cr4.7}. Then, we rewrite \eqref{4.24} as
$$
	\phi=A(\phi):=T\left( -e^{-\lambda}\left[S_{\rho}(w_{\lambda})+N(\phi)\right] \right)
$$
in the subspace $\mathcal{F}=\{ \phi\in\mathcal{C}_{\ast} \ \vert \ \phi\perp e^{w_\lambda}, \, \norm{\phi}_{\infty}\leq C e^{-\frac{\lambda}{2}} \}$. By using Proposition~\ref{cr4.7} and the estimate of the error $S_{\rho}(w_{\lambda})$ in Lemma \ref{le4.3} it is not difficult to show that the operator $A$ is a contraction map in the space $\mathcal{F}$. Thus, there exists a unique fixed point which is a solution to \eqref{4.24}.
\end{proof}

\medskip

To conclude and get a solution to \eqref{4.21} we are left with showing that $c_0=c_1=c_2=0$ in \eqref{4.24}. To this end, we have the following properties.
\begin{lemma} \label{lem.propr}
Let $\phi$ and $c_i$, $i=0,1,2$ be given as in Lemma \ref{lem.inter}. Then, it holds:
\begin{itemize}
\item[(1)] $c_1=c_2$ and $c_0 = -2 e^{-\lambda}\mathcal{A}c_1$, where $\mathcal{A}=\int_{\mathbb{R}^2}\tilde{\eta}_{R_1}\psi_0(z)dz$.

\item[(2)] $\norm{\frac{\partial \phi}{\partial \lambda}}_{\infty}\leq C e^{-\frac{\lambda}{2}}$.
\end{itemize}
\end{lemma}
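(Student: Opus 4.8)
The plan is to establish both claims by testing equation \eqref{4.24} against the approximate kernel elements $\tilde{\eta}_{R_1,p'_i}\psi_{i,k}$ and against the constant function, and then exploiting the evenly symmetric setting together with the geometric symmetry of the two blow-up points. For claim (1), I would multiply \eqref{4.24} by $\tilde{\eta}_{R_3',p'_i}\psi_{i,k}$ for each $i=1,2$ and $k=1,2$, integrate over $T_\lambda$, and use the fact that $\phi\in\mathcal{C}_{\ast}$ is orthogonal to all $\psi_{i,k}\tilde{\eta}_{R_1,p'_i}$ and that $L$ is, up to small errors, self-adjoint with approximate kernel spanned by the $\psi_{i,k}$; the dominant contributions of $\langle L(\phi),\psi_{i,k}\tilde\eta\rangle$ and of $\langle e^{-\lambda}S_\rho(w_\lambda),\psi_{i,k}\tilde\eta\rangle$ vanish by the oddness of $\psi_{i,1},\psi_{i,2}$ paired against the evenly symmetric functions $S_\rho(w_\lambda)$ (Lemma~\ref{le4.3}) and $N(\phi)$ (which is evenly symmetric since $w_\lambda$ and $\phi$ are). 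This forces the coefficients multiplying $c_1$ and $c_2$ against these translation-type test functions to produce the relation, but more directly: since $S_\rho(w_\lambda)+N(\phi)$ is evenly symmetric and the whole construction is symmetric under the half-period translation $x\mapsto x+p_2$ (which swaps $p_1\leftrightarrow p_2$), the solution $\phi$ and the numbers $c_i$ inherit this symmetry, giving $c_1=c_2$. The identity $c_0=-2e^{-\lambda}\mathcal{A}c_1$ then follows by integrating \eqref{4.24} over $T_\lambda$: the left-hand side $\int_{T_\lambda}L(\phi)$ and the term $\int_{T_\lambda}e^{-\lambda}[S_\rho(w_\lambda)+N(\phi)]$ are both $O(e^{-\lambda})$ smaller or vanish by the structure of $\mathcal{L}$ and the normalization $\int_T e^{w_\lambda}\phi=0$, so $c_0\,|T_\lambda| + c_1\sum_{i=1}^2\int_{T_\lambda}\tilde\eta_{R_1,p'_i}\psi_{i,0}=0$; using $|T_\lambda|=e^{\lambda}$ (since $|T|=1$) and $\int_{T_\lambda}\tilde\eta_{R_1,p'_i}\psi_{i,0}\to\mathcal{A}$ yields the stated formula.

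For claim (2), the estimate $\norm{\partial\phi/\partial\lambda}_\infty\leq Ce^{-\lambda/2}$, I would differentiate the fixed-point identity $\phi=A(\phi)=T(-e^{-\lambda}[S_\rho(w_\lambda)+N(\phi)])$ with respect to $\lambda$. Formally,
\begin{equation*}
\frac{\partial\phi}{\partial\lambda}=\frac{\partial T}{\partial\lambda}\bigl(-e^{-\lambda}[S_\rho(w_\lambda)+N(\phi)]\bigr)+T\Bigl(\frac{\partial}{\partial\lambda}\bigl(-e^{-\lambda}[S_\rho(w_\lambda)+N(\phi)]\bigr)\Bigr),
\end{equation*}
and the $\lambda$-derivative of the source term splits into a piece of size $O(e^{-\lambda}\lambda)$ coming from $\partial_\lambda(e^{-\lambda}S_\rho(w_\lambda))$ (differentiating the explicit estimates in Lemma~\ref{le4.3} and Lemma~\ref{le4.1}), a piece $e^{-\lambda}N'(\phi)[\partial_\lambda\phi]$ which is higher order because $N'(\phi)=O(\norm{\phi}_\infty)=O(e^{-\lambda/2})$, and a term from differentiating the cutoffs and the change of the domain $T_\lambda$. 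Applying the uniform bound $\norm{T(\cdot)}_\infty\leq C\norm{\cdot}_\ast$ from Proposition~\ref{cr4.7}, together with control on $\partial_\lambda T$ (which requires redoing the a priori estimate of Lemma~\ref{le4.6} for the differentiated equation, again uniformly in $\lambda$), one absorbs the $N'(\phi)[\partial_\lambda\phi]$ term into the left-hand side for $\lambda$ large and obtains $\norm{\partial_\lambda\phi}_\infty\leq Ce^{-\lambda/2}$. The mild subtlety that $T_\lambda$ itself depends on $\lambda$ is handled exactly as in the rescaling conventions already fixed above, absorbing the scaling factors into the $\ast$-norm.

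The main obstacle I expect is making the differentiation in $\lambda$ rigorous: one must show $\lambda\mapsto\phi$ is genuinely $C^1$ into $L^\infty_e(T_\lambda)$ and that the operator $T$ depends differentiably on $\lambda$ with the derivative bounded uniformly. This is a standard but technical application of the implicit function theorem to the fixed-point equation $\phi=A(\phi)$, using that $I-A'(\phi)$ is invertible with uniformly bounded inverse (which is exactly the content of the contraction estimate in Lemma~\ref{lem.inter}), but one has to track how the $\ast$- and $\infty$-norms and the approximate kernel functions $\tilde\eta_{R_1,p'_i}\psi_{i,k}$ vary with $\lambda$. Since the analogous statement is proved in \cite[Corollary~4.3 and its consequences]{cgh} and \cite[Theorem~2.1]{gh}, I would carry out the argument in parallel, noting only the changes needed to pass from the square torus to a general flat torus and to incorporate the even symmetry, and refer to those papers for the routine estimates.
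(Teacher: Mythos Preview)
Your proposal is essentially correct and matches the paper's approach. For (1), the paper obtains $c_1=c_2$ precisely from the invariance of \eqref{4.24} under $z\mapsto z+p'_2$ (your ``more directly'' argument), and then integrates \eqref{4.24} over $T_\lambda$ to get $\mathcal{A}(c_1+c_2)+e^{\lambda}c_0=0$; for (2), the paper likewise differentiates \eqref{4.24} in $\lambda$, writes $L(\partial_\lambda\phi)=\tilde h+\sum_i\tilde c_i\tilde\eta_{R_1,p'_i}\psi_{i,0}$, and applies Proposition~\ref{pr4.5}, deferring the computations to \cite[Lemma~5.3]{cgh}.

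Two small points worth tightening. First, your opening idea in (1) of testing against the translation-type functions $\tilde\eta_{R_3',p'_i}\psi_{i,k}$ for $k=1,2$ yields nothing: by oddness $\langle\tilde\eta\psi_{i,0},\tilde\eta\psi_{i,k}\rangle=\langle 1,\tilde\eta\psi_{i,k}\rangle=0$, so the $c_i$'s drop out entirely and you just recover the tautology $0=0$; the half-period translation symmetry is the whole argument, not merely a shortcut. Second, in the integration step the terms $\int_{T_\lambda}L(\phi)$ and $\int_{T_\lambda}e^{-\lambda}[S_\rho(w_\lambda)+N(\phi)]$ vanish \emph{exactly}, not just up to lower order: $\int\Delta\phi=0$ by periodicity, $\int e^{w_\lambda}\phi=0$ by the imposed orthogonality, $\int S_\rho(w_\lambda)=0$ by its definition, and $\int N(\phi)=0$ by a short direct computation from \eqref{4.23}; this gives the clean identity \eqref{4.26} rather than an approximate one.
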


\begin{proof}
(1) From the invariance of problem (\ref{4.24}) by the change $z\mapsto z+p'_2$ we have
$$
 \left\langle L(\phi), \tilde{\eta}_{R_1,p'_1}\psi_{1,0} \right\rangle=	\left\langle L(\phi), \tilde{\eta}_{R_1,p'_2}\psi_{2,0} \right\rangle.
$$
Moreover, integrating the equation in (\ref{4.24}) on $T_{\lambda}$, we have
\begin{equation}
\label{4.26}
 \mathcal{A}(c_1+c_2)+e^{\lambda}c_0=0
\end{equation}
and (1) holds true.

\medskip

\noindent (2) The idea is to differentiate \eqref{4.24} with respect to $\lambda$, write
$$
	L\left(\frac{\partial \phi}{\partial \lambda}\right)=\tilde h+\sum\limits_{i=1}^2 \tilde c_i \tilde{\eta}_{R_1,p'_i}\psi_{i,0},
$$
for some suitable $\tilde h$, $\tilde c_i$ and finally exploit Proposition \ref{pr4.5}. We can follow the same computations as in \cite[Lemma 5.3]{cgh} so we omit the details.
\end{proof}

\medskip

Moreover, we have the following property concerning the energy functional $J_\rho$ defined in \eqref{4.13}.
\begin{lemma}
\label{le4.8}
Let $\phi$ be given as in Lemma \ref{lem.inter}. Then, $J_{\rho}(w_{\lambda}+\phi)$ is a $C^1$ function with respect to $\lambda$ for $\lambda\in(\lambda_1,\lambda_2)$ and hence it has a local maximum point $\lambda_{\ast}$. Furthermore, we have
\begin{equation*}
\epsilon=\left(32\pi+o(1)\right)\lambda e^{-\lambda},
\end{equation*}
as $\epsilon\rightarrow 0$.
\end{lemma}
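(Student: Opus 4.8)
The plan is to analyze the reduced energy $j(\lambda):=J_\rho(w_\lambda+\phi)$ on the interval $(\lambda_1,\lambda_2)$ and show that its derivative vanishes somewhere, at which point the corresponding $\lambda_*$ gives a genuine solution. First I would record that by Lemma~\ref{lem.inter} we have $\|\phi\|_\infty\le Ce^{-\lambda/2}$ and by Lemma~\ref{lem.propr}(2) that $\|\partial_\lambda\phi\|_\infty\le Ce^{-\lambda/2}$, so $\phi$ and its $\lambda$-derivative are uniformly small; combined with the smoothness of $w_\lambda$ in $\lambda$ this shows $j\in C^1(\lambda_1,\lambda_2)$. Next I would expand $j(\lambda)=J_\rho(w_\lambda)+O(\text{quadratic in }\phi\text{ and the error } S_\rho(w_\lambda))$: using $S'_\rho(w_\lambda)(\phi)=\mathcal{L}(\phi-\tfrac{\int e^{w_\lambda}\phi}{\int e^{w_\lambda}})$ and the orthogonality $\phi\perp e^{w_\lambda}$ together with the decomposition \eqref{4.24}, the first-order term in $\phi$ is controlled by $\sum_i|c_i|\,\|\phi\|_\infty+\|S_\rho(w_\lambda)\|_*\|\phi\|_\infty$, which by Lemma~\ref{le4.3} and $|c_i|\le C\|S_\rho(w_\lambda)\|_*$ is $O(\lambda^2 e^{-2\lambda})$, hence negligible compared with the terms we care about. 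Therefore $j(\lambda)=J_\rho(w_\lambda)+o(\lambda e^{-\lambda})$ and, by Lemma~\ref{le4.4},
\[
j(\lambda)= C_0 - \epsilon\lambda - 32\pi\lambda e^{-\lambda} - \epsilon\,C_1 + o(\lambda e^{-\lambda}),
\]
where $C_0=-64\pi^2[R(0)+G(p_1-p_2)]-16\pi\log(2\pi)-16\pi$ and $C_1=2\log(2\pi)-8\pi R(0)-8\pi G(p_1-p_2)$ are constants independent of $\lambda$.

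Then I would differentiate in $\lambda$, being careful that the $o(\lambda e^{-\lambda})$ remainder is actually $C^1$-small (this is where Lemma~\ref{lem.propr}(2) is essential, since it lets me differentiate the $\phi$-dependent remainder term by term and keep it of lower order). One gets
\[
j'(\lambda)= -\epsilon - 32\pi(1-\lambda)e^{-\lambda} + o(\lambda e^{-\lambda}) = -\epsilon + 32\pi\lambda e^{-\lambda}\big(1+o(1)\big),
\]
using $(1-\lambda)e^{-\lambda}=-\lambda e^{-\lambda}(1+o(1))$ as $\lambda\to\infty$. Now I invoke the bracketing \eqref{4.2}: at the left endpoint $\lambda=\lambda_1(\epsilon)$ we have $16\pi\lambda_1 e^{-\lambda_1}=\epsilon$, so $j'(\lambda_1)=-\epsilon+2\epsilon(1+o(1))>0$; at the right endpoint $\lambda=\lambda_2(\epsilon)$ we have $64\pi\lambda_2 e^{-\lambda_2}=\epsilon$, so $j'(\lambda_2)=-\epsilon+\tfrac{\epsilon}{2}(1+o(1))<0$. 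By the intermediate value theorem $j'$ vanishes at some interior $\lambda_*\in(\lambda_1,\lambda_2)$, which we may take to be a local maximum; at $\lambda_*$ the equation $j'(\lambda_*)=0$ reads $\epsilon = 32\pi\lambda_* e^{-\lambda_*}(1+o(1)) = (32\pi+o(1))\lambda_* e^{-\lambda_*}$, which is the claimed asymptotic relation.

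The main obstacle is the justification that differentiating the reduced energy in $\lambda$ commutes with the error estimates — i.e. that the $o(\lambda e^{-\lambda})$ in the expansion of $j$ survives differentiation as $o(\lambda e^{-\lambda})$ rather than degrading. This is exactly the content of Lemma~\ref{lem.propr}(2) and the variational identity that $\partial_\lambda J_\rho(w_\lambda+\phi) = \langle S_\rho(w_\lambda+\phi),\,\partial_\lambda(w_\lambda+\phi)\rangle = \langle\, \sum_i c_i\tilde\eta_{R_1,p_i'}\psi_{i,0}+c_0 ,\,\partial_\lambda(w_\lambda+\phi)\rangle$ (up to the $e^{-\lambda}$ scaling), so that $j'(\lambda)$ is again controlled by the multipliers $c_i$ and the explicitly computable $\partial_\lambda J_\rho(w_\lambda)$; the delicate point is checking that the leading nonconstant term $-32\pi\lambda e^{-\lambda}$ in $J_\rho(w_\lambda)$ indeed differentiates to something of order $\lambda e^{-\lambda}$ with the correct sign after cancellation of the $\epsilon\lambda$ term. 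Once this bookkeeping is in place — following \cite[Lemma~3.4, Lemma~5.3]{cgh} closely — the endpoint sign computation and the intermediate value argument are routine, and we conclude. Finally, any critical point $\lambda_*$ of $j$ yields, by the standard Lyapunov--Schmidt argument together with Lemma~\ref{lem.propr}(1) (which forces $c_1=c_2$ and $c_0=-2e^{-\lambda}\mathcal{A}c_1$, so that a single scalar condition suffices), a genuine solution $w_{\lambda_*}+\phi$ of $S_\rho(u)=0$, completing the proof of Theorem~\ref{th1}.
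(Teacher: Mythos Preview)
Your approach is essentially the same as the paper's: show that $J_\rho(w_\lambda+\phi)=J_\rho(w_\lambda)+\text{(small)}$, plug in the expansion of Lemma~\ref{le4.4}, and read off the interior critical point. The paper obtains the remainder by the mean value form $J_\rho(w_\lambda+\phi)-J_\rho(w_\lambda)=\langle S_\rho(w_\lambda+\theta\phi),\phi\rangle$ and estimates this directly to $O(e^{-\lambda})$, while you decompose via the intermediate problem \eqref{4.24} and get $o(\lambda e^{-\lambda})$; both are adequate. Your explicit sign check of $j'$ at $\lambda_1,\lambda_2$ and the intermediate value argument is exactly what the paper hides behind ``follows then easily.'' One small note: your final paragraph (showing $c_0=c_1=c_2=0$ and concluding Theorem~\ref{th1}) lies outside the statement of Lemma~\ref{le4.8} and is handled separately in the paper's proof of Theorem~\ref{th1}.
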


\begin{proof}
We first make the following expansion
$$
J_{\rho}(w_{\lambda}+\phi)=J_{\rho}(w_{\lambda})+\langle S_{\rho}(w_{\lambda}+\theta\phi),\phi  \rangle,
$$
for some $\theta\in(0,1)$. Then,
$$
	S_{\rho}(w_{\lambda}+\theta\phi)=S_{\rho}(w_{\lambda})+\theta\Delta\phi+O(e^{-\frac{\lambda}{2}}e^{w_\lambda})
$$
Exploiting $\norm{\phi}_{\infty}\leq Ce^{-\frac{\lambda}{2}}$, the estimate of the error $S_{\rho}(w_{\lambda})$ in Lemma \ref{le4.3} and reasoning as in \cite[Lemma 6.1]{cgh} for the term $\Delta\phi$ it is easy to show that
$$
J_{\rho}(w_{\lambda}+\phi)=J_{\rho}(w_{\lambda})+O(e^{-\lambda}).
$$
Then, by letter estimate and by Lemma \ref{le4.4} we have
\begin{equation*}
\begin{aligned}
J_{\rho}(w_{\lambda})=~&-64\pi^2 [R(0) + G(p_1-p_2)] - 16\pi\log{(2\pi)} - 16\pi - \epsilon\lambda\\
&-32\pi\lambda e^{-\lambda} - \epsilon \left[2\log{(2\pi)} - 8\pi R(0) - 8\pi G(p_1-p_2)\right] + O(e^{-\lambda}).
\end{aligned}
\end{equation*}
The proof of Lemma \ref{le4.8} follows then easily.
\end{proof}

\medskip

Finally, we can prove now the main result of this section. It turns out that the scale $\lambda_*$ given by the Lemma \ref{le4.8} is the right choice to solve the problem \eqref{4.21}.

\medskip

\begin{proof}[Proof of Theorem \ref{th1}.]
We have to prove that for $\lambda=\lambda_{\ast}$, $c_0=c_1=c_2=0$ in (\ref{4.24}). Since $\lambda_*$ is a critical point of $J_\rho(w_\lambda+\phi)$ we have
$$
	0=\frac{\partial J_\rho(w_\lambda+\phi)}{\partial\lambda}\vert_{\lambda=\lambda_*}=\left\langle S_\rho(w_\lambda+\phi),\frac{\partial(w_\lambda+\phi)}{\partial\lambda} \right\rangle\vert_{\lambda=\lambda_*}.
$$
On the other hand, by using Lemma \ref{lem.propr} and by direct computations as in \cite[Lemma 6.3]{cgh} it is not difficult to get
$$
	\left\langle S_\rho(w_\lambda+\phi),\frac{\partial(w_\lambda+\phi)}{\partial\lambda} \right\rangle= \left(Ce^{\frac{\lambda}{2}}+O(1)\right)c_1
$$
and hence it follows $c_0=c_1=c_2=0$. Thus,
$$
S_{\rho}(w_{\lambda_*}+\phi_*)=0
$$
and $w_{\lambda_*}+\phi_*$ is the desired blowing-up solution to (\ref{1.1}).
\end{proof}

\vspace{0.25cm}

\end{document}